\def\VR{\kern-\arraycolsep\rule[-.3\baselineskip]{0pt}{\baselineskip}\vrule &\kern-\arraycolsep}
\def\vr{\kern-\arraycolsep & \kern-\arraycolsep}
\theoremstyle{plain}
\newtheorem{lema}{Lemma}[section]
\newtheorem{prop}[lema]{Proposition}
\newtheorem{teo}[lema]{Theorem}
\newtheorem{conj}[lema]{Conjecture}
\theoremstyle{remark}
\newtheorem{obs}[lema]{Remark}
\theoremstyle{definition}
\newtheorem{defi}[lema]{Definition}
\newtheorem{ej}[lema]{Example}
\newcommand{\p}{\mathcal{P}}
\newcommand{\len}{\textrm{l}}
\newcommand{\occ}{\textrm{occ}}
\def\Z{\mathbb{Z}}
\def\R{\mathbb{R}}
\begin{document}

\title[A new test for asphericity and diagrammatic reducibility]{A new test for asphericity and diagrammatic reducibility of group presentations}

\author[J.A. Barmak]{Jonathan Ariel Barmak}
\author[E.G. Minian]{Elias Gabriel Minian}

\address{Departamento  de Matem\'atica--IMAS\\
 FCEyN, Universidad de Buenos Aires\\ Buenos
Aires, Argentina}

\email{jbarmak@dm.uba.ar}
\email{gminian@dm.uba.ar}

 \thanks{Researchers of CONICET. Partially supported by grants ANPCyT PICT-2011-0812, CONICET PIP 112-201101-00746 and UBACyT 20020130100369.}

\begin{abstract}
We present a new test for studying asphericity and diagrammatic reducibility of group presentations. Our test can be applied to prove diagrammatic reducibility in cases where the classical weight test fails. We use this criterion to generalize results of J. Howie and S.M. Gersten on asphericity of LOTs and of Adian presentations, and derive new results on solvability of equations over groups.  We also use our methods to investigate a conjecture of S.V. Ivanov related to Kaplansky's problem on zero divisors: we strengthen Ivanov's result for locally indicable groups and prove a weak version of the conjecture.
\end{abstract}

\subjclass[2010]{57M20, 20F05, 20F06, 57M05}

\keywords{Asphericity, DR presentations, labeled oriented trees, weight test, locally indicable groups, Adian presentations.}

\maketitle

\section{Introduction}

The notion of asphericity is central to topology, geometry and algebra. Recall that a path-connected space $X$ is called aspherical if its homotopy groups $\pi_n(X)$ are trivial for  $n\geq 2$. A longstanding open problem in low dimensional topology is the Whitehead asphericity conjecture, which  asserts that a path-connected subcomplex of an aspherical $2$-complex is itself aspherical \cite{bo,Si2,wh}. There have been considerable advances in this question which include works of Cockcroft \cite{co}, Adams \cite{ad} and Howie \cite{Ho2,Ho3}. A closely related unsolved problem is whether ribbon disc complements are necessarily aspherical \cite{Ho4,Ho5}. This question was shown to be equivalent to whether complexes associated to \emph{labeled oriented trees} (LOTs) are aspherical. LOT-complexes are combinatorially encoded $2$-complexes which naturally appear as spines of ribbon disc complements \cite{Ho4}. A proof of the asphericity of ribbon disc complements would generalize, in some sense, Papakyriakopoulos' analogous result for knot complements. It is easy to see that every LOT-complex embeds in a contractible $2$-complex. Therefore LOT-complexes are considered test cases for the Whitehead conjecture. The concept of asphericity plays a key role in combinatorial group theory (see for example \cite{cch, Ger, Ger4, Ho2, Ho4, Iva, ls}). In this context, a group presentation is called aspherical if its standard $2$-complex is aspherical. In geometry, asphericity is fundamental in the study of manifolds and it is related to the theory of hyperbolic groups (see \cite{blw,gr,lu}).

The concept of \emph{diagrammatic reducibility}, first studied by Sieradski \cite{Si1} and Gersten \cite{Ger,Ger2}, is related to asphericity. Any diagrammatically reducible (DR) presentation is aspherical. On the other hand diagrammatic reducibility is intimately connected with the problem of solving equations over groups. The Kervaire-Laudenbach-Howie conjecture asserts that any independent system of equations over a group $H$ has a solution in an overgroup of $H$ \cite{Ho1,Ger2,Rou}. Howie showed that the conjecture is true for locally indicable groups. Gersten proved that if a presentation $\p$ is DR then all systems of equations over an arbitrary group $H$ modeled on $\p$  are solvable in an overgroup of $H$ \cite{Ger2} (see also \cite{br,br2,kr} for more results in these directions). Gersten also used the concept of diagrammatic reducibility to study subgroups of hyperbolic groups \cite{Ger3}.

One of the best-known tests for asphericity and diagrammatic reducibility of $2$-complexes (or group presentations) is Gersten's weight test \cite{Ger,Ger2} which is a generalization of Sieradski's coloring test \cite{Si1}. Gersten's weight test was further developed by Pride \cite{pr} and generalized by Huck and Rosebrock \cite{HR}. All these variants of the weight test are based on the combinatorial Gauss-Bonnet theorem.
Howie attacked asphericity problems with a different approach, using the notion of local indicability. Recall that a group $G$ is \emph{indicable} if it admits a nontrivial homomorphism to the infinite cyclic group, and it is \emph{locally indicable} if all its nontrivial finitely generated subgroups are indicable. Howie showed that a connected $2$-complex with locally indicable fundamental group and trivial second homology group is aspherical \cite{Ho2}. He also used the concept of local indicability to deduce that \emph{reducible} presentations with no proper powers are aspherical and to prove asphericity of certain classes of LOT-complexes \cite{Ho4}. In \cite{Ho5} he showed that for $n\geq 3$, ribbon $n$-knots in $S^{n+2}$ admit minimal Seifert manifolds provided their associated LOTs have diameter at most $3$.

In this paper we present a new test for studying asphericity and diagrammatic reducibility  of group presentations. The I-test provides a criterion for deciding when a presentation $\p$ of an indicable group $G$ is DR. Our test is based on a description of the second homotopy group of the $2$-complex $K_{\p}$ associated to $\p$ as the kernel of the boundary map $C_2(\widetilde{K}_{\p})\to C_1(\widetilde{K}_{\p})$ in the cellular chain complex of the universal cover of $K_{\p}$, and it uses basic linear algebra methods. We compare our test with the different variations of the weight test and use our methods to prove asphericity and diagrammatic reducibility in cases where known tests fail.

We use our test to obtain generalizations of results of Gersten and Howie on Adian presentations and LOTs \cite{Ger,Ho4}. We introduce the notions of \emph{deforestable} and \emph{weakly deforestable} labeled oriented graph (LOG) and show that their associated presentations are diagrammatically reducible. These classes strictly contain those LOGs $\Gamma$ such that either the initial graph $I(\Gamma)$ or the terminal graph $T(\Gamma)$ has no cycles (cf. \cite{Ho4}).

We also apply our methods to investigate a conjecture of Ivanov related to Kaplansky problem on zero divisors \cite{Iva}. Suppose $\p$ is a finite presentation and $\mathcal{Q}$ is obtained from $\p$ by adding one generator $x$ and one relator $r$. Ivanov conjectured that, under certain hypotheses, the asphericity of $\p$ implies that of $\mathcal{Q}$. Ivanov showed that a counterexample to this conjecture would provide a torsion-free group $G$ whose integral group ring $\Z G$ contains zero divisors. He proved that the conjecture holds in certain cases, including the case where the group $H$ presented by $\p$ is locally indicable. This follows from results of Howie. In fact under the hypotheses of the conjecture it is immediate that the locally indicability of $H$ implies that of $G$. We prove that the conjecture remains valid if one only requires that some specific (finitely many) subgroups of $G$, determined by the relator $r$, be indicable (see Theorem \ref{ivanovli} below). In Theorem \ref{ivanovp} we show that the conjecture is true if we are allowed to \emph{perturb} the relation $r$.

In the last section of the article we apply our test to derive new results on solvability of equations over groups. We concentrate on the existence of solutions of one equation with many variables. Our results in this direction provide a partial complement to an old result of Brick \cite{br} and recent results of Klyachko and Thom \cite{kt}.

\section{The I-test} \label{seccionppal}

Let $\p=\langle x_1, x_2, \ldots, x_n | r_1, r_2, \ldots ,r_m\rangle$ be a presentation of a group $G$, where $\mathcal{A}=\{x_1,x_2, \ldots, x_n\}$ is an alphabet and the relators $r_j$ are non-necessarily reduced words in $\mathcal{A}\cup \mathcal{A}^{-1}$. If $w$ is a word in $\mathcal{A}\cup \mathcal{A}^{-1}$, $w$ will also denote the corresponding element in the free group $F(x_1,x_2,\ldots , x_n)$ with basis $\mathcal{A}$ and the element $p(w)\in G$, where $p:F(x_1,x_2,\ldots ,x_n)\to G$ is the quotient map. In particular if $g\in G$, $gw$ denotes the element $g.p(w)\in G$. The second homotopy group $\pi_2(K_\p)$ of the associated $2$-complex is isomorphic to the second homology group $H_2(\widetilde{K}_\p)$ of the universal cover of $K_{\p}$. The complex $\widetilde{K}_\p$ has one $0$-cell $e^0_g$ for each element $g\in G$. For each $1\le i\le n$, there is an oriented $1$-cell $e^1_{i,g}$ from $e^0_g$ to $e^0_{gx_i}$. For each $g\in G$ and $1\le j\le m$ there is an oriented $2$-cell $e^2_{j,g}$. If $x_i$ is a letter of $r_j$, say $r_j=wx_iw'$ for certain words $w,w'$, then $e^1_{i,gw}$ is a face of $e^2_{j,g}$ with incidence $1$. If $r_j=wx_i^{-1}w'$, then $e^1_{i,gwx_i^{-1}}$ is a face of $e^2_{j,g}$ with incidence $-1$. This yields a description of $\pi_2 (K_\p)$ that goes back to Reidemeister and Whitehead (see \cite[pp. 84]{Si2}), as the kernel of the boundary map $\partial :C_2(\widetilde{K}_\p)\to C_1(\widetilde{K}_\p)$ in the cellular chain complex. Theorem \ref{ecuaciones} below summarizes these ideas giving equations for $\pi_2$ which can be read off from the presentation. 
If $w=x_{i_1}^{\epsilon_1}x_{i_2}^{\epsilon_2} \ldots x_{i_l}^{\epsilon_l}$ is a (non-necessarily reduced) word in $\mathcal{A}\cup \mathcal{A}^{-1}$, where $\epsilon_{k,w}=\epsilon_k=\pm 1$ for each $1\le k \le l$, the length $\len(w)=l$ of $w$ is the number of letters in $w$. Given $1\le k\le \len(w)$, denote by $w^{(k)}$ the subword of $w$ obtained by removing the first $k-1$ letters of $w$. That is, $w^{(k)}=x_{i_k}^{\epsilon_k}x_{i_{k+1}}^{\epsilon_{k+1}}\ldots x_{i_l}^{\epsilon_{l}}$. For $1\le k\le \len(w)$ define $s(k,w)=w^{(k)}$ if $\epsilon_k=1$, and $s(k,w)=w^{(k+1)}$ if $\epsilon_k=-1$. Let $1\le i\le n$. Denote by $\occ(x_i,w)=\{1\le k\le l \ | \ x_{i_k}=x_i\}$ the set of positions in which the letter $x_i$ (or its inverse $x_i^{-1}$) occurs in $w$.

\begin{teo} \label{ecuaciones}
Let $\p=\langle x_1, x_2, \ldots , x_n \ | \ r_1, r_2, \ldots, r_m \rangle$ be a presentation of a group $G$. For each $1\le i\le n$ and each $g\in G$ consider the integral linear equation $E_{i,g}$: $$\sum\limits_{1\le j\le m} \sum\limits_{k\in \occ(x_i, r_j)} \epsilon_{k,r_j}n^j_{gs(k,r_j)}=0 $$ in the unknowns $n^j_h$, for $1\le j\le m$, $h\in G$.

Then $K_{\p}$ is aspherical if and only if the unique solution of the system $\{E_{i,g}\}_{i,g}$ with finitely many nontrivial $n^j_g$ is the trivial solution, $n^j_g=0$ for each $1\le j\le m, g\in G$.  
\end{teo}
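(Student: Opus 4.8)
The plan is to identify $\pi_2(K_\p)$ with the kernel of the cellular boundary map $\partial\colon C_2(\widetilde K_\p)\to C_1(\widetilde K_\p)$, translate that kernel condition into the system $\{E_{i,g}\}$, and invoke the standard fact that a connected $2$-complex is aspherical if and only if its universal cover has trivial $\pi_2$, equivalently trivial $H_2$, equivalently $\ker\partial = 0$. First I would fix the basis of $C_2(\widetilde K_\p)$ given by the $2$-cells $e^2_{j,g}$ and the basis of $C_1(\widetilde K_\p)$ given by the $1$-cells $e^1_{i,h}$. An arbitrary chain $c = \sum_{j,g} n^j_g\, e^2_{j,g}$ lies in $C_2(\widetilde K_\p)$ (a genuine, finitely supported chain, not a formal infinite sum) precisely when only finitely many $n^j_g$ are nonzero; this is why the theorem restricts to solutions with finitely many nontrivial entries. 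Then $c\in\ker\partial$ iff for every $1$-cell $e^1_{i,h}$ the coefficient of $e^1_{i,h}$ in $\partial c$ vanishes.

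The heart of the argument is the bookkeeping computing that coefficient. For a fixed relator $r_j=x_{i_1}^{\epsilon_1}\cdots x_{i_l}^{\epsilon_l}$ and fixed $g\in G$, the description of the attaching map recalled before the theorem says: if $k\in\occ(x_i,r_j)$ and $\epsilon_k=1$, writing $r_j = w x_i w'$ with $w = x_{i_1}^{\epsilon_1}\cdots x_{i_{k-1}}^{\epsilon_{k-1}}$, the cell $e^1_{i,gw}$ appears in $\partial e^2_{j,g}$ with incidence $+1$; if $\epsilon_k=-1$, writing $r_j = w x_i^{-1} w'$, the cell $e^1_{i,gwx_i^{-1}}$ appears with incidence $-1$. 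The key observation is that in both cases the $0$-cell at the \emph{start} of the relevant $1$-cell is $e^0_{g\cdot s(k,r_j)^{-1}\cdots}$ — more precisely, one checks that the source vertex of the $1$-cell contributed by position $k$ is $e^0_{g u}$ where $u$ is the prefix of $r_j$ of length $k-1$ (resp. length $k$) according to $\epsilon_k=1$ (resp. $\epsilon_k=-1$), and that $s(k,r_j)$ is exactly the word such that this source vertex equals $e^0_{h}$ with $h\cdot s(k,r_j) \cdots$ — this is precisely the definition of $s(k,w)$ engineered so that the $1$-cell from position $k$ of $r_j$ at copy $g$ is $e^1_{i, g\, ( r_j\text{-prefix})}$ and the matching condition for this cell to be $e^1_{i,h}$ becomes $g\, s(k,r_j)^{-1}\cdots$. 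Carrying this through, the coefficient of $e^1_{i,h}$ in $\partial c$ equals $\sum_{j}\sum_{k\in\occ(x_i,r_j),\ h = g s(k,r_j)\text{ for the relevant }g} \epsilon_{k,r_j} n^j_{g}$, and reindexing by $g$ so that $h = g s(k,r_j)$, i.e. $g = h s(k,r_j)^{-1}$, rewrites this as the left-hand side of $E_{i,h}$ after the substitution. Thus $\partial c = 0$ is literally the conjunction of the equations $E_{i,h}$ over all $i$ and $h$, and $c$ has finite support iff the $n^j_g$ are finitely supported.

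With this identification in hand, the proof concludes formally: $K_\p$ is aspherical $\iff$ $\pi_n(K_\p)=0$ for $n\ge 2$ $\iff$ $\pi_2(\widetilde K_\p)=0$ (using that higher homotopy of $K_\p$ agrees with that of its universal cover and that a $2$-complex with $\pi_2=0$ is aspherical, as the universal cover is then contractible or at least weakly contractible by Hurewicz) $\iff$ $H_2(\widetilde K_\p)=0$ (Hurewicz, since $\widetilde K_\p$ is simply connected) $\iff$ $\ker\partial = 0$ (since $C_3(\widetilde K_\p)=0$, so $H_2 = \ker\partial$) $\iff$ the system $\{E_{i,g}\}$ has only the trivial finitely-supported solution. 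I expect the main obstacle to be purely notational: getting the index substitution $g\mapsto h s(k,r_j)^{-1}$ exactly right, and verifying in each of the two sign cases ($\epsilon_k=\pm1$) that the source vertex of the contributed $1$-cell is the one encoded by $s(k,r_j)$, so that the coefficient collected at $e^1_{i,h}$ matches $E_{i,h}$ term by term with the correct signs $\epsilon_{k,r_j}$. There is no serious topological or algebraic difficulty beyond the classical Reidemeister–Whitehead description already cited; the content is entirely in setting up the indexing so that the combinatorics of the attaching maps becomes the stated linear system.
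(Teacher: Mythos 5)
Your overall route is exactly the paper's: the paper gives no separate proof of Theorem \ref{ecuaciones}, but records it as a restatement of the Reidemeister--Whitehead description of $\pi_2(K_\p)$ as $\ker\bigl(\partial\colon C_2(\widetilde{K}_\p)\to C_1(\widetilde{K}_\p)\bigr)$, combined with the standard equivalences (asphericity of a $2$-complex $\iff$ $\pi_2=0$ $\iff$ $H_2(\widetilde{K}_\p)=0$ $\iff$ $\ker\partial=0$, via Hurewicz and the absence of $3$-cells) and the fact that cellular chains are finitely supported; your proposal reproduces all of this. One bookkeeping point, however: your final reindexing is inverted and contradicts the (correct) sentence just before it. For $\epsilon_{k,r_j}=1$ write $r_j=wx_iw'$ with $\len(w)=k-1$; since $w\,s(k,r_j)=r_j=1$ in $G$, one has $w=s(k,r_j)^{-1}$ in $G$, so the $1$-cell contributed by position $k$ of $e^2_{j,g}$ is $e^1_{i,gw}=e^1_{i,\,g s(k,r_j)^{-1}}$ (and likewise $gwx_i^{-1}=g\,s(k,r_j)^{-1}$ when $\epsilon_{k,r_j}=-1$). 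Hence the condition for this cell to equal $e^1_{i,h}$ is $h=g\,s(k,r_j)^{-1}$, i.e. $g=h\,s(k,r_j)$, and collecting coefficients of $e^1_{i,h}$ gives $\sum_{j}\sum_{k\in\occ(x_i,r_j)}\epsilon_{k,r_j}\,n^j_{h\,s(k,r_j)}$, which is literally the left-hand side of $E_{i,h}$ (this is also what Remark \ref{terminos} records). As written, your substitution ``$h=g\,s(k,r_j)$, i.e. $g=h\,s(k,r_j)^{-1}$'' would instead produce terms $n^j_{h\,s(k,r_j)^{-1}}$, which is not $E_{i,h}$. This is a purely notational slip rather than a conceptual gap, but the displayed identification should be corrected.
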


\begin{obs} \label{terminos}
A term $n^j_h$ appears in the equation $E_{i,g}$ if and only if $e^1_{i,g}$ is a face of $e^2_{j,h}$.
\end{obs}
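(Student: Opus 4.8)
The plan is to show that the two sides of the stated equivalence are, occurrence by occurrence, the same condition on the positions of $x_i$ inside $r_j$, the only substantive point being a translation between a prefix of $r_j$ and the inverse of its complementary suffix, valid precisely because $r_j$ is a relator (i.e. $r_j=1$ in $G$).

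First I would unwind the left-hand side. By the definition of $E_{i,g}$, the variable $n^j_h$ occurs among its summands exactly when there is a position $k\in\occ(x_i,r_j)$ with $h=g\,s(k,r_j)$. Next I would unwind the right-hand side directly from the cellular description of $\widetilde{K}_\p$ recalled before Theorem \ref{ecuaciones}. Fix $k\in\occ(x_i,r_j)$ and write $r_j=w\,x_i^{\epsilon_k}\,w'$, where $w$ is the prefix of $r_j$ of length $k-1$ and $w'=r_j^{(k+1)}$ is the complementary suffix. According to the attaching rule, the $k$-th letter contributes to $\partial e^2_{j,h}$ the face $e^1_{i,h\sigma_k}$ with incidence $\epsilon_k$, where $\sigma_k=w$ if $\epsilon_k=1$ and $\sigma_k=wx_i^{-1}$ if $\epsilon_k=-1$. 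Hence $e^1_{i,g}$ is a face of $e^2_{j,h}$ if and only if there is a $k\in\occ(x_i,r_j)$ with $g=h\sigma_k$.

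The heart of the matter is the identity $s(k,r_j)=\sigma_k^{-1}$ in $G$, which I would verify case by case using $r_j=1$ in $G$. If $\epsilon_k=1$, then $s(k,r_j)=r_j^{(k)}=x_i w'$ while $\sigma_k=w$; since $w\,x_i\,w'=r_j=1$ in $G$, we get $x_i w'=w^{-1}=\sigma_k^{-1}$. If $\epsilon_k=-1$, then $s(k,r_j)=r_j^{(k+1)}=w'$ while $\sigma_k=wx_i^{-1}$; since $w\,x_i^{-1}\,w'=r_j=1$ in $G$, we get $w'=(wx_i^{-1})^{-1}=\sigma_k^{-1}$. In either case $g\,s(k,r_j)=g\sigma_k^{-1}$, so for each fixed $k\in\occ(x_i,r_j)$ the equation-side condition $h=g\,s(k,r_j)$ is literally the geometry-side condition $g=h\sigma_k$. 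Taking the disjunction over $k\in\occ(x_i,r_j)$ yields the asserted equivalence: $n^j_h$ appears in $E_{i,g}$ if and only if $e^1_{i,g}$ is a face of $e^2_{j,h}$.

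I expect the only real obstacle to be bookkeeping: keeping straight that $s(k,\cdot)$ records a \emph{suffix} of the relator while the attaching map records a \emph{prefix}, and recognizing that these agree only after passing to $G$ --- it is exactly here that the hypothesis that $r_j$ is a relator (not merely a word) is used. As a byproduct the computation gives a position-preserving, sign-preserving bijection between the summands of $E_{i,g}$ carrying $n^j_h$ and the appearances of $e^1_{i,g}$ among the faces of $e^2_{j,h}$; this is the precise sense in which $E_{i,g}$ expresses the vanishing of the coefficient of $e^1_{i,g}$ in $\partial$ applied to a generic $2$-chain, reconciling the statement with the identification of $\pi_2(K_\p)$ as $\ker\partial$ underlying Theorem \ref{ecuaciones}.
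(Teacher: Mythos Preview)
Your argument is correct; the paper itself offers no proof of this remark, treating it as an immediate consequence of the definitions of $E_{i,g}$ and of the attaching maps in $\widetilde{K}_\p$. Your unwinding of both sides and the key observation that $s(k,r_j)=\sigma_k^{-1}$ in $G$ (using $r_j=1$) is exactly the computation one must do to make the remark precise.
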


\begin{ej} \label{ej1}
Let $\mathcal{P}=\langle x,y,z,w | x^2y^2z^2, xyx^{-1}zyz^{-1}, w^2x^{-1}w^{-1}z\rangle$. The equations associated to the generators $x,y,z,w$ are respectively 
\medskip

$n^1_{gx^2y^2z^2}+n^1_{gxy^2z^2}+n^2_{gxyx^{-1}zyz^{-1}}-n^2_{gzyz^{-1}}-n^3_{gw^{-1}z}=0,$

$n^1_{gy^2z^2}+n^1_{gyz^2}+n^2_{gyx^{-1}zyz^{-1}}+n^2_{gyz^{-1}}=0,$

$n^1_{gz^2}+n^1_{gz}+n^2_{gzyz^{-1}}-n^2_g+n^3_{gz}=0,$

$n^3_{gw^2x^{-1}w^{-1}z}+n^3_{gwx^{-1}w^{-1}z}-n^3_{gz}=0.$
\end{ej}

Let $G$ be a finitely presented group. Recall that $G$ is said to be indicable if there exists an epimorphism from $G$ onto the infinite cyclic group $\mathbb{Z}$. We denote by $G^{ab}=G/[G,G]$ the abelianization of $G$. Let $\mathcal{P}=\langle x_1, x_2, \ldots , x_n | r_1, r_2, \ldots, r_m \rangle$ be a presentation of $G$. Denote by $q:F(x_1, x_2, \ldots , x_n)\to F(x_1,x_2,\ldots ,x_n)^{ab}=\Z^n$ the quotient map. We identify $q(x_i)\in F(x_1,x_2,\ldots ,x_n)^{ab}$ with the $i$-th vector of the standard basis of $\Z^n$.

\begin{lema}
$G$ is indicable if and only if there is a nonzero vector $v\in \R^n$ orthogonal to each $q(r_j)$.
\end{lema}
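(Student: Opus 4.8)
The plan is to reduce the statement to linear algebra over $\mathbb{Z}$ by passing to the abelianization. Since $\mathbb{Z}$ is abelian, every homomorphism $G\to\mathbb{Z}$ factors through $G^{ab}=\mathbb{Z}^n/\langle q(r_1),\ldots,q(r_m)\rangle$, and conversely every homomorphism $G^{ab}\to\mathbb{Z}$ lifts to $G$. Hence $G$ is indicable if and only if there is an epimorphism $G^{ab}\to\mathbb{Z}$, which for a finitely generated abelian group is equivalent to its free rank being at least $1$. All the work is then in identifying this condition with the existence of the vector $v$.

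For the forward implication I would take an epimorphism $\phi\colon G\to\mathbb{Z}$, precompose with the quotient $p\colon F(x_1,\ldots,x_n)\to G$, and use that the resulting homomorphism $F\to\mathbb{Z}$ factors through $q$ as a homomorphism $\mathbb{Z}^n\to\mathbb{Z}$. Such a homomorphism is given by pairing with an integer vector $v$; it is nonzero because $\phi$ is onto, and $\langle v,q(r_j)\rangle=\phi(p(r_j))=\phi(1)=0$ for each $j$, since $r_j$ is trivial in $G$. Thus $v$ is the required nonzero vector orthogonal to every $q(r_j)$ (indeed one with integer coordinates).

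For the converse, suppose $v\in\mathbb{R}^n\setminus\{0\}$ is orthogonal to every $q(r_j)$. The equations $\langle v,q(r_j)\rangle=0$ form a homogeneous linear system with integer coefficients, so having a nonzero real solution it has a nonzero rational, hence a nonzero integer, solution; after dividing by the gcd of its coordinates we obtain a primitive integer vector $w\in\mathbb{Z}^n$ with $\langle w,q(r_j)\rangle=0$ for all $j$. Define $\psi\colon F(x_1,\ldots,x_n)\to\mathbb{Z}$ by $x_i\mapsto w_i$; it factors through $\mathbb{Z}^n$ as pairing with $w$, which is surjective because $w$ is primitive, and $\psi(r_j)=\langle w,q(r_j)\rangle=0$, so $\psi$ vanishes on the normal closure of $\{r_1,\ldots,r_m\}$ and descends to an epimorphism $G\to\mathbb{Z}$. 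Hence $G$ is indicable.

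There is no real obstacle; the only points needing care are that homomorphisms to $\mathbb{Z}$ see only the abelianization, that an orthogonality condition with integer coefficients which has a nonzero real solution has a nonzero integer solution, and that one must primitivize the resulting vector to land onto all of $\mathbb{Z}$ rather than a proper subgroup. Equivalently, one can package the argument by observing that the free rank of $G^{ab}$ equals $n$ minus the rank of the integer matrix with rows $q(r_j)$, and this rank is strictly less than $n$ exactly when those rows fail to span $\mathbb{R}^n$, i.e. when their orthogonal complement is nonzero.
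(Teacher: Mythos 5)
Your proof is correct, and its overall strategy coincides with the paper's: both directions are reduced to the abelianization, and the forward implication (epimorphism $G\to\Z$ induces $G^{ab}\to\Z$, which factors as pairing with a nonzero integer vector killing each $q(r_j)$) is the same argument as in the paper. The two arguments part ways only in the converse. The paper takes the real vector $v$ at face value: pairing with $v$ descends to a nontrivial homomorphism $f:G^{ab}\to\R$, and since $f(G^{ab})$ is a nontrivial finitely generated torsion-free abelian group it is isomorphic to $\Z^r$ with $r\ge 1$, whence $G$ is indicable. You instead exploit that the orthogonality conditions $\langle v,q(r_j)\rangle=0$ form an integer linear system, so a nonzero real solution forces a nonzero rational, hence a primitive integer, solution $w$, and pairing with $w$ directly gives an epimorphism $G\to\Z$. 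Both routes are sound; yours avoids invoking the structure theory of finitely generated torsion-free abelian groups at the cost of the rationality-of-solution-spaces and primitivization steps, and it has the small bonus of producing an explicit integral character, while the paper's version is shorter once one is willing to quote that a nontrivial finitely generated subgroup of $\R$ is free abelian of positive rank.
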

\begin{proof}
An epimorphism $G\to \Z$ induces an epimorphism $G^{ab}\to \Z$. Since $G^{ab}$ is isomorphic to $\Z^n/\langle q(r_j)\rangle_j$, there exists an epimorphism $\Z^n\to \Z$ which is zero in each $q(r_j)$. A homomorphism $\Z^n \to \Z$ is the multiplication by a vector $v\in \Z^n$. Conversely, a nonzero vector $v\in \R^n$ orthogonal to each $q(r_j)$ induces a nontrivial homomorphism $f:G^{ab}\to \R$. Since $f(G^{ab})$ is a nontrivial finitely generated torsion free abelian group, it is isomorphic to $\Z^r$ for some $r\ge 1$, and then $G$ is indicable.   
\end{proof}

Assume that $G$ is indicable and let $v\in \R^n$ be a vector orthogonal to each $q(r_j)$. Given $w\in F(x_1, x_2, \ldots, x_n)$, its \textit{weight} (relative to $v$) is $\overline{w}=\langle q(w), v\rangle \in \R$, where $\langle,\rangle$ denotes the standard inner product of $\R^n$. If $g\in G$ is represented by an element $w\in F(x_1, x_2, \ldots, x_m)$, we define its weight as $\overline{g}=\overline{w}$. Note that the definition does not depend on the representative. We define the \textit{weight matrix} $M$ of $\p$ as follows. It is an $n\times m$ matrix whose entries are families of real numbers. The family $M_{i,j}$ is

$$M_{i,j}=\{\overline{s(k,r_j)}\}_{k\in \occ(x_i,r_j)}$$

In other words, $M_{i,j}$ contains the weights of the subindices $g$ of the unknowns $n^j_g$ which appear in the equation $E_{i,1}$ (corresponding to the generator $x_i$ and the trivial element $1\in G$) in the statement of Theorem \ref{ecuaciones}. Of course, $M$ depends on the vector $v$. It is important to note that a number can appear in $M_{i,j}$ with multiplicity and that $M_{i,j}$ is empty if $x_i$ does not occur in $r_j$.

\begin{defi} \label{defiasph}
An $n\times m$ matrix $M$ whose entries are (possibly empty) families of real numbers will be called \textit{good} if $n\ge m$ and there exists an ordering $j_1, j_2, \ldots , j_m$ of the columns of $M$ and an ordering $i_1, i_2, \ldots , i_m$ of $m$ of the rows, such that for each $1\le k\le m$

\begin{enumerate}
\item $M_{i_k,j_k}$ is non-empty,
\item the maximum $\lambda_k$ of $M_{i_k,j_k}$ is the maximum of the whole row $\bigcup\limits_{j=1}^m M_{i_k,j}$ and
\item the multiplicity of $\lambda_k$ in $\bigcup\limits_{l=k}^m M_{i_k,j_l}$ is one.
\end{enumerate}

A presentation $\mathcal{P}=\langle x_1, x_2, \ldots ,x_n |r_1, r_2, \ldots, r_m \rangle$ satisfies the \textit{I-test} if there exists a vector $v\in \R^n$, orthogonal to each $q(r_j)$ such that the corresponding weight matrix is good. In this case we say that $\p$ satisfies the I-test for $v$, and for the orderings $j_1, j_2, \ldots , j_m$ and $i_1, i_2, \ldots , i_m$ that make $M$ good.
\end{defi}

In Example \ref{ej1} above, the vector $v=(1,0,-1,2)$ is orthogonal to $q(r_1)=(2,2,2,0)$, $q(r_2)=(0,2,0,0)$ and $q(r_3)=(-1,0,1,1)$. The weight matrix $M$ is

\begin{displaymath}\bordermatrix{&\phantom{-}r_1& & \phantom{-}r_2 & & \phantom{-}r_3 \cr 
        x& \phantom{-}0,-1 &  & \phantom{-}0,0 & & \phantom{0,-1,} -3  \cr
        y& -2,-2 &  & -1,1  & & \phantom{0,-2,-} \emptyset \cr 
        z& -2,-1 &  & \phantom{-}0,0 & & \phantom{0,-2,} -1 \cr
        w& \phantom{-2,-}\emptyset & & \phantom{-0,} \emptyset & & 0,-2,-1}
\end{displaymath}

$\mathcal{P}$ satisfies the I-test with respect to the orders $2,3,1$ of the columns and $2,4,1$ of the rows: the maximum of the second row is $1\in M_{2,2}$ and has multiplicity one in $M_{2,2}\cup M_{2,3}\cup M_{2,1}=\{-2,-2,-1,1\}$, the maximum of the fourth row is $0\in M_{4,3}$ and has multiplicity one in $M_{4,3}\cup M_{4,1}=\{-2,-1,0\}$, the maximum of the first row is $0\in M_{1,1}$ and has multiplicity one in $M_{1,1}=\{-1,0\}$.

\begin{teo} \label{main}
If $\mathcal{P}$ satisfies the I-test, it is aspherical.
\end{teo}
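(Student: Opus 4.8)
The plan is to derive the theorem from Theorem~\ref{ecuaciones}: it suffices to show that the system $\{E_{i,g}\}_{i,g}$ has no nontrivial solution with finitely many nonzero $n^j_g$. First I would fix a vector $v$ orthogonal to each $q(r_j)$ together with orderings $j_1,\dots,j_m$ of the columns and $i_1,\dots,i_m$ of $m$ rows that make the weight matrix $M$ good, and let $\overline{\,\cdot\,}$ denote the weight relative to $v$. The key reading of conditions (1)--(3) of Definition~\ref{defiasph} is the following: for each $1\le k\le m$ there is a unique position $\kappa_k\in\occ(x_{i_k},r_{j_k})$ with $\overline{s(\kappa_k,r_{j_k})}=\lambda_k$; the value $\lambda_k$ is $\ge$ every element of every family $M_{i_k,j}$; and $\lambda_k$ appears in none of the families $M_{i_k,j_l}$ with $l>k$.

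Next I would introduce a strict partial order $\prec$ on $\{1,\dots,m\}\times G$ by declaring $(j',g')\prec(j,g)$ when either $\overline{g'}<\overline{g}$, or $\overline{g'}=\overline{g}$ and $j'$ precedes $j$ in the list $j_1,\dots,j_m$. Suppose, towards a contradiction, that $(n^j_g)$ is a finitely supported solution with support $S=\{(j,g):n^j_g\ne 0\}\ne\emptyset$. Since $S$ is finite and $\prec$ is a strict partial order, $S$ contains a $\prec$-minimal element, say $(j_k,h)$.

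The heart of the argument is to evaluate the equation $E_{i_k,g}$ at the element $g\in G$ determined by $g\,s(\kappa_k,r_{j_k})=h$. Its summand with $j=j_k$ and inner index $\kappa_k$ is exactly $\epsilon_{\kappa_k,r_{j_k}}\,n^{j_k}_h$ with $\epsilon_{\kappa_k,r_{j_k}}=\pm 1$; moreover this is the only summand involving the unknown $n^{j_k}_h$, since any other position $k'\in\occ(x_{i_k},r_{j_k})$ with $g\,s(k',r_{j_k})=h$ in $G$ would force $\overline{s(k',r_{j_k})}=\lambda_k$, against the uniqueness of $\kappa_k$. For any remaining summand $\epsilon_{k',r_j}\,n^{j}_{g\,s(k',r_j)}$ one computes $\overline{g\,s(k',r_j)}=\overline{h}-\lambda_k+\overline{s(k',r_j)}$, and since $\overline{s(k',r_j)}$ belongs to $M_{i_k,j}$ it is $\le\lambda_k$, so $\overline{g\,s(k',r_j)}\le\overline{h}$; equality forces $\overline{s(k',r_j)}=\lambda_k$, which (by uniqueness of $\kappa_k$ and by $\lambda_k\notin M_{i_k,j_l}$ for $l>k$) can occur only for $j=j_l$ with $l<k$. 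In every case $(j,g\,s(k',r_j))\prec(j_k,h)$, so by minimality of $(j_k,h)$ in $S$ each such summand vanishes. Hence $E_{i_k,g}$ collapses to $\pm\,n^{j_k}_h=0$, contradicting $(j_k,h)\in S$; therefore $S=\emptyset$ and $K_{\mathcal P}$ is aspherical by Theorem~\ref{ecuaciones}.

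I expect the one genuinely delicate point to be the correct choice of ``peak''. A naive argument that inspects a single equation at an unknown of maximal weight does not close, since every $E_{i,g}$ drags along lower-weight terms one cannot kill. Passing instead to a $\prec$-minimal element of the support is exactly what makes the asymmetry of conditions (2) and (3) bite: the row maximum $\lambda_k$ being attained precisely once among $M_{i_k,j_k},\dots,M_{i_k,j_m}$ is what guarantees that every other term of the chosen equation lies strictly $\prec$-below the distinguished unknown. The remaining points --- well-definedness of $\overline{\,\cdot\,}$ on $G$ (which uses $v\perp q(r_j)$) and the bookkeeping that the coefficient of $n^{j_k}_h$ is a unit --- are routine.
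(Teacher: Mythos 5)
Your proof is correct and follows essentially the same route as the paper: pick a nonzero coefficient whose subindex has minimal weight and, among those, earliest column in the chosen ordering, then evaluate the equation $E_{i_t,g}$ at $g=hs(\kappa,r_{j_t})^{-1}$ and use conditions (2) and (3) to kill every other summand. Your lexicographic order $\prec$ is just a repackaging of the paper's two-step minimization (first the weight $\alpha$, then the column index $t$), and your explicit remark that the distinguished unknown occurs only once in that equation is a point the paper handles implicitly via the multiplicity-one condition.
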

\begin{proof}
Suppose $\mathcal{P}$ satisfies the I-test and let $v$, $j_1, j_2, \ldots , j_m$ and $i_1, i_2, \ldots, i_m$ be as in the definition. If $\p$ is not aspherical, there is a nontrivial family of integer numbers $\{n^j_g\}_{j,g}$ with finite support, which is a solution to the equations $E_{i,g}$. Let $\alpha=\min \{ \overline{h} \ | \ h\in G$ and there exists $1\le j\le m$ such that $n^j_h\neq 0 \}$. Let $t=\min \{1\le l\le m \ | \ $ there exists $h\in G$ with $n^{j_l}_h\neq 0$ and $\overline{h}=\alpha \}$ and let $h\in G$ be such that $n_h^{j_t}\neq 0$ and $\overline{h}=\alpha$. The equation $E_{i_t,g}$ is 

$$\sum\limits_{1\le j\le m}\sum\limits_{k\in \occ(x_{i_t},r_j)} \epsilon_{k,r_j}n^j_{gs(k,r_j)}=0.$$

The maximum of $M_{i_t,j_t}$ is $\lambda=\overline{s(k,r_{j_t})}$ for certain $k\in \occ(x_{i_t},r_{j_t})$. Let $g=hs(k,r_{j_t})^{-1}$. Then, the term $n_h^{j_t}$ appears in the equation $E_{i_t, g}$. Since $n_h^{j_t}\neq 0$, there must be a second nonzero term in the equation. However, $\alpha=\overline{h}=\overline{g}+\lambda$ is the maximum among the weights of all the subindices in $E_{i_t,g}$. Therefore, any nonzero term must have subindex of weight $\alpha$. This cannot happen for terms with superindex $j_l$ and $l<t$ by definition of $t$ and this cannot happen for the other terms either, by the hypothesis on the multiplicity of $\lambda$ in $\bigcup\limits_{l=t}^m M_{i_t,j_l}$. 
\end{proof}

In the proof of Theorem \ref{main} we have only used part of the information given by the equations $E_{i,g}$. Namely, that if the sum of the terms in each equation is zero, then they are all zero or there are at least two nonzero terms. Therefore we have proved that the I-test guarantees that $\p$ satisfies the following property (p): If $\{n^j_g\}_{j,g}$ is a family of integer numbers with finite support, and in each equation $E_{i,g}$ all the terms are zero or there are at least two nonzero terms (we do not actually need the family to satisfy the equations), then all the $n^j_g$ must be trivial.

We will see that property (p) is equivalent to the notion of diagrammatic reducibility. Recall that a combinatorial 2-complex $K$ (for instance, the complex associated with a presentation) is said to be diagrammatically reducible (DR) if for each cell structure $C$ on $S^2$ and each combinatorial map $f:C\to K$, there exist distinct 2-cells $e^2,\widetilde{e}^2$ of $C$ with a common 1-face $e^1$ and a homeomorphism $h:e^2\to \widetilde{e}^2$ fixing $e^1$ pointwise such that $f|_{\widetilde{e}^2}h=f|_{e^2}$. We will say that the presentation $\p$ is DR if its standard complex is DR. This notion is stronger than asphericity. For basic definitions and applications of diagrammatic reducibility see \cite{Ger, Ger2}. In any case, for our purposes only the following characterization will be needed. This result, due to Corson and Trace, appeared in \cite[Theorem 2.4]{CT}. 

\begin{teo}[Corson-Trace] \label{corson}
A combinatorial $2$-complex $K$ is DR if and only if every finite subcomplex of the universal cover $\widetilde{K}$ collapses to a $1$-dimensional complex.
\end{teo}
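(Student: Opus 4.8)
The plan is to prove the two implications separately, the direction ``$K$ DR $\Rightarrow$ collapsibility'' in contrapositive form.

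Assume first that every finite subcomplex of $\widetilde K$ collapses to a $1$-complex, and let $f\colon C\to K$ be a combinatorial map with $C$ a cell structure on $S^2$; I must find a folding pair. As $C$ is simply connected, $f$ lifts to a combinatorial map $\widetilde f\colon C\to \widetilde K$, and $L:=\widetilde f(C)$ is a finite subcomplex of $\widetilde K$ which has a $2$-cell because $C$ does. By hypothesis $L$ collapses to a $1$-complex, hence it has a free $1$-face: a $1$-cell $e^1$ occurring in the boundary of exactly one $2$-cell $e^2$ of $L$, and there with multiplicity one. Pick a $1$-cell $\sigma^1$ of $C$ with $\widetilde f(\sigma^1)=e^1$. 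In the closed surface $C$ the cell $\sigma^1$ has exactly two local sides, each contained in a $2$-cell of $C$, and $\widetilde f$ carries each of these $2$-cells onto a $2$-cell of $L$ having $e^1$ as a face, hence onto $e^2$. The two $2$-cells must be distinct: if a single $2$-cell $\sigma^2$ lay on both sides of $\sigma^1$, then $\sigma^1$ would occur twice in $\partial\sigma^2$, and since $\widetilde f$ maps $\sigma^2$ homeomorphically onto $e^2$ the cell $e^1$ would occur twice in $\partial e^2$, against freeness. So we obtain distinct $2$-cells $\sigma^2_1\neq\sigma^2_2$ of $C$ sharing the face $\sigma^1$ with $\widetilde f(\sigma^2_1)=\widetilde f(\sigma^2_2)=e^2$. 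Then $h:=(\widetilde f|_{\sigma^2_2})^{-1}\circ(\widetilde f|_{\sigma^2_1})$ is a homeomorphism $\sigma^2_1\to\sigma^2_2$ which one checks restricts to the identity on $\sigma^1$, and, after composing with the covering projection $\widetilde K\to K$, satisfies $f|_{\sigma^2_2}\circ h=f|_{\sigma^2_1}$. Hence $(\sigma^2_1,\sigma^2_2)$ is a folding pair and $f$ is reducible; since $f$ was arbitrary, $K$ is DR.

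For the converse, suppose some finite subcomplex of $\widetilde K$ fails to collapse to a $1$-complex; the goal is to manufacture a reduced spherical diagram over $K$. Among all such subcomplexes choose one, $L$, with the fewest $2$-cells. An elementary collapse stays inside $\widetilde K$ and lowers the number of $2$-cells, so minimality forces $L$ to have no free $1$-face; and $L$ has a $2$-cell, as otherwise it is already $1$-dimensional. Therefore every $1$-cell of $L$ lying on a $2$-cell lies, counted with multiplicity, on at least two $2$-cells. From $L$ I would assemble a combinatorial diagram on a closed surface as follows: take two abstract copies of each $2$-cell of $L$ as oriented disks, and for every $1$-cell $e^1$ of $L$ pair off the (now even number of) boundary occurrences of $e^1$ among these disks and glue the corresponding edges by the identity on $e^1$ — the absence of free faces is exactly what allows such a pairing. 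This produces a compact surface $\Sigma$ without boundary together with a combinatorial map $\Sigma\to L\subseteq\widetilde K$, and hence, composing with $\widetilde K\to K$, a diagram $\Sigma\to K$. Because there are at least four occurrences of $e^1$ at each such edge, the pairings can moreover be arranged so that no two disks glued along an edge are mirror copies of the same $2$-cell of $L$ at the same boundary occurrence, which makes the diagram reduced. It remains to replace $\Sigma$ by a $2$-sphere. Here I would exploit that $\widetilde K$ is contractible (since $K$, being DR, is aspherical), so that $H_2(L)=0$ and indeed $L$ contains no closed surface; combined with the freedom in the number of copies and in the pairings, this should let one arrange the Euler characteristic of the diagram, and the manifold condition around the vertices, so that a component of $\Sigma$ is $S^2$. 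A reduced spherical diagram over $K$ contradicts diagrammatic reducibility, completing the contrapositive.

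The first implication is routine once the diagram has been lifted. The main obstacle I anticipate is the construction in the second: ensuring the assembled surface is simultaneously reduced, locally a manifold, and carries a spherical component. Pinning down the topological type of $\Sigma$ — forcing $S^2$ rather than a surface of higher genus or a non-orientable one — is the delicate point, and it is precisely there that the hypothesis that $L$ sits inside the simply connected complex $\widetilde K$ must be used in an essential way.
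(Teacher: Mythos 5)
The statement you were asked to prove is not proved in the paper at all: it is quoted with attribution from Corson and Trace \cite{CT}, so your attempt can only be judged on its own merits. Your first implication (if every finite subcomplex of $\widetilde{K}$ collapses to a $1$-complex then $K$ is DR) is the standard argument and is essentially correct: lift the spherical diagram, note the image $L$ is a finite subcomplex containing a $2$-cell, extract a $1$-cell of $L$ that is a face of exactly one $2$-cell with multiplicity one (here you should remark that the $(0,1)$-collapses occurring before the first $(1,2)$-collapse do not change the $1$--$2$ incidences, so such a free face exists in $L$ itself), and pull it back to a folding pair. The only soft spot is the formula $h=(\widetilde f|_{\sigma^2_2})^{-1}\circ(\widetilde f|_{\sigma^2_1})$: the restriction of $\widetilde f$ to a closed $2$-cell need not be injective on its boundary, so $h$ must be defined through characteristic maps, and it is the multiplicity-one hypothesis that lets you choose it to fix $\sigma^1$ pointwise. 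These are routine repairs.

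The converse, however, has a genuine gap, which you partly acknowledge. From the minimal non-collapsing subcomplex $L$ you retain only the property ``no free $1$-faces'', and that property alone can never produce a reduced spherical diagram: the standard torus complex $K_{\langle a,b\,|\,aba^{-1}b^{-1}\rangle}$ has no free edges, yet it satisfies Gersten's weight test and hence is DR, so it admits no reduced spherical diagram whatsoever. Consequently any correct proof must use, at the crucial moment, that $L$ sits inside the simply connected complex $\widetilde{K}$ --- and that is exactly the step you leave open. Your doubling-and-pairing construction yields a closed surface $\Sigma$ of uncontrolled genus and orientability, and a reduced diagram on such a surface does not contradict diagrammatic reducibility, which constrains spheres only. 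The appeal to $H_2(L)=0$ (which itself requires the separate fact that DR implies asphericity) does not control the topology of $\Sigma$, since $\Sigma$ maps to $L$ rather than embedding in it, and ``arranging the Euler characteristic'' by varying the number of copies and the pairings is asserted, not proved. A secondary issue: excluding only the mirror-copy-at-the-same-occurrence gluings does not yet give reducedness, since a fold can also arise between the two copies of a $2$-cell glued at different occurrences when the attaching word has a suitable symmetry. As it stands, the hard implication --- the actual content of Corson and Trace's theorem, whose proof does essential work with the simple connectivity of $\widetilde{K}$ (capping off or compressing while preserving reducedness and nontriviality) --- is not established by your argument.
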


To prove that $\p$ satisfies property (p) if and only if $K_\p$ is DR, we argue as follows. Suppose $K_\p$ is not DR. Then there exists a finite subcomplex $L$ of $\widetilde{K}_\p$ of dimension $2$ without free faces. We define a family $\{n^j_g\}_{j,g}$ associated to $L$. For each $1\le j\le m$ and $g\in G$ define $n^j_g=1$ if $e^2_{j,g}\in L$ and $n^j_g=0$ otherwise. Then $\{n^j_g\}_{j,g}$ is a nontrivial family with finite support which is not a solution to the system $\{E_{i,g}\}_{i,g}$, in general. However, since each $1$-cell $e^1_{i,g}$ is a proper face of $0$ or $\ge 2$ cells, by Remark \ref{terminos}, each equation $E_{i,g}$ has zero or at least two nonzero terms. Conversely, if $\{n^j_g\}_{j,g}$ is a nontrivial family with finite support and each equation $E_{i,g}$ has zero or at least two nonzero terms, then the subcomplex of $\widetilde{K}_\p$ generated by the cells $e^2_{j,g}$ such that $n^j_g\neq 0$, is finite, $2$-dimensional, and has no free faces. Then $K_\p$ is not DR. 
Therefore, we have proved

\begin{teo} \label{maindr}
If $\p$ satisfies the I-test, $K_\p$ is DR.
\end{teo}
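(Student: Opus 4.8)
The plan is to obtain the statement by isolating two facts, after which the conclusion is immediate: if $\p$ satisfies the I-test then it satisfies property (p), and property (p) is equivalent to $K_\p$ being DR. So the proof itself will be a two-line assembly; the content is in verifying these two facts, both of which are essentially contained in the discussion preceding the statement.

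\textbf{Step 1: the I-test implies property (p).} Here I would re-read the proof of Theorem \ref{main} and check that it never actually uses that the chosen family $\{n^j_g\}$ solves the system $\{E_{i,g}\}$. Starting from a nontrivial finite-support integer family, one picks the minimal weight $\alpha$ occurring among subindices of nonzero terms, the minimal column index $t$ (in the good ordering) attaining it, an index $h$ with $n^{j_t}_h\neq 0$ and $\overline{h}=\alpha$, and the equation $E_{i_t,g}$ with $g=hs(k,r_{j_t})^{-1}$, where $\lambda$ is the maximum of $M_{i_t,j_t}$. The term $n^{j_t}_h$ occurs in $E_{i_t,g}$, and the one and only moment where the equation itself is invoked is the assertion that, since this term is nonzero, there must be a \emph{second} nonzero term in $E_{i_t,g}$ --- which is precisely the hypothesis of property (p) (no equation has exactly one nonzero term). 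The rest of the argument (a second nonzero term would have subindex of weight $\alpha$, hence column index $\ge t$ by minimality of $t$, which is excluded by the multiplicity-one clause of goodness) goes through verbatim. So the I-test forces property (p).

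\textbf{Step 2: property (p) is equivalent to $K_\p$ being DR.} Here I would use the Corson--Trace criterion (Theorem \ref{corson}) together with the standard fact that a finite $2$-complex collapses onto a $1$-dimensional subcomplex if and only if it contains no $2$-dimensional subcomplex in which every $1$-cell is a face of at least two of its $2$-cells (nontrivial direction: collapse free $(1,2)$-pairs exhaustively; if a $2$-cell survives, each of its $1$-faces is a face of $\ge 2$ surviving $2$-cells, so the span of the surviving $2$-cells is such a subcomplex). Since $\widetilde{K}_\p$ is $2$-dimensional, Theorem \ref{corson} then reads: $K_\p$ is DR iff $\widetilde{K}_\p$ has no finite $2$-dimensional subcomplex without free $1$-faces. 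The bridge to property (p) is Remark \ref{terminos}: given a nontrivial finite-support family with each $E_{i,g}$ having $0$ or $\ge 2$ nonzero terms, let $L$ be the subcomplex of $\widetilde{K}_\p$ spanned by those $e^2_{j,g}$ with $n^j_g\neq 0$; it is finite and $2$-dimensional, and a $1$-cell $e^1_{i,g}$ of $L$ is a face of at least one such $2$-cell, hence --- by Remark \ref{terminos} and the hypothesis on $E_{i,g}$ --- of at least two, so $L$ has no free face. Conversely, the indicator family of the $2$-cells of a finite $2$-dimensional $L$ without free faces violates property (p) via the same dictionary. Thus property (p) holds iff $\widetilde{K}_\p$ admits no such $L$, i.e. iff $K_\p$ is DR; combining with Step 1 finishes the proof.

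The step I expect to be most delicate is Step 2 --- specifically, pinning down the passage between "does not collapse to dimension $1$" and "contains a $2$-dimensional subcomplex without free faces", and checking carefully that the subcomplex $L$ spanned by the cells with $n^j_g\neq 0$ genuinely has no free $1$-faces (the point being that every $1$-cell of $L$ is already a face of at least one of those $2$-cells, and then the hypothesis on $E_{i,g}$ upgrades "one" to "at least two"). Step 1, by contrast, is merely the observation that the proof of Theorem \ref{main} used strictly less than its stated hypothesis.
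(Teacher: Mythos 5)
Your proposal is correct and follows essentially the same route as the paper: you extract property (p) from the proof of Theorem \ref{main} exactly as the paper's discussion does, and you establish the equivalence of (p) with diagrammatic reducibility via the Corson--Trace criterion (Theorem \ref{corson}) and the dictionary of Remark \ref{terminos}, which is precisely the paper's argument (you even spell out the collapsing step the paper leaves implicit). The only caveat is that a $1$-cell traversed twice by the attaching map of a single surviving $2$-cell is not a free face even though it is a face of only one $2$-cell, so the correct formulation is ``no free faces'' rather than ``face of at least two $2$-cells''; this is glossed at the same level of precision in the paper and does not affect the conclusion, since such a $1$-cell still contributes at least two nonzero summands to the corresponding equation $E_{i,g}$.
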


Note that the I-test can only be applied to presentations $\p$ such that the presented group $G$ is indicable and the deficiency of $\p$ is non-negative, i.e. $n\ge m$. The positivity of the deficiency automatically implies indicability.

\begin{obs}
 Given a presentation $\p$, the set $V\subseteq \R^n$ of vectors $v$ orthogonal to every $q(r_j)$ is naturally described as the solution of a system of $m$ linear equations in $n$ variables. Given $v\in V$, it is easy to decide algorithmically whether $\p$ satisfies the I-test with respect to $v$. Moreover, if $\{v_1,v_2,\ldots, v_k\}$ is a basis of $V$, then $\p$ satisfies the I-test with respect to a vector $v=\lambda_1v_1+\lambda_2v_2+\ldots +\lambda_kv_k$ and orderings $j_1, j_2,\ldots, j_m$ and $i_1,i_2, \ldots, i_m$ of columns and rows if and only if certain linear inequalities on the $\lambda_i$ are satisfied. A system of equations of the form $\sum \lambda_i c_i \ge 0$ or $\sum \lambda_i c_i > 0$ has a solution if and only if the system obtained by replacing each $\sum \lambda_i c_i > 0$ by $\sum \lambda_i c_i \ge 1$ has a solution (if $\lambda=(\lambda_1, \lambda_2, \ldots, \lambda _k)\in \R^k$ solves the first one, for $N$ large enough $N\lambda$ solves the second one). The feasibility can be then decided with linear programming. Therefore it is algorithmically decidable whether a presentation satisfies the I-test for some vector $v$.      
\end{obs}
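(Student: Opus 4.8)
The plan is to reduce the existence of a suitable vector $v$ to the feasibility of finitely many systems of linear inequalities, and then to invoke the decidability of linear programming over the rationals.

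Since the vectors $q(r_j)$ have integer entries, the space $V=\{v\in\R^n : \langle q(r_j),v\rangle=0\ \text{for all}\ j\}$ is a rational subspace; I would first compute a rational basis $v_1,\dots,v_k$ of $V$ and write a general element of $V$ as $v=\lambda_1v_1+\cdots+\lambda_kv_k$. Then every weight that can occur in the weight matrix, namely $\overline{s(p,r_j)}=\langle q(s(p,r_j)),v\rangle$ for $p\in\occ(x_i,r_j)$, becomes a \emph{homogeneous} linear form $\ell_{i,j,p}(\lambda)=\sum_{t=1}^{k}\lambda_t\,\langle q(s(p,r_j)),v_t\rangle$ in $\lambda=(\lambda_1,\dots,\lambda_k)$ with integer coefficients, and all of these forms can be written down explicitly from $\p$.

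If $n<m$ one immediately outputs that $\p$ does not satisfy the I-test, so assume $n\ge m$. By Definition \ref{defiasph}, $M=M(v)$ is good for a given ordering $j_1,\dots,j_m$ of the columns and a given ordered choice $i_1,\dots,i_m$ of $m$ rows precisely when conditions (1)--(3) hold; to turn the ``maximum of the family $M_{i_k,j_k}$'' into a single linear form I would additionally fix, for each $k$, a position $p_k\in\occ(x_{i_k},r_{j_k})$ that is meant to realise that maximum. There are only finitely many such combinatorial choices (of the two orderings and of the $p_k$), and a choice of $p_k$ already encodes condition (1). For a fixed choice set $\ell_k:=\ell_{i_k,j_k,p_k}$; then condition (2) (that $\ell_k$ be the maximum of the whole row $i_k$, which in particular forces it to be the maximum of its own entry) is the conjunction of the non-strict inequalities $\ell_{i_k,j,p}(\lambda)\le\ell_k(\lambda)$ over all $j$ and all $p\in\occ(x_{i_k},r_j)$, while condition (3) (multiplicity one of $\ell_k$ among the columns $j_k,\dots,j_m$) is the conjunction of the \emph{strict} inequalities $\ell_{i_k,j_l,p}(\lambda)<\ell_k(\lambda)$ over all $l\ge k$ and all $p\in\occ(x_{i_k},r_{j_l})$ with $(l,p)\ne(k,p_k)$. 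There is no real difficulty beyond this translation, and it is also the one place one must be careful: ``multiplicity one'' among the later columns is equivalent to the designated occurrence \emph{strictly} exceeding every other occurrence in those columns — coincidences of weight, whether caused by equal subwords or by an accidental choice of $v$, must be excluded — whereas occurrences in the earlier columns $j_1,\dots,j_{k-1}$ are only required to be $\le\ell_k$; the rest is bookkeeping.

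Thus $\p$ satisfies the I-test if and only if, for at least one of the finitely many combinatorial choices, the associated finite system of homogeneous linear inequalities (some non-strict, some strict, integer coefficients) has a solution $\lambda\in\R^k$. Because all the inequalities are homogeneous, replacing each strict inequality $\ell(\lambda)>0$ by $\ell(\lambda)\ge1$ preserves feasibility: a solution of the relaxed system solves the strict one, and if $\lambda$ solves the strict one then $N\lambda$ solves the relaxed one for $N$ large enough. The relaxed system defines a rational polyhedron, and deciding whether it is nonempty (and, when it is, exhibiting a rational point) is a standard linear programming task, e.g.\ by Fourier--Motzkin elimination. Iterating over the finitely many combinatorial choices yields the desired decision procedure. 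If one additionally insists on $v\ne 0$ (the case relevant to indicability), one intersects each such polyhedron with one of the $2k$ open halfspaces $\{\lambda_t>0\}$ or $\{\lambda_t<0\}$, again obtaining finitely many LP feasibility problems.
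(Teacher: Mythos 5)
Your proposal is correct and follows essentially the same route as the paper's remark: express $v$ in a basis of the rational subspace $V$, translate goodness of the weight matrix (over the finitely many choices of orderings, plus your designated maximal occurrences $p_k$) into homogeneous linear inequalities in the $\lambda_t$, replace each strict inequality by $\ge 1$ using the scaling argument, and decide feasibility by linear programming. Your extra bookkeeping (fixing the positions $p_k$ and the explicit $\le$/$<$ split between earlier and later columns) is just a fleshed-out version of the paper's ``certain linear inequalities on the $\lambda_i$''.
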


If $\p$ satisfies the I-test for $v$ and orderings $j_1, j_2,\ldots, j_m$ and $i_1,i_2, \ldots, i_m$ of columns and rows, then any partial orderings $j'_1, j'_2,\ldots, j'_l$ and $i'_1,i'_2,\ldots, i'_l$ ($l\le m$) satisfying Definition \ref{defiasph} for $1\le k\le l$ can be extended to orderings $j'_1, j'_2,\ldots, j'_m$ and $i'_1,i'_2,\ldots, i'_m$ such that $\p$ satisfies the I-test for those orderings.

\begin{obs}
If $\p$ satisfies the I-test, then any subpresentation of $\p$ does. In particular the Whitehead conjecture is true for presentations satisfying the I-test. The later follows also from Theorem \ref{maindr}.
\end{obs}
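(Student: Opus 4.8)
The final statement asserts two things: first, that if $\p$ satisfies the I-test then so does any subpresentation, and second, that consequently the Whitehead conjecture holds for presentations satisfying the I-test.

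The plan is to prove the first assertion directly from Definition \ref{defiasph}. Let $\p'$ be a subpresentation of $\p$, obtained by deleting some relators $r_j$ (possibly deleting generators too, but deleting a generator forces deleting every relator in which it occurs, so it is cleanest to think of $\p'$ as having generators $x_1,\dots,x_n$ and a subset $R'\subseteq\{r_1,\dots,r_m\}$ of the relators; deleting unused generators only decreases $n$, which preserves $n\ge m'$). First I would fix a vector $v\in\R^n$ orthogonal to every $q(r_j)$ making the weight matrix $M$ of $\p$ good, together with the witnessing orderings $j_1,\dots,j_m$ of columns and $i_1,\dots,i_m$ of rows. The same $v$ is orthogonal to each $q(r_j)$ with $r_j\in R'$, so $v$ is admissible for the I-test of $\p'$, and the weight matrix $M'$ of $\p'$ is exactly the submatrix of $M$ on the columns indexed by $R'$ (the rows are unchanged, or a subset if generators are dropped). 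Now restrict the column ordering $j_1,\dots,j_m$ to those indices lying in $R'$, obtaining an ordering $j_{k_1},j_{k_2},\dots,j_{k_{m'}}$ with $k_1<k_2<\dots<k_{m'}$, and restrict the row ordering correspondingly to $i_{k_1},\dots,i_{k_{m'}}$.

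The key point is then to check conditions (1)--(3) of Definition \ref{defiasph} for $M'$ with these restricted orderings, and this follows because passing to a subpresentation only \emph{removes} columns, hence only removes elements from the unions appearing in conditions (2) and (3). Concretely, for the $s$-th pair $(i_{k_s},j_{k_s})$: condition (1) is inherited verbatim since $M'_{i_{k_s},j_{k_s}}=M_{i_{k_s},j_{k_s}}$ is non-empty. For condition (2), the maximum $\lambda_{k_s}$ of $M_{i_{k_s},j_{k_s}}$ is the maximum of the whole row $\bigcup_{j=1}^m M_{i_{k_s},j}$ in $M$; the row of $M'$ is a sub-union of this, still containing $M_{i_{k_s},j_{k_s}}$, so $\lambda_{k_s}$ remains the row-maximum. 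For condition (3), the multiplicity of $\lambda_{k_s}$ in $\bigcup_{l=s}^{m'} M'_{i_{k_s},j_{k_l}}$ is at most its multiplicity in $\bigcup_{l=k_s}^{m} M_{i_{k_s},j_l}$, because $\{j_{k_s},j_{k_{s+1}},\dots,j_{k_{m'}}\}\subseteq\{j_{k_s},j_{k_s+1},\dots,j_m\}$; since the latter multiplicity is one and $M'_{i_{k_s},j_{k_s}}$ still contributes one occurrence, the former is exactly one. Hence $M'$ is good and $\p'$ satisfies the I-test.

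The second assertion is then immediate. The Whitehead conjecture for a presentation $\p$ satisfying the I-test amounts to the statement that every subpresentation $\p'$ has aspherical standard complex; by the first part $\p'$ satisfies the I-test, and by Theorem \ref{main} it is aspherical. (Alternatively, as the remark notes, Theorem \ref{maindr} gives that $K_{\p'}$ is DR, and DR subcomplexes of a DR complex cause no issue — but the cleanest route is through the first part plus Theorem \ref{main}.) I do not anticipate a serious obstacle here: the only mild subtlety is bookkeeping the case where deleting relators also strands a generator, which one handles by observing that dropping an unused row from a good matrix, or decreasing $n$ while $n\ge m'$ still holds, preserves goodness trivially.
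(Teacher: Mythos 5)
Your argument is correct and is exactly the routine verification the paper leaves implicit in this remark (which it states without proof): keep the same vector $v$, restrict the column and row orderings to the retained relators, and observe that conditions (1)--(3) of Definition \ref{defiasph} only become easier since the row unions and the tail unions $\bigcup_{l\ge k} M_{i_k,j_l}$ shrink while still containing $M_{i_k,j_k}$. Your handling of dropped generators and of the Whitehead consequence via Theorem \ref{main} (or Theorem \ref{maindr}) matches the paper's intent, so there is nothing to add.
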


The following example shows a class of presentations which satisfy our test.
 
\begin{ej}\label{clasequecumple}
Let $n\in \mathbb{N}$ and let $$\p =\langle x_1,x_2, \ldots, x_n,x | x_ix\omega_i=x\tau_i, 1\le i\le n \rangle$$ where $\omega_i, \tau_i$ are words in which $x$ occurs only with positive exponent. Suppose further that for each $i$ the total exponent $exp(x,\omega_i)$ of $x$ in $\omega_i$ coincides with $exp(x,\tau_i)$. We show that $\p$ satisfies our test and therefore it is DR.

The vector $v=(0,0,\ldots ,0,1)$ is orthogonal to each $q(r_i)$, where $r_i=x_ix\omega_i(\tau_i)^{-1}x^{-1}$. The weight of $s(1,r_i)=r_i$ is $0$. For any other occurrence $k\in \occ(x_i,r_i)$ of $x_i$ in $r_i$, $\overline{s(k,r_i)}=exp(x,s(k,r_i))<0$. At the same time, all the elements in $M_{i,j}$ are negative for $j\neq i$. Then $\p$ satisfies the I-test with respect to $v$ and the natural order $1,2,\ldots, n$ of the columns and rows.
\end{ej}


\section{Relationship with previous weight tests}

Gersten's weight test provides a useful tool for proving diagrammatic reducibility (and hence asphericity) of group presentations \cite{Ger}. In \cite{HR} Huck and Rosebrock introduced a more general weight test which guarantees asphericity. We recall these criteria and exhibit examples of presentations which satisfy the I-test but not the previous weight tests.

As explained in \cite{HR}, given a presentation $\p=\langle x_1, x_2,\ldots ,x_n | r_1, r_2, \ldots, r_m\rangle$,  the \textit{Whitehead graph} $W_{\p}$ is the boundary of a regular neighborhood of the unique vertex of $K_{\p}$. In other words, $W_{\p}$ is the undirected graph (possibly with parallel edges) which can be described as follows. For each word $w=x_{i_1}^{\epsilon_1}x_{i_2}^{\epsilon_2} \ldots x_{i_l}^{\epsilon_l}$, with $\epsilon_i=\pm 1$, consider the graph $W_w$ whose vertex set is $\{x_1,-x_1,x_2,-x_2,\ldots, x_n,-x_n\}$ and with an edge $(\epsilon_kx_{i_k},-\epsilon_{k+1}x_{i_{k+1}})$ for each $1\le k\le l$ (subindices considered modulo $l$). Then $W_{\p}$ is the union of all the graphs $W_{r_j}$, $1\le j\le m$.

Suppose that no relator $r_j$ is a proper power. A \textit{weight function} on $W_{\p}$ is a real valued function $g$ on the edges of $W_{\p}$ (i.e. the \emph{corners} of $K_{\p}$). A weight function $g$ satisfies Huck and Rosebrock's weight test if

\begin{enumerate}

\item For each $1\le j\le m$, $\sum\limits_{e\in E(W_{r_j})} g(e)\le \len(r_j)-2$ and

\item For each simple cycle $z$ in $W_{\p}$, $\sum\limits_{e\in z} g(e)\ge 2$.

\end{enumerate}

Theorem 2.2 in \cite{HR} claims that the existence of a weight function satisfying the test implies vertex asphericity, a concept which in turn implies asphericity of $\p$. Gersten's original weight test requires a stronger hypothesis on the weight function $g$ and implies diagrammatic reducibility (see \cite[Theorem 4.7]{Ger}).

The following example shows a presentation which satisfies the I-test while no weight function $g$ satisfies the weight test.

\begin{ej} \label{ejweighttest}
Let $\p=\langle x,y| x^3yxy\rangle$. Then $\p$ satisfies the I-test but it does not satisfy the weight test. The Whitehead graph of $\p$ appears in Figure \ref{wei}.

\begin{figure}[h] 
\begin{center}
\includegraphics[scale=0.5]{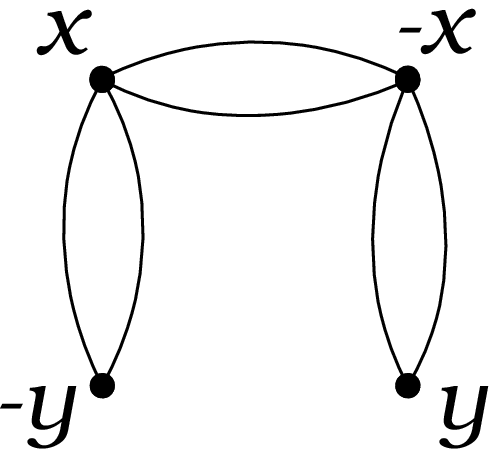}

\caption{The Whitehead graph of $\p=\langle x,y| x^3yxy\rangle$.}\label{wei}
\end{center}
\end{figure}
Suppose a weight function $g$ satisfies the weight test. Condition (2) implies that the sum of the weights of the two edges from $x$ to $-y$ is at least $2$. The same holds for the two edges from $x$ to $-x$ and from $-x$ to $y$. Then condition (1) gives $6\le \sum\limits_{e\in W_{\p}} g(e)\le \len(x^3yxy)-2=4$, a contradiction.

The vector $v=(1,-2)$ is orthogonal to $q(x^3yxy)=(4,2)$. The coefficient of the second row in the weight matrix is $M_{2,1}=\{-3,-2\}$, so $\p$ satisfies the I-test.

Similarly, the presentation $\p=\langle x,y | x^3y^3xy \rangle$ provides another example of a presentation which satisfies the I-test but not the weight test. 
\end{ej}

The presentations of Example \ref{clasequecumple}, which are proved to be DR by the I-test, do not satisfy in general the weight tests. Take for instance $\omega_1=x_1^{-1}x$ and $\tau_1=x_1xx_1$. Then the weight test fails independently of the choices of $\omega_i$, $\tau_i$ for $2\le i\le n$.

\section{Adian presentations and LOGs}

We now apply our test to generalize some results of Howie and Gersten on LOTs and Adian presentations. This will produce extensive families of DR presentations.

Recall that a labeled oriented graph (LOG) $\Gamma$ consists of two finite sets $V$, $E$, and three maps $\iota, \tau, \lambda: E\to V$. The elements of $V$ are called vertices and the elements of $E$, edges. If $e\in E$, $\iota (e)$ is called the initial vertex of $e$, $\tau (e)$ its terminal vertex, and $\lambda (e)$ its label. A LOG $\Gamma$ is called a labeled oriented tree (LOT) if the underlying graph is a tree. Associated with a LOG $\Gamma$ there is a presentation. An edge $e$ of $\Gamma$ with initial vertex $x$, terminal vertex $y$ and label $z$, has an associated relator $r_e=xzy^{-1}z^{-1}$. The presentation corresponding to a LOG $\Gamma$ with vertex set $V=\{x_1,x_2,\ldots , x_n\}$ and edge set $E=\{e_1, e_2, \ldots, e_m\}$ is $\p _{\Gamma}=\langle x_1, x_2,\ldots , x_n \ | \ r_{e_1}, r_{e_2}, \ldots , r_{e_m}\rangle$.

%

Given a LOG $\Gamma$, consider the undirected graph $I(\Gamma)$ with the same vertex set as $\Gamma$ and with an edge $(x,y)$ for each edge of $\Gamma$ with label $x$ and terminal vertex $y$. Similarly, define $T(\Gamma)$ with the same vertex set and an edge $(x,y)$ for each edge of $\Gamma$ with initial vertex $x$ and label $y$. The following result by Howie (\cite[Corollary 10.2]{Ho4}) says that these graphs can be used to prove asphericity of $\Gamma$.

\begin{teo}[Howie] \label{howie}
If either $I(\Gamma)$ or $T(\Gamma)$ is a tree, $\p _{\Gamma}$ is aspherical.
\end{teo}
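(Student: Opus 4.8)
The plan is to deduce Howie's theorem from Theorem~\ref{maindr} by producing, in the case where $I(\Gamma)$ is a tree, a vector $v$ orthogonal to every $q(r_e)$ for which the weight matrix of $\p_\Gamma$ is good; the case of $T(\Gamma)$ a tree is symmetric (replace each relator $r_e=xzy^{-1}z^{-1}$ by the relator of the reversed LOG, or equivalently run the argument with the roles of initial and terminal vertices interchanged). The key observation is that for a LOG relator $r_e=xzy^{-1}z^{-1}$ one has $q(r_e)=q(x)-q(y)$ (the label $z$ contributes $q(z)-q(z)=0$), so a vector $v\in\R^n$ is orthogonal to all the $q(r_e)$ exactly when $\overline{x}=\overline{y}$ for every edge $e$ of $\Gamma$ with endpoints $x,y$; that is, $w\mapsto\overline{w}$ is constant on each connected component of the underlying graph of $\Gamma$. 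Since $\Gamma$ is connected (a LOT, or we may treat components separately), such a $v$ gives every vertex the same weight, and we are free to choose the common value and, more usefully, to perturb within this one-dimensional-per-component family — but a single global constant kills too much, so instead I would use that the orthogonality constraints only force $\overline{x}=\overline{y}$ along \emph{edges}, and build $v$ from the tree structure of $I(\Gamma)$.

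Here is the mechanism. First I would compute the relevant weights in $M_{i,e}$ for the relator $r_e=xzy^{-1}z^{-1}$ associated to an edge $e$ with initial vertex $x$, terminal vertex $y$, label $z$. Writing out $s(k,r_e)$ for the four letter positions: the occurrence of $x$ (position $1$, exponent $+1$) contributes $\overline{s(1,r_e)}=\overline{r_e}=0$; the two occurrences of $z,z^{-1}$ contribute $\overline{z y^{-1} z^{-1}}=-\overline{y}$ and $\overline{z^{-1}}=-\overline{z}$ (using $\overline{x}=\overline{y}$ and $\overline{r_e}=0$); the occurrence of $y^{-1}$ (exponent $-1$) contributes $\overline{s(k,r_e)}=\overline{z^{-1}}=-\overline{z}$. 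So the entry of the weight matrix in row $x$, column $e$ always contains the number $0$, and in row $y$ it contains $-\overline{z}$, and in row $z$ it contains $\{-\overline{y},-\overline{z}\}$ (with adjustments when $x$, $y$, $z$ are not distinct — an Adian/LOT relator can have coincidences, and these must be checked separately but do not change the essential picture). The crucial point: provided we choose $v$ so that $\overline{z}<0$ strictly for every label $z$ (equivalently, every label vertex has strictly negative weight while staying constant along edges — this is where the \emph{tree} hypothesis on $I(\Gamma)$ enters, letting us consistently orient/weight vertices so that, roughly, weights decrease as we move away from a chosen root through the edges of $I(\Gamma)$), the entry $M_{x,e}$ has $0$ as its unique maximum over the whole row $x$ and $0$ occurs with multiplicity one.

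With that in hand, the ordering is dictated by the tree $I(\Gamma)$. Pick a root of $I(\Gamma)$ and order the edges $e_1,\dots,e_m$ of $I(\Gamma)$ — equivalently the relators — as a "leaf-stripping" order, and assign to column $e_k$ the row indexed by the vertex of $I(\Gamma)$ which is the child endpoint (farther from the root) of $e_k$; since $I(\Gamma)$ is a tree with $n$ vertices and $m=n-1$ edges, distinct edges get distinct child vertices, so this is a valid choice of $m$ rows out of $n$. For this pairing, condition~(1) of Definition~\ref{defiasph} holds because $x_{i_k}$ genuinely occurs in $r_{j_k}$; condition~(2) holds because the paired entry contributes the value $0$ which, by the strict-negativity arrangement above, is the maximum of the entire row; and condition~(3), the multiplicity-one requirement over the remaining columns $\bigcup_{l\ge k} M_{i_k, j_l}$, holds because once we have stripped the leaves processed before step $k$, the vertex $x_{i_k}$ appears in the remaining relators only through its role as a label or as an endpoint \emph{below} it in the tree, all of which carry strictly negative weight. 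Thus $M$ is good and Theorem~\ref{maindr} gives that $\p_\Gamma$ is DR, hence aspherical.

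The main obstacle I anticipate is bookkeeping the degenerate relators: when the initial vertex, terminal vertex, and label of an edge are not all distinct (allowed for general LOGs, and relevant to the Adian case), the four positions of $r_e=xzy^{-1}z^{-1}$ can fall in the same row, so the families $M_{i,e}$ pick up extra entries and one must recheck that $0$ is still the strict maximum of the paired row and that the multiplicity-one condition survives; a careful case analysis ($x=y$, $x=z$, $y=z$, or $r_e$ not reduced) is needed, and this is where the hypothesis that labels have strictly smaller weight must be used most delicately. The second, lesser obstacle is verifying that the tree hypothesis on $I(\Gamma)$ really does permit a weight vector $v$, orthogonal to all $q(r_e)$, with all label-weights strictly negative: this is a short argument assigning weights level-by-level down the tree $I(\Gamma)$ and then checking the constancy-along-edges condition is automatically met, but it should be spelled out.
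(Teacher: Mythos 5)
Your overall strategy (derive Howie's theorem from Theorem \ref{maindr} by exhibiting a good weight matrix, with the leaf-stripping order coming from the acyclic graph) is exactly the paper's route, but two steps in your execution fail. First, the weight vector you want does not exist: orthogonality to $q(r_e)=q(x)-q(y)$ forces $\overline{x}=\overline{y}$ along every edge of $\Gamma$ itself, and since the underlying graph of a LOT is connected this makes the vertex weights a single constant $c$; there is no freedom to make ``weights decrease down $I(\Gamma)$'' or to have all labels strictly negative while keeping $0$ as the strict maximum of the initial-vertex rows. Indeed, if $c<0$ then every row of a vertex that is the terminal vertex of some edge contains the entry $-c>0$, which beats $0$. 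Second, your computation of the label row is wrong: for $r_e=xzy^{-1}z^{-1}$ the last letter has $\epsilon=-1$, so $s(4,r_e)$ is the \emph{empty} word and contributes weight $0$, not $-\overline{z}$; hence $M_{z,e}=\{-\overline{y},\,0\}$, not $\{-\overline{y},-\overline{z}\}$. That missed $0$ is decisive: the value $0$ occurs in the row of a vertex once for every remaining edge in which that vertex is an initial vertex \emph{or} a label, so the multiplicity-one condition for $0$ is governed by $T(\Gamma)$ (the initial--label graph), not by $I(\Gamma)$. Your pairing of each column with the child endpoint of its edge in the rooted tree $I(\Gamma)$ (i.e.\ with its label or terminal vertex) is the right pairing for the other sign convention, but you then verify conditions (2) and (3) of Definition \ref{defiasph} using the value $0$ sitting in the initial-vertex row, so the argument as written is internally inconsistent and the row-maximum claim fails.

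The repair is essentially what the paper does via deforestations (Definition \ref{defo}, Proposition \ref{defimplicadr} and the proposition following it). If $I(\Gamma)$ has no cycles, take $v=(-1,-1,\ldots,-1)$; then the entries are: initial row $\{0\}$, terminal row $\{1\}$, label row $\{1,0\}$, so the maximum value $1$ occurs in the row of a vertex once for each remaining edge having that vertex as terminal vertex or label, which is exactly its degree in the remaining part of $I(\Gamma)$; leaf-stripping $I(\Gamma)$ and pairing each stripped edge with its leaf vertex then makes $M$ good, and Theorem \ref{maindr} gives DR, hence asphericity. If instead $T(\Gamma)$ has no cycles, take $v=(1,1,\ldots,1)$ and run the same argument with the value $0$ in the initial and label rows. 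Note also that with the multiset bookkeeping of the weight matrix no separate case analysis for coincidences among $x,y,z$ is needed: the counting is by incidences (occurrences of the letter in the relators), not by distinct vertices.
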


We will prove a generalization of this result.

\begin{defi} \label{defo}
Let $\Gamma$ be a LOG and let $\Gamma'$ be a sub-LOG (i.e. an \emph{admissible} subgraph, in Howie's terminology \cite{Ho4}). We say that there is a \textit{deforestation of type IL/T} from $\Gamma$ to $\Gamma'$ if the number of edges in $\Gamma \smallsetminus \Gamma'$ equals the number of vertices in $\Gamma \smallsetminus \Gamma'$ and there exists an ordering $e_1,e_2, \ldots , e_m$ of the edges of $\Gamma \smallsetminus \Gamma'$ and an ordering $x_1,x_2, \ldots , x_m$ of the vertices of $\Gamma \smallsetminus \Gamma'$ such that for each $1\le j\le m$ one of the following holds:  


\begin{enumerate}

\item[(IL)] The number of edges in $\Gamma_{j-1}=\Gamma \smallsetminus \{e_1, e_2,\ldots ,e_{j-1}\}$ which have initial vertex $x_j$ plus the number of edges in $\Gamma_{j-1}$ having label $x_j$ is one. Moreover, this unique edge with initial vertex or label $x_j$ is $e_j$.



\item[(T)] $x_j$ is the terminal vertex of a unique edge in $\Gamma_{j-1}$, this edge being $e_j$. Moreover, there is no edge in $\Gamma$ whose initial vertex or label is $x_j$.
\end{enumerate}

A \textit{deforestation of type TL/I} is defined similarly by interchanging the words ``initial'' and ``terminal''. A LOG $\Gamma$ is \textit{deforestable} if there is a deforestation of type IL/T to a discrete subgraph or a deforestation of type TL/I to a discrete subgraph. Of course, a LOG $\Gamma$ is deforestable if and only if its opposite $\Gamma^{op}$ is deforestable.

\end{defi}


\begin{prop} \label{defimplicadr}
If $\Gamma$ is deforestable LOG, $\p _\Gamma$ satisfies the I-test.
\end{prop}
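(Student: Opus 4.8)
The plan is to translate the combinatorial deforestation data directly into a choice of weight vector $v$ and the required orderings of the columns and rows of the weight matrix. Recall that the relator associated to an edge $e$ with initial vertex $x$, terminal vertex $y$ and label $z$ is $r_e = xzy^{-1}z^{-1}$, so $q(r_e) = q(x) + q(z) - q(y) - q(z) = q(x) - q(y)$. Hence a vector $v \in \R^n$ is orthogonal to every $q(r_e)$ if and only if $v$ takes the same value on the two endpoints of each edge of $\Gamma$; equivalently, $v$ is constant on each connected component of the underlying graph of $\Gamma$. (Writing $v_x$ for the $x$-coordinate of $v$, this says $v_x = v_y$ whenever $x,y$ are joined by an edge.) So the orthogonality constraint is easy to satisfy; the work is in arranging that some such $v$ makes the weight matrix good.

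First I would treat the case of a deforestation of type IL/T to a discrete subgraph (the TL/I case follows by passing to $\Gamma^{op}$, which reverses the roles and is deforestable iff $\Gamma$ is; and by the remark after Theorem \ref{maindr} a deforestation to a discrete subgraph that leaves some vertices untouched extends to the full vertex set, so I may assume $\Gamma \smallsetminus \Gamma'$ uses all $n$ vertices and all $m$ edges with $n = m$). Using the ordering $e_1, \dots, e_m$ of edges and $x_1, \dots, x_m$ of vertices from Definition \ref{defo}, I would take as the column order $j_k = e_k$ and as the row order $i_k = x_k$. The remaining task is to choose $v$ so that, for each $k$, condition (1) (the entry $M_{x_k, e_k}$ is non-empty), condition (2) (its maximum is the maximum of the whole row $x_k$), and condition (3) (that maximum has multiplicity one among the entries of row $x_k$ in columns $e_k, \dots, e_m$) all hold. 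Here I would compute the weights $\overline{s(k,r_e)}$ explicitly: for $r_e = x z y^{-1} z^{-1}$ with occurrences of a letter $x_i$, the subwords $s(k,r_e)$ are among $xzy^{-1}z^{-1}$ (weight $v_x$, when $x_i = x$ and $k=1$), $zy^{-1}z^{-1}$ (weight $v_z - v_y$... but $v_y = v_x = v_z$ on a component, so this is $0$ when that component's value is subtracted — more precisely its weight equals $v_z$, the common component value, minus the reference), $y^{-1}z^{-1}$, and $z^{-1}$. The key observation is that the occurrence of the initial vertex $x$ of $e$ at position $1$ gives $\overline{s(1,r_e)} = \overline{1} = 0$ wait — rather $\overline{r_e} = \langle q(r_e), v\rangle = 0$, so that entry is $0$; the occurrence of the label $z$ at position $2$ gives $s(2,r_e) = zy^{-1}z^{-1}$ of weight $v_z - v_y - v_z + (\text{nothing}) $, and since $v_y = v_z$ this is $-v_y + $ (we must be careful: weight is $\langle q(zy^{-1}z^{-1}), v\rangle = v_z - v_y - v_z = -v_y$); the occurrence of the label $z$ at position $4$ (with $\epsilon = -1$) uses $s(4,r_e) = z^{-1}$ — no wait, $\epsilon_4 = -1$ so $s(4,r_e) = r_e^{(5)}$, the empty word, weight $0$. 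So the entries of $M_{i,e}$ are: a $0$ in row $\iota(e)$ from position $1$; a $0$ in row $\lambda(e)$ from the position-$4$ occurrence; and in row $\lambda(e)$ also the value $-v_{\tau(e)}$ from the position-$2$ occurrence; and in row $\tau(e)$ a value from the position-$3$ occurrence, namely $\overline{s(3,r_e)} = \overline{z^{-1}} = -v_z$. I would organize this into a clean table.

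The strategy for choosing $v$ is then: on each connected component of $\Gamma$, assign a \emph{large positive} common value if that component is to be "removed early," decreasing as we proceed — more precisely, process the edges $e_1, \dots, e_m$ in order and ensure that when $e_k$ is removed, the relevant entry in row $x_k$ (column $e_k$) strictly dominates all other surviving entries in that row. Concretely, I would set $v_{x}$ to be a strictly decreasing function of the step at which the component of $x$ first gets disturbed, with gaps large enough to beat the bounded contributions; in case (IL) the condition that $e_j$ is the \emph{unique} surviving edge with initial vertex or label $x_j$ guarantees that after step $j-1$ the entries of row $x_j$ among columns $e_j, \dots, e_m$ come \emph{only} from $e_j$ (as an initial-vertex or label occurrence), so multiplicity-one and maximality reduce to comparing the two or three entries contributed by $e_j$ itself, which I arrange via the sign of $v$; in case (T), $x_j$ is a terminal vertex of the unique surviving edge $e_j$ and occurs in \emph{no} edge of $\Gamma$ as initial vertex or label, so row $x_j$ among surviving columns has the single entry $-v_{\tau(e_j)} = -v_{x_j}$ from $e_j$, trivially of multiplicity one, and I just need it to be the row maximum, which holds vacuously since it is the only entry. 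The main obstacle I anticipate is bookkeeping: making the inductive choice of the component-values of $v$ precise so that all the "strictly dominates" requirements across all $m$ steps are simultaneously met, and checking the few entries contributed within a single column $e_j$ to different rows don't accidentally violate condition (3) for an \emph{already-processed} row. I expect this to come down to choosing the values of $v$ on components to be widely separated powers of a large constant, with the sign convention dictated by whether a step is of type (IL) (where we want the position-$1$ or label entries compared) or type (T); once the separation is large enough compared to the (bounded) number of occurrences in all relators, conditions (1)–(3) follow by direct inspection.
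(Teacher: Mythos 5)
Your overall skeleton is the paper's: use the deforestation ordering $e_1,\dots,e_m$ and $x_1,\dots,x_m$ as the column and row orderings, and pick a weight vector constant on components (for $r_e=xzy^{-1}z^{-1}$ one has $q(r_e)=q(x)-q(y)$, as you say). But the proof is not complete as written, and the part you do spell out has errors. The key point you miss is that no elaborate choice of $v$ is needed: the constant vector $v=(1,1,\dots,1)$ already works (and $(-1,\dots,-1)$ for the TL/I case, so there is no need to pass to $\Gamma^{op}$). With this $v$ every entry of the weight matrix is $0$ or $-1$: the initial-vertex occurrence and the second label occurrence contribute $0$, the terminal-vertex occurrence and the first label occurrence contribute $-1$. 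Then (IL) at step $j$ gives exactly one $0$ in $\bigcup_{l\ge j}M_{j,l}$, sitting in $M_{j,j}$, which is automatically the row maximum; and (T) gives $M_{j,j}=\{-1\}$, $M_{j,l}=\emptyset$ for $l>j$, and no $0$ anywhere in row $j$ (this is where the clause ``no edge of $\Gamma$ has initial vertex or label $x_j$'' is used), so $-1$ is the row maximum with the right multiplicity. That is the whole proof.

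Your substitute scheme with ``widely separated'' component values is both unnecessary and, as described, not sound. In case (T) you claim the entry in row $x_j$, column $e_j$ is $-v_{\tau(e_j)}$ and that row-maximality ``holds vacuously since it is the only entry''; in fact that entry is $-v_{\lambda(e_j)}$ (you computed this correctly earlier and then contradicted it), and condition (2) of Definition \ref{defiasph} ranges over the \emph{whole} row, including already-processed columns: $x_j$ may be the terminal vertex of earlier edges $e_l$ ($l<j$), contributing entries $-v_{\lambda(e_l)}$, and with your non-constant, widely separated values these can exceed $-v_{\lambda(e_j)}$, breaking condition (2). Similarly, in case (IL) your claim that the entries of row $x_j$ among columns $e_j,\dots,e_m$ ``come only from $e_j$'' is false — later edges with terminal vertex $x_j$ also contribute — though this is harmless once all coordinates of $v$ are positive, since those contributions are negative and cannot affect the maximality or multiplicity of the $0$ coming from $e_j$. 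Finally, your reduction ``I may assume $n=m$'' is not available (for a LOT one always has $n=m+1$) and is not needed: the I-test only asks for an ordering of $m$ of the $n$ rows. With $v=(1,\dots,1)$ all of these issues disappear and the verification is a two-line case check.
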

\begin{proof}
Suppose there is a deforestation of type IL/T from $\Gamma$ to a discrete sub-LOG $\Gamma'$. Let $e_1,e_2, \ldots, e_{m}$ and $x_1,x_2, \ldots, ,x_{m}$ be as in Definition \ref{defo}. Let $x_{m+1},x_{m+2},\ldots, x_n$ be the vertices of $\Gamma'$. Then $\p _\Gamma=\langle x_1,x_2, \ldots, x_{m}, x_{m+1}, \ldots , x_n $ $|$ $ r_1, r_2, \ldots , r_m \rangle$, where $r_j$ is the relation associated to $e_j$.

Suppose $e_j$ is an edge of $\Gamma$ with initial vertex $x$, terminal vertex $y$ and label $z$ (we do not require them to be different). Then $r_j=xzy^{-1}z^{-1}$. In particular $v=(1,1,\ldots ,1)$ is orthogonal to each $q(r_j)$. We will prove that $\p _\Gamma$ satisfies the $I$-test for $v$ and the natural order of the columns and rows of $M$. The coefficient $M_{i,j}$ consists of a (possibly empty) family which contains the elements $-1$ and $0$ with certain multiplicity. If $x_i$ is the terminal vertex of $e_j$, then $-1$ is an element of this family, if $x_i$ is the initial vertex, then $0\in M_{i,j}$, and if $x_i$ is the label, $-1,0\in M_{i,j}$.

Let $1\le j\le m$. If condition (IL) holds, then $0$ is an element of $M_{j,j}$ of multiplicity one and $0\notin M_{j,k}$ for each $k>j$. If condition (T) holds, then $M_{j,j}=\{-1\}$, $M_{j,k}=\emptyset$ for each $k>j$ and $M_{j,k}=\emptyset$ or $\{-1\}$ for each $1\le k\le m$. This proves that $\p _\Gamma$ satisfies our test.

Finally, if there is a deforestation of type TL/I from $\Gamma$ to a discrete sub-LOG, $\p _{\Gamma}$ satisfies the I-test for $v=(-1,-1, \ldots , -1)$. 
\end{proof}

\begin{prop}
Let $\Gamma$ be a LOG. If $T(\Gamma)$ or $I(\Gamma)$ has no cycles, $\Gamma$ is deforestable.
\end{prop}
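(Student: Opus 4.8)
The plan is to reduce to the case in which $I(\Gamma)$ is acyclic and then to construct a deforestation of type TL/I to a discrete subgraph by successively peeling off leaves of $I$. First I would dispose of the hypothesis on $T(\Gamma)$: if $e$ is an edge of $\Gamma$ with $\iota(e)=x$, $\tau(e)=y$, $\lambda(e)=z$, then the corresponding edge of $\Gamma^{op}$ has initial vertex $y$, terminal vertex $x$ and label $z$, so it contributes the undirected edge $\{z,x\}$ to $I(\Gamma^{op})$ and the edge $\{x,z\}$ to $T(\Gamma)$; hence $I(\Gamma^{op})$ and $T(\Gamma)$ are the same graph. Thus, if $T(\Gamma)$ has no cycles, then $I(\Gamma^{op})$ has no cycles, $\Gamma^{op}$ is deforestable by the case we are about to prove, and $\Gamma$ is deforestable because deforestability is invariant under passing to the opposite LOG (Definition \ref{defo}). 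So it suffices to treat the case where $I(\Gamma)$ has no cycles.

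Now assume $I(\Gamma)$ has no cycles, so it is a simple forest; in particular no edge of $\Gamma$ has label equal to its terminal vertex, and distinct edges of $\Gamma$ determine distinct edges of $I$. Consequently, for every sub-LOG $\Sigma$ obtained from $\Gamma$ by deleting edges, $I(\Sigma)$ is a forest with exactly $|E(\Sigma)|$ edges, and for each vertex $x$ the degree of $x$ in $I(\Sigma)$ equals the number of edges of $\Sigma$ with terminal vertex $x$ plus the number of edges of $\Sigma$ with label $x$ (no edge is counted twice, since there are no loops). I would then build the orderings inductively. Put $m=|E(\Gamma)|$ and $\Gamma_0=\Gamma$. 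Given $1\le j\le m$ with $\Gamma_{j-1}=\Gamma\smallsetminus\{e_1,\dots,e_{j-1}\}$ already defined and having $m-j+1\ge 1$ edges, the forest $I(\Gamma_{j-1})$ is nonempty, hence has a vertex $x_j$ of degree exactly one; let $e_j$ be the unique edge of $\Gamma_{j-1}$ whose terminal vertex or label is $x_j$, and set $\Gamma_j=\Gamma_{j-1}\smallsetminus\{e_j\}$. By the degree identity above, condition (TL) of Definition \ref{defo} holds at step $j$. The vertices $x_1,\dots,x_m$ are pairwise distinct: if $x_j=x_{j'}$ with $j'<j$, then at step $j'$ the only edge of $I(\Gamma_{j'-1})$ incident to $x_{j'}$ was removed, so $x_{j'}$ has degree zero in $I(\Gamma_{j'})\supseteq I(\Gamma_{j-1})$, contradicting that $x_j$ has degree one there. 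Likewise the $e_j$ are pairwise distinct (as $e_j\in E(\Gamma_{j-1})$ but $e_j\notin E(\Gamma_{j'})$ for $j'\ge j$), so $e_1,\dots,e_m$ is an ordering of all of $E(\Gamma)$.

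Finally, let $\Gamma'$ be the discrete sub-LOG of $\Gamma$ with vertex set $V(\Gamma)\smallsetminus\{x_1,\dots,x_m\}$ and no edges. Since $\Gamma_m$ has no edges, $\Gamma\smallsetminus\Gamma'$ consists of exactly the $m$ edges $e_1,\dots,e_m$ and the $m$ vertices $x_1,\dots,x_m$, and the orderings constructed above exhibit a deforestation of type TL/I (using condition (TL) at every step) from $\Gamma$ to $\Gamma'$; hence $\Gamma$ is deforestable. I do not expect a real obstacle here: the one point to get right is that $I(\Gamma_{j-1})$ has the same number of edges as $\Gamma_{j-1}$, so the $I$-forest stays nonempty precisely while edges remain and a (TL) move via a leaf is always available — in particular condition (I) is never needed — together with the elementary bookkeeping that a vertex's degree in $I$ drops to zero once it has been used and never rises again, which yields distinctness of the $x_j$.
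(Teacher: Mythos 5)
Your proof is correct and is essentially the paper's own argument in mirror image: the paper peels leaves of $T(\Gamma)$ to build a deforestation of type IL/T and handles the acyclic-$I(\Gamma)$ case via the identity $I(\Gamma)=T(\Gamma^{op})$ together with invariance of deforestability under passing to $\Gamma^{op}$, while you peel leaves of $I(\Gamma)$ to build a deforestation of type TL/I and pass to $\Gamma^{op}$ for the $T(\Gamma)$ case. Your bookkeeping (keeping all vertices and tracking degrees in $I(\Gamma_{j-1})$, with the explicit observation that acyclicity rules out loops and repeated edges) is a slightly more detailed rendering of the same leaf-removal induction, and both proofs likewise never need the second condition ((T), resp.\ (I)) of Definition \ref{defo}.
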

\begin{proof}
Suppose $T(\Gamma)$ has no cycles and that $x_1$ is a leaf of $T(\Gamma)$. Then there is a unique edge $e_1$ of $\Gamma$ whose initial vertex or label is $x_1$, but not both simultaneously.

Let $\Gamma_1=\Gamma \smallsetminus \{e_1\}$ and let $T_1$ be the subgraph of $T(\Gamma)$ induced by the vertices different from $x_1$. By induction suppose defined $x_1,x_2, \ldots , x_j$ and $e_1, e_2, \ldots , e_j$. Let $\Gamma_j=\Gamma \smallsetminus\{e_1, e_2,\ldots , e_j\}$ and let $T_{j}$ be the subgraph of $T(\Gamma)$ induced by the vertices different from $x_1,x_2,\ldots ,x_j$. Let $x_{j+1}$ be a leaf of $T_{j}$. Then there is a unique edge $e_{j+1}$ of $\Gamma_j$ whose initial vertex or label is $x_{j+1}$. This process ends when $T_j$ is discrete for some $j$, say $j=m$, and, then, $\Gamma_m$ is also discrete. The orders $e_1,e_2,\ldots, e_m$ and $x_1,x_2,\ldots, x_m$ constructed show that there is a deforestation of type IL/T from $\Gamma$ to $\Gamma'=\Gamma_m\smallsetminus \{x_1,x_2, \ldots , x_m\}$. Therefore, $\Gamma$ is deforestable. Note that condition (T) is never used. If $I(\Gamma)=T(\Gamma^{op})$ has no cycles, $\Gamma^{op}$ is deforestable, and then so is $\Gamma$.
\end{proof}

\begin{ej}
The following LOT is deforestable but both $I(\Gamma)$ and $T(\Gamma)$ have cycles.
\begin{figure}[h] 
\begin{center}
\includegraphics[scale=0.5]{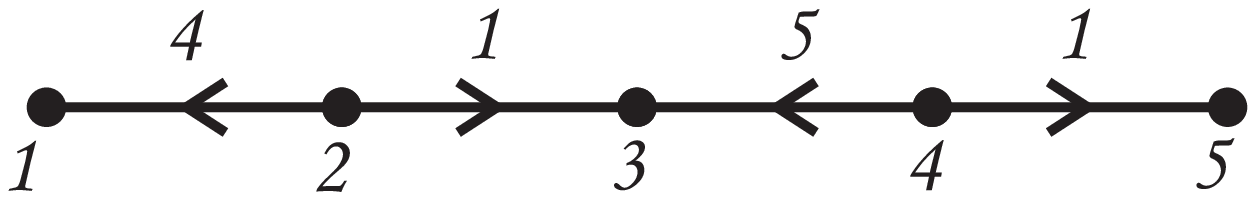}
\end{center}
\end{figure}

There is a deforestation of type IL/T from $\Gamma$ to a one-vertex LOT. An order of the vertices which satisfies Definition \ref{defo} is $5, 3, 2, 1$.
\end{ej}

In Section \ref{seccionextended} we will extend Proposition \ref{defimplicadr} to a wider class of LOGs.

\bigskip

An \textit{Adian} presentation is a presentation $\p=\langle x_1, x_2, \ldots, x_n | U_j=V_j, j \in J \rangle$ where $U_j$ and $V_j$ are non-trivial positive words. Gersten defines the \textit{left graph} $L(\p)$ of $\p$ as the undirected graph whose vertices are the first letters of the words $U_j$ and the first letters of the words $V_j$. For each $j\in J$ there is an edge (the $j$-th edge) from the first letter of $U_j$ to the first letter of $V_j$. The \textit{right graph} $R(\p)$ is defined similarly considering the last letters of the words $U_j, V_j$. Gersten proved  the following result \cite[Proposition 4.15]{Ger}.

\begin{prop}[Gersten] \label{adian}
Let $\p=\langle x_1, x_2, \ldots, x_n | U_j=V_j, j \in J \rangle$ be an Adian presentation such that $\len(U_j)=\len(V_j)$ for each $j\in J$. If either $L(\p)$ or $R(\p)$ has no cycles, $K_\p$ is DR. 
\end{prop}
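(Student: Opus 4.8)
The plan is to derive this from our own criterion: I will show that when $L(\p)$ has no cycles the presentation $\p$ satisfies the I-test, and when $R(\p)$ has no cycles the same holds after a harmless reversal of the relators; Theorem \ref{maindr} then finishes the proof. Write $r_j=U_jV_j^{-1}$. Since $U_j$ and $V_j$ are positive words with $\len(U_j)=\len(V_j)$, the vector $v=(1,1,\ldots,1)$ satisfies $\langle q(r_j),v\rangle=\len(U_j)-\len(V_j)=0$ for every $j$, so it is orthogonal to each $q(r_j)$ and the weight matrix $M$ of $\p$ relative to $v$ is defined.

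First I would compute $M$ explicitly. Since $\overline w=\sum_k\epsilon_{k,w}$, a direct count shows that if $x_i$ is the $k$-th letter of $U_j$ the corresponding term $s(\cdot,r_j)$ has weight $1-k\le 0$, and if $x_i$ is the $k$-th letter of $V_j$ the corresponding term also has weight $1-k\le 0$. Consequently every entry of $M$ is $\le 0$; the number $0$ belongs to $M_{i,j}$ precisely when $x_i$ is the first letter of $U_j$ or of $V_j$; and its multiplicity in $M_{i,j}$ equals the number of those two words having $x_i$ as first letter. In particular, if $L(\p)$ has no cycles it has no loop, so $0$ occurs with multiplicity at most one in each $M_{i,j}$.

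Assume $L(\p)$ has no cycles. Then $L(\p)$ is a forest with $m$ edges --- the $j$-th joining the first letter of $U_j$ to the first letter of $V_j$ --- hence it has at least $m+1$ vertices, all among $x_1,\ldots,x_n$, so $n\ge m$. Now I strip leaves: for $k=1,\ldots,m$ the forest $L(\p)\smallsetminus\{e_{j_1},\ldots,e_{j_{k-1}}\}$ still has $m-k+1\ge 1$ edges and therefore a vertex $x_{i_k}$ of degree exactly $1$, incident to a single remaining edge $e_{j_k}$; these vertices are pairwise distinct. I claim these orderings make $M$ good. Since $0\in M_{i_k,j_k}$, this family is non-empty and --- all entries of $M$ being $\le 0$ --- its maximum $0$ is also the maximum of the whole row $i_k$; this gives conditions (1) and (2) of Definition \ref{defiasph}. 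For condition (3): if $l\ge k$ then $0\in M_{i_k,j_l}$ only when $x_{i_k}$ is an endpoint of $e_{j_l}$, which among the remaining edges $e_{j_k},\ldots,e_{j_m}$ happens only for $l=k$, and there the multiplicity of $0$ is one because $L(\p)$ has no loop. Hence $\p$ satisfies the I-test for $v$ and these orderings, and $K_\p$ is DR by Theorem \ref{maindr}.

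It remains to treat the case where $R(\p)$ has no cycles, and here I would reduce to the previous case by reversing every relator: let $\p'=\langle x_1,\ldots,x_n\mid U_j'=V_j',\ j\in J\rangle$, where $w'$ denotes the word $w$ read backwards. Then $\p'$ is again Adian with $\len(U_j')=\len(V_j')$, and its left graph $L(\p')$ is exactly $R(\p)$, hence acyclic; by the case just handled, $K_{\p'}$ is DR. Reversing a relator word merely reparametrizes the attaching map of the corresponding $2$-cell by a homeomorphism of the circle (a reflection composed with a rotation), so there is a cellular homeomorphism $K_\p\to K_{\p'}$ restricting to the identity on the $1$-skeleton; lifting it to the universal covers and invoking Theorem \ref{corson}, we conclude that $K_\p$ is DR as well. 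The steps needing care are the computation of $M$ and the verification that leaf-stripping fulfils \emph{all} of (1)--(3) of Definition \ref{defiasph} --- condition (3), about the multiplicity of $0$ in the tail $\bigcup_{l\ge k}M_{i_k,j_l}$, is exactly what forces the edges to be removed in leaf order --- whereas the reduction for $R(\p)$, though it is the one genuinely new ingredient beyond the $L(\p)$ argument, is routine.
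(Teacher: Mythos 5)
Your treatment of the case where $L(\p)$ has no cycles is correct and is essentially the argument the paper itself uses: Proposition \ref{adian} is obtained there as the special case of Theorem \ref{teodiscretizables} in which the acyclic graph is discretized by deleting leaf edges one at a time, and your leaf-stripping with $v=(1,1,\ldots,1)$, the computation $\overline{s(k,U_jV_j^{-1})}=1-k$ for both the $k$-th letter of $U_j$ and of $V_j$, and the verification of conditions (1)--(3) of Definition \ref{defiasph} reproduce exactly that proof (your count showing $n\ge m$ is a welcome extra detail).

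The problem is in your reduction of the $R(\p)$ case. Reparametrizing the attaching circle of a $2$-cell by a rotation realizes a cyclic permutation of the relator, and by a reflection a cyclic permutation of its \emph{inverse} $r_j^{-1}=V_jU_j^{-1}$ --- not of the reversed word, which is (a cyclic permutation of) $U_j'(V_j')^{-1}$. These differ in general, so there is usually no cellular homeomorphism $K_\p\to K_{\p'}$ restricting to the identity on the $1$-skeleton, as you claim. The identification you want is nevertheless true, but the isomorphism must invert the $1$-cells: the reversed word is the image of $r_j^{-1}$ under the automorphism $x_i\mapsto x_i^{-1}$ of the free group, so the self-homeomorphism of the wedge of circles reversing each $1$-cell, combined with reflections of the $2$-cells, gives a combinatorial isomorphism $K_\p\cong K_{\p'}$; diagrammatic reducibility is invariant under such isomorphisms (by the definition, or via Theorem \ref{corson}), so your reduction goes through once this is corrected. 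For comparison, the paper does not reverse the relators at all: it handles the right-graph case directly by running the same I-test argument with the vector $v=(-1,-1,\ldots,-1)$, so that last letters rather than first letters produce the distinguished row maxima.
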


Note that the presentation $\p _{\Gamma}$ associated to a LOG $\Gamma$ is Adian, and in this case $L(\p)=T(\Gamma)$ and $R(\p)=I(\Gamma)$. Hence, Proposition \ref{adian} generalizes Theorem \ref{howie}. We will now use our methods to prove a generalization of Proposition \ref{adian}.

Let $\p=\langle x_1, x_2, \ldots, x_n | U_j=V_j, j \in J \rangle$ be an Adian presentation. We label each edge of $L(\p)$ with a family of generators of $\p$ in the following way. Given a generator $x=x_i$, consider those words $U_j$, $V_j$ containing $x$, in which $x$ occurs in the left-most position among all the words $U_k,V_k$. Then label the corresponding $j$-th edges of $L(\p)$ with $x$. An edge may have various labels. In particular, if $x$ occurs simultaneously in $U_j$ and $V_j$ in the left-most position among all the $U_k$ and $V_k$, then the $j$-th edge will have the label $x$ twice. Note that every generator which appears in some relator will be label of at least one edge of $L(\p)$. Note also that the vertices adjacent to an edge are always labels of that edge. These labels are called \emph{trivial} and they are not written in the graphical representation of $L(\p)$. Similarly, we label the edges of the graph $R(\p)$ with the generators that occur at the right-most positions among all the words.

\begin{ej} \label{ejdiscretizable}
Let $\p=\langle x,y,z,t,u,v | xuz^2=yutv, yu=zv, y^3=tx^2, tv^2=zxu, tx=zy \rangle$. Note that both $L(\p)$ and $R(\p)$ have cycles. Figure \ref{izqder} shows the labeled left and right graphs of $\p$.
\begin{figure}[h] 
\begin{center}
\includegraphics[scale=0.5]{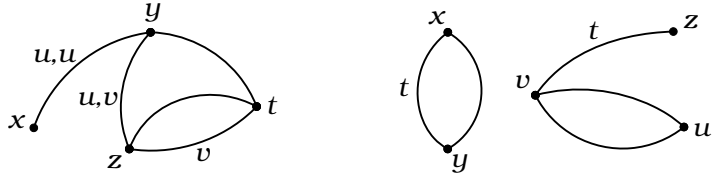}

\caption{The labeled left and right graphs of $\p$.}\label{izqder}
\end{center}
\end{figure}

\end{ej}  

\begin{defi}
Let $G$ be an undirected graph in which each edge is labeled with a family of elements in a set $S$. If $e$ is an edge of $G$ which is labeled with a family that contains an element $s\in S$, which appears only once in total in all the labels of $G$, then we can remove the edge $e$ from $G$. This removal is called a \textit{deletion}. The graph $G$ is called \textit{discretizable} if a graph with no edges can be obtained from $G$ by a sequence of deletions.   
\end{defi}

\begin{obs}
If $\p$ is an Adian presentation and $L(\p)$ has no cycles, then $L(\p)$ is discretizable. Note that if $x=x_i$ is a leaf of $L(\p)$, then the edge incident to $x$ is the unique edge labeled with $x$. Thus we can delete leaf edges one at the time. Analogously, if $R(\p)$ has no cycles, it is discretizable. Example \ref{ejdiscretizable} shows a presentation $\p$ such that $L(\p)$ is discretizable, although it has cycles. Since $x$ is a leaf of $L(\p)$, the edge $xy$ can be deleted. Now, the generator $u$ appears in a unique label, so edge $yz$ can be removed. Then we can delete $yt$, then the edge containing $v$ in its label, and finally the last edge. By Theorem \ref{teodiscretizables} below, this presentation is DR. Note that the right graph is not discretizable: although the edge $zv$ and both edges between $x$ and $y$ can be deleted, the edges between $u$ and $v$ cannot be removed.  
\end{obs}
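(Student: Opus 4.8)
The plan is to prove the \textbf{Remark}: namely that if $\p$ is an Adian presentation with $\len(U_j)=\len(V_j)$ and $L(\p)$ has no cycles (is a forest), then $L(\p)$ is discretizable, and symmetrically for $R(\p)$. The key observation to establish is that a leaf of $L(\p)$ supplies an edge eligible for deletion, so one can peel leaves off a forest one at a time.

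First I would recall the labeling convention. Each edge $e_j$ of $L(\p)$ carries a family of labels: a generator $x=x_i$ labels $e_j$ precisely when $x$ occurs in $U_j$ or $V_j$ at the \emph{left-most} position among all the words $U_k,V_k$ in which $x$ appears. I would emphasize the stated fact that the two vertices adjacent to $e_j$ (the first letters of $U_j$ and $V_j$) are automatically among the labels of $e_j$ (these are the trivial labels), so in particular every vertex of $L(\p)$, viewed as a generator, is a label of every edge incident to it.

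The main step is the leaf argument. Suppose $x=x_i$ is a leaf of $L(\p)$, and let $e_j$ be the unique edge incident to $x$. Since $x$ is a vertex of $L(\p)$, it is a first letter of some $U_k$ or $V_k$, hence the left-most occurrence of $x$ among all words is in first position, and the edge realizing this is exactly an edge incident to $x$, namely $e_j$. I claim $x$ occurs in the total list of labels of $L(\p)$ only once. Indeed, $x$ can label an edge $e_l$ only if $x$ occurs in $U_l$ or $V_l$ at the left-most position over all words; but the left-most occurrence of $x$ is at position $1$, which forces $x$ to be a first letter of $U_l$ or $V_l$, i.e.\ $x$ is a vertex adjacent to $e_l$. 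As $x$ is a leaf, the only such edge is $e_j$, so $x$ appears as a label exactly once (counting: if $x$ were the first letter of both $U_j$ and $V_j$ it would be a loop at $x$, which a forest excludes, so the single occurrence is genuine). Therefore $e_j$ is deletable, and after deleting it we may remove $x$ from consideration; the remaining graph is the forest $L(\p)\smallsetminus\{e_j\}$, still a forest, to which the same argument applies inductively. Since a forest with at least one edge always has a leaf incident to an edge, iterating the deletions removes all edges, so $L(\p)$ is discretizable.

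The symmetric statement for $R(\p)$ follows by the same reasoning applied to the right-most positions and last letters, or simply by passing to the opposite words. I expect the main subtlety, rather than an obstacle, to be the bookkeeping around multiplicities: one must check that a leaf generator really appears \emph{exactly once} in the multiset of all labels (not merely on its own incident edge), which is precisely what the left-most-occurrence convention guarantees, and one must be careful that removing an edge does not create a spurious new occurrence of some previously-unique label elsewhere—it cannot, since deletion only removes labels. The concluding sentence would note that Example \ref{ejdiscretizable} shows the converse fails, exhibiting a discretizable $L(\p)$ with cycles, so discretizability is strictly more general than acyclicity.
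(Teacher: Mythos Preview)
Your proposal is correct and follows exactly the paper's approach: the paper's entire argument for this remark is the single sentence ``if $x=x_i$ is a leaf of $L(\p)$, then the edge incident to $x$ is the unique edge labeled with $x$,'' and you have simply fleshed out the justification (left-most position of a vertex generator is $1$, hence it labels only its incident edges) and handled the induction and multiplicity bookkeeping more carefully than the paper does.
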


\begin{teo} \label{teodiscretizables}
Let $\p=\langle x_1, x_2, \ldots, x_n | U_j=V_j, j \in J \rangle$ be an Adian presentation such that $\len(U_j)=\len(V_j)$ for each $j\in J$. If either $L(\p)$ or $R(\p)$ is discretizable, $\p$ satisfies the I-test.
\end{teo}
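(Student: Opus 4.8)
The goal is to show that discretizability of $L(\p)$ (or, symmetrically, $R(\p)$) produces a vector $v$ and orderings of rows and columns of the weight matrix $M$ witnessing that $\p$ satisfies the I-test. The natural candidate for $v$ is the same one used for LOGs and for the class in Example \ref{clasequecumple}: since all relators are of the form $U_jV_j^{-1}$ with $U_j,V_j$ positive of equal length, $q(U_jV_j^{-1})$ has coordinate sum zero, so $v=(1,1,\ldots,1)$ is orthogonal to every $q(r_j)$. With this choice the weight $\overline{s(k,r_j)}$ of a tail-subword of $r_j$ is just (up to sign conventions) the number of letters following position $k$ in $U_jV_j^{-1}$, counted with the appropriate orientation; in particular, reading $r_j=U_j V_j^{-1}$, the largest weight contributed by an occurrence of $x_i$ in the $U_j$-half is achieved at the \emph{left-most} occurrence of $x_i$ among letters of $U_j$, and similarly the weights coming from the $V_j^{-1}$-half are controlled by left-most occurrences in $V_j$. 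This is exactly the combinatorial content encoded by the labeling of $L(\p)$.

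First I would set up the dictionary between deletions in $L(\p)$ and the row/column orderings required by Definition \ref{defiasph}. A sequence of deletions reducing $L(\p)$ to a graph with no edges visits the edges (equivalently, the relators $r_j$, since the $j$-th edge corresponds to $r_j$) in some order $j_1,j_2,\ldots,j_m$; at step $k$ the edge $e_{j_k}$ carries a label $x_{i_k}$ that occurs only once among all remaining labels. Declare this $x_{i_k}$ to be the $k$-th row. I would then verify the three conditions of "good" for these orderings. Condition (1): $x_{i_k}$ is a label of $e_{j_k}$, and by the explicit description of $M$ with $v=(1,\ldots,1)$ — an occurrence of $x_i$ in $U_j$ in the left-most position contributes a specific nonnegative weight to $M_{i,j}$ — the entry $M_{i_k,j_k}$ is nonempty. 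Condition (2): because $x_{i_k}$ occurs in $U_{j_k}$ (or $V_{j_k}$) at the globally left-most position among all words containing it, the weight it contributes is the maximum over the whole row $\bigcup_j M_{i_k,j}$; this is where the phrase "$x$ occurs in the left-most position among all the words $U_k,V_k$" in the definition of the labeling is used, and I would spell out the weight computation (a left-most occurrence of $x_i$ in $U_j$ of length $\ell$ gives weight $\ell - (\text{position}) + \len(V_j)$ or similar — the precise value is routine once the convention for $s(k,r_j)$ is pinned down). Condition (3): the multiplicity-one requirement on $\lambda_k$ in $\bigcup_{l=k}^m M_{i_k,j_l}$ follows precisely because, at the moment $e_{j_k}$ is deleted, the label $x_{i_k}$ appears only once among the labels of the \emph{remaining} edges $e_{j_k},e_{j_{k+1}},\ldots,e_{j_m}$, and each label-occurrence of $x_{i_k}$ on $e_{j_l}$ corresponds to exactly one maximal-weight entry equal to $\lambda_k$ in $M_{i_k,j_l}$. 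One subtlety to handle carefully: a label can occur twice on the same edge (when $x_i$ is left-most in both $U_j$ and $V_j$), and a label can be trivial (a vertex of the edge); I would note that trivial labels and double labels are still counted correctly by the deletion rule, so the bookkeeping matches.

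Having done the $L(\p)$ case, the $R(\p)$ case is obtained by the mirror symmetry: replacing each word by its reverse interchanges left-most and right-most occurrences and sends $L$ to $R$, so one either repeats the argument with $v=(1,\ldots,1)$ reading $r_j$ from the other end, or (cleaner) observes that $\p$ and its "reversed" presentation have the same standard complex up to the obvious relabeling and that the I-test is invariant under this operation, reducing to the already-treated case. Finally, by Definition \ref{defiasph} the existence of such $v$ and orderings is exactly "$\p$ satisfies the I-test," which is the conclusion; combined with Theorem \ref{maindr} this also gives that $K_\p$ is DR. The main obstacle, as usual with these arguments, is not any deep idea but getting the weight-matrix entries exactly right — in particular fixing the sign/offset conventions in $s(k,r_j)$ and $\overline{\,\cdot\,}$ so that "left-most occurrence" genuinely corresponds to "row maximum," and checking the multiplicity condition in the presence of repeated and trivial labels. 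I expect the proof to be short once that dictionary is made precise.
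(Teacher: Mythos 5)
Your proposal follows the paper's proof essentially verbatim: take $v=(1,1,\ldots,1)$, translate the deletion order of $L(\p)$ into the column/row orderings of the weight matrix, verify the three conditions of Definition \ref{defiasph} using the fact that at each deletion step the chosen generator appears only once among the labels of the not-yet-deleted edges, and treat $R(\p)$ symmetrically (the paper simply uses $v=(-1,-1,\ldots,-1)$). The one detail you defer is exactly the one worth pinning down: the weight of the occurrence of $x_i$ as the $k$-th letter of $U_j$ or of $V_j$ is $1-k$, independent of $j$ (this is where $\len(U_j)=\len(V_j)$ enters), and it is this $j$-independence --- not shared by your tentative formula $\ell-k+\len(V_j)$ --- that makes ``globally left-most occurrence'' coincide with ``row maximum of multiplicity one'' across different columns.
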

\begin{proof}
Suppose $L(\p)$ is discretizable. We prove that $\p$ satisfies the $I$-test for the vector $v=(1,1,\ldots , 1)$. If $x_i$ is the $k$-th letter of $U_j$, the weight $\overline{s(k, U_jV_j^{-1})}=1-k$ is an element of $M_{i,j}$. If $x_i$ is the $k$-th letter of $V_j$, then $\overline{s(2\len(U_j)-k+1, U_jV_j^{-1})}=1-k \in M_{i,j}$. Thus, if $x_i$ appears only once in the labels of $L(\p)$, in the $j$-th edge, then $x_i$ is the $k$-th letter of $U_j$ or of $V_j$ for some $k$ (but not of both), the first $k-1$ letters of $U_j$ and $V_j$ are different from $x_i$, and the first $k$ letters of $U_{j'}$ and $V_{j'}$ are different from $x_i$ for every $j'\neq j$. Then $1-k\in M_{i,j}$ is the maximum of the whole $i$-th row in the weight matrix $M$ and it appears with multiplicity one. The order of deletion of the edges of $L(\p)$ induces an order in the columns and $|J|$ of the rows which proves that $\p$ satisfies the I-test.
If $R(\p)$ is discretizable, $\p$ satisfies our test for $v=(-1,-1,\ldots ,-1)$. 
\end{proof}

\begin{obs}
If $\Gamma$ is a LOG, then $L(\p_\Gamma)$ or $R(\p_\Gamma)$ is discretizable if and only if $\Gamma$ is deforestable. 
\end{obs}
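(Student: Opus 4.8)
The plan is to reduce everything to a single statement about the left graph, read off the \emph{labeled} graph $L(\p_\Gamma)$ explicitly from $\Gamma$, and then transfer to $R(\p_\Gamma)$ via the opposite LOG. Recall that for an edge $e$ of $\Gamma$ with initial vertex $x$, terminal vertex $y$ and label $z$, the relator $r_e=xzy^{-1}z^{-1}$ is the Adian relation $xz=zy$, so $U_e=xz$ and $V_e=zy$; as already noted, the underlying graph of $L(\p_\Gamma)$ is $T(\Gamma)$, its $e$-th edge joining $\iota(e)$ to $\lambda(e)$. Since every word $U_k,V_k$ has length two, a generator $a$ attains its left-most global position either in position $1$ — which happens exactly when $a$ is the initial vertex or the label of some edge, i.e. $a$ is non-isolated in $T(\Gamma)$ — or, failing that, always in position $2$. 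In the first case the edges of $L(\p_\Gamma)$ carrying $a$ are the $e$-th edges with $\iota(e)=a$ or $\lambda(e)=a$, so $a$ occurs only as a \emph{trivial} label; in the second case $a$ is never an initial vertex nor a label (call such $a$ \emph{pure-terminal}), and the edges carrying $a$ are the $e$-th edges with $\tau(e)=a$, where $a$ occurs as a \emph{non-trivial} label. Hence the total multiplicity of the label $a$ in $L(\p_\Gamma)$ equals $\deg_{T(\Gamma)}(a)$ when $a$ is non-isolated in $T(\Gamma)$, and equals $\#\{e:\tau(e)=a\}$ when $a$ is pure-terminal.

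The core of the argument is then an exact dictionary. Deleting the $e$-th edge of $L(\p_\Gamma)$ using a label $a$ of total multiplicity one amounts to one of two situations: (a) $a\in\{\iota(e),\lambda(e)\}$ and $e$ is the only edge of $\Gamma$ with initial vertex $a$ or label $a$, which is exactly condition (IL) of Definition \ref{defo} with $x_j=a$; or (b) $a=\tau(e)$ is pure-terminal and $e$ is the only edge with terminal vertex $a$, which is exactly condition (T) with $x_j=a$. Moreover, after deleting the edges that correspond to $e_1,\dots,e_{j-1}$, the remaining graph is $L(\p_\Gamma)$ restricted to the edges of $\Gamma_{j-1}=\Gamma\smallsetminus\{e_1,\dots,e_{j-1}\}$ carrying the \emph{original} labels, so the multiplicity-one condition at step $j$ is read inside $\Gamma_{j-1}$ precisely as in Definition \ref{defo}. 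Therefore a sequence of deletions turning $L(\p_\Gamma)$ into an edgeless graph, recorded together with the label used at each step, is the same data as an ordering $e_1,\dots,e_m$ of the edges of $\Gamma$ together with an ordering $x_1,\dots,x_m$ of $m$ of its vertices that realizes a deforestation of type IL/T from $\Gamma$ to the discrete sub-LOG obtained by deleting those edges and vertices. (The labels used are pairwise distinct because a label drops to multiplicity zero once used; and going to a discrete — hence edgeless — sub-LOG there is no admissibility constraint on which vertices get deleted.) This proves that $L(\p_\Gamma)$ is discretizable if and only if $\Gamma$ admits a deforestation of type IL/T to a discrete sub-LOG.

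To conclude, I apply this to $\Gamma^{op}$. Reversing each relator $r_e=xzy^{-1}z^{-1}$ of $\p_\Gamma$ (i.e. reading the words $U_e,V_e$ backwards) turns $xz=zy$ into $zx=yz$, which, up to the interchange $U_j\leftrightarrow V_j$ — an operation affecting neither the left nor the right labeled graph — is the presentation $\p_{\Gamma^{op}}$; since $R(\p)$ is built from the right-most letters while $L(\p)$ is built from the left-most ones, this identification gives $R(\p_\Gamma)=L(\p_{\Gamma^{op}})$ as labeled graphs. A deforestation of type TL/I of $\Gamma$ is by definition a deforestation of type IL/T of $\Gamma^{op}$, so the previous paragraph applied to $\Gamma^{op}$ shows that $R(\p_\Gamma)$ is discretizable if and only if $\Gamma$ admits a deforestation of type TL/I to a discrete sub-LOG. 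Combining the two equivalences with the definition of deforestable (an IL/T-deforestation to a discrete subgraph, or a TL/I one) gives the statement.

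I expect the main obstacle to be the bookkeeping in the middle step: correctly separating trivial from non-trivial labels of $L(\p_\Gamma)$, tracking total label multiplicities as edges are removed, and verifying that conditions (IL) and (T) line up verbatim with the two ways a multiplicity-one label can arise — including the degenerate cases in which $\bar e$ is a loop of $T(\Gamma)$ or $\Gamma$ has isolated vertices, which are excluded from being ``used'' in a deletion exactly as they are excluded by (IL) and (T).
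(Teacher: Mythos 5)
Your proposal is correct: the paper states this remark without proof, and your argument — the exact dictionary between deletions in the labeled graph $L(\p_\Gamma)$ (trivial labels giving condition (IL), non-trivial ones giving (T), with multiplicity one matching the counts in $\Gamma_{j-1}$ because labels are fixed once and for all) together with the identification $R(\p_\Gamma)=L(\p_{\Gamma^{op}})$ — is precisely the intended unwinding of the definitions. The loop/isolated-vertex degeneracies you flag are indeed handled automatically, since a loop of $T(\Gamma)$ contributes two label occurrences just as it contributes two to the count in (IL).
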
 

\begin{obs}
	These ideas can be easily extended to study diagrammatic reducibility of more general presentations $\p=\langle x_1,x_2,\ldots, x_n | r_1,r_2,\ldots ,r_m \rangle$ such that the total exponent of each relator $r_j$ is $0$ and each proper final segment $(r_j)^{(k)}$ has negative total exponent. In this case, define $L(\p)$ to be the undirected graph whose vertices are the first and last letters of the relators $r_j$ and with an edge (the $j$-th edge) from the first to the last letter of $r_j$ for each $1\le j\le m$. The labels of the edges in $L(\p)$ are the following. For each generator $x_i$ consider all the total exponents of the words $s(k,r_j)$ with $1\le j\le m$ and $k\in \occ(x_i,r_j)$ and label the $j$-th edge of $L(\p)$ with $x_i$ if the maximum of those exponents is attained at $r_j$, once for each time this number is attained at $r_j$. Then, if $L(\p)$ is discretizable, $\p$ is aspherical.
\end{obs}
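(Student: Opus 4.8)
The plan is to mimic the proof of Theorem~\ref{teodiscretizables}: show that $\p$ satisfies the I-test for the vector $v=(1,1,\ldots,1)$, with column and row orderings read off from a discretization of $L(\p)$, and then invoke Theorem~\ref{maindr}. Since each $r_j$ has total exponent $0$, the vector $v$ is orthogonal to every $q(r_j)$, so the weight of a word relative to $v$ is simply its total exponent; thus the entry $M_{i,j}$ of the weight matrix is the family $\{exp(s(k,r_j))\}_{k\in\occ(x_i,r_j)}$.

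The first step is to read off the signs forced by the hypothesis. If every proper final segment $(r_j)^{(k)}$ has negative total exponent, then (using $exp(r_j)=0$) the first letter of $r_j$ must occur with exponent $+1$ and the last letter with exponent $-1$; hence the vertices of $L(\p)$ are exactly the generators occurring in first or last position in some relator. Next I would evaluate $exp(s(k,r_j))$ for $k\in\occ(x_i,r_j)$: it equals $exp(r_j)=0$ when $k=1$; it equals the exponent of the empty word, i.e. $0$, when $k=\len(r_j)$ (the last letter being negative); and it is the total exponent of a nonempty proper final segment of $r_j$, hence strictly negative, when $1<k<\len(r_j)$. Therefore every entry of $M$ is $\le 0$, and the value $0$ appears in $M_{i,j}$ once for each time $x_i$ occupies the first or last position of $r_j$, and never otherwise. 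Consequently the maximum $\mu_i$ of the $i$-th row $\bigcup_j M_{i,j}$ is $0$ when $x_i$ is a vertex of $L(\p)$ and is negative otherwise; comparing with the labeling rule, the $j$-th edge of $L(\p)$ carries the label $x_i$ with multiplicity $\nu$ precisely when the value $\mu_i$ occurs in $M_{i,j}$ with multiplicity $\nu$ (the trivial, i.e. endpoint, labels being exactly those produced by $\mu_i=0$).

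With this dictionary in hand, I would translate discretizability directly. A discretization of $L(\p)$ is a sequence of deletions removing the $m$ edges in some order $e_{j_1},e_{j_2},\ldots,e_{j_m}$, where at the $k$-th step some generator $x_{i_k}$ occurs exactly once among the labels of the remaining edges $e_{j_k},\ldots,e_{j_m}$, namely once on $e_{j_k}$. By the dictionary this says that $M_{i_k,j_k}$ is nonempty, that its maximum equals $\mu_{i_k}=\max\bigcup_j M_{i_k,j}$, and that $\mu_{i_k}$ has multiplicity one in $\bigcup_{l=k}^m M_{i_k,j_l}$ — exactly conditions (1)--(3) of Definition~\ref{defiasph} for the column order $j_1,\ldots,j_m$ and the row order $i_1,\ldots,i_m$. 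The indices $i_k$ are pairwise distinct: if $x_{i_k}=x_{i_{k'}}$ with $k<k'$, then $x_{i_{k'}}$ would label $e_{j_{k'}}$, an edge still present at step $k$, contradicting the choice of $x_{i_k}$. Hence $i_1,\ldots,i_m$ orders $m$ distinct rows, in particular $n\ge m$, so $M$ is good, $\p$ satisfies the I-test, and $K_\p$ is DR by Theorem~\ref{maindr} and in particular aspherical.

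I expect the only real obstacle to be the hypothesis on final segments: it is what makes the maximum of each row of the weight matrix ``visible'' in $L(\p)$ as an edge label — equivalently, what forces the weights $exp(s(k,r_j))$ to be $\le 0$ with the zeroes located exactly at the endpoints of the relators — and hence what makes the combinatorics of discretization on $L(\p)$ coincide with the conditions defining a good matrix. The remaining steps, namely the sign computation and the multiplicity bookkeeping, are routine and parallel the proof of Theorem~\ref{teodiscretizables} essentially verbatim, with ``$1-k$'' replaced throughout by ``$exp(s(k,r_j))$''.
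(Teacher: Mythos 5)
Your proof is correct and follows exactly the route the paper intends: the remark is stated without proof as an ``easy extension'' of Theorem \ref{teodiscretizables}, and your argument is precisely that proof transported to the general setting, with $v=(1,1,\ldots,1)$, weights equal to total exponents of the words $s(k,r_j)$, and the discretization order of $L(\p)$ furnishing the column and row orderings that make the weight matrix good. The sign analysis (zero weights exactly at first and last positions, negative weights at interior occurrences) and the multiplicity bookkeeping identifying edge labels with attainments of row maxima are exactly the details the authors leave implicit, and your conclusion (DR, hence asphericity) matches and even strengthens the stated claim.
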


\section{Ivanov's asphericity conjecture}

In \cite{Iva} Ivanov proposed the following conjecture.

\begin{conj}[Ivanov] \label{conjivanov}
Let $\p=\langle x_1,x_2,\ldots, x_n | r_1,r_2, \ldots, r_m \rangle$ be an aspherical presentation and let $\mathcal{Q}=\langle x_1,x_2,\ldots, x_n,x | r_1,r_2, \ldots, r_m, r \rangle$ be such that:
\begin{itemize}
\item The total exponent of $x$ in $r$ is non-zero,
\item The group $H$ presented by $\p$ naturally embeds in the group $G$ presented by $\mathcal{Q}$, and
\item $G$ is torsion-free.
\end{itemize}
Then $\mathcal{Q}$ is aspherical.
\end{conj}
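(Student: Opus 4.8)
The plan is to read off asphericity of $\mathcal{Q}$ from Theorem \ref{ecuaciones} and to exploit that the new generator $x$ occurs in no relator other than $r$. By Theorem \ref{ecuaciones}, $\mathcal{Q}$ is aspherical if and only if the only finitely supported integral solution of its system $\{E_{i,g}\}$ is trivial; the unknowns are $n^j_g$ for $1\le j\le m$ together with the unknowns $n^r_g$ associated with the new relator $r$, with $g$ ranging over $G$. I would separate the equations indexed by the new generator $x$ from those indexed by the old generators $x_1,\dots,x_n$ and treat them in that order.

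First I would isolate the equations $E_{x,g}$. Since $x$ does not occur in any $r_j$ with $j\le m$, Remark \ref{terminos} shows that these equations involve only the unknowns $n^r_g$ and read
\[
\sum_{k\in\occ(x,r)}\epsilon_{k,r}\,n^r_{g\,s(k,r)}=0\qquad(g\in G).
\]
Encoding $\{n^r_g\}$ as $\eta=\sum_g n^r_g\,g\in\Z G$ and putting $D=\sum_{k\in\occ(x,r)}\epsilon_{k,r}\,s(k,r)^{-1}\in\Z G$, a direct computation of coefficients shows that this whole subsystem is equivalent to the single identity $\eta D=0$ in the integral group ring $\Z G$. The augmentation $\Z G\to\Z$ sends $D$ to $\sum_{k\in\occ(x,r)}\epsilon_{k,r}$, which is exactly the total exponent of $x$ in $r$; the first hypothesis makes this nonzero, so $D\neq 0$. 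Thus, as soon as $\Z G$ has no zero divisors, $\eta D=0$ with $D\neq 0$ forces $\eta=0$, that is, $n^r_g=0$ for every $g$.

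Granting $n^r\equiv 0$, I would finish with the equations $E_{i,g}$ for the old generators. With the $n^r$-terms gone, each of these relates only unknowns $n^j_{g\,s(k,r_j)}$ whose shift $s(k,r_j)$ is a subword of a relator of $\p$, hence lies in $H$. Consequently the surviving system decomposes along the left cosets of $H$ in $G$: on a fixed coset $gH$, the substitution $h\mapsto gh$ turns it into a verbatim copy of the system $\{E_{i,h}\}_{h\in H}$ attached to $\p$. This is precisely where the embedding $H\hookrightarrow G$ is essential—it guarantees that distinct elements of $H$ remain distinct in $G$, so no two unknowns are accidentally identified and a finitely supported solution over $G$ restricts on each coset to a finitely supported solution over $H$. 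Asphericity of $\p$ and Theorem \ref{ecuaciones} then force each coset-solution, and hence every $n^j_g$, to vanish. The only finitely supported solution of the full system is therefore trivial, and $\mathcal{Q}$ is aspherical.

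The hard part is the single step that needs $\Z G$ to have no zero divisors: for the torsion-free group $G$ this is exactly Kaplansky's zero-divisor conjecture, which is open. So the argument does not settle the conjecture outright but reduces it cleanly to Kaplansky's problem, recovering from the asphericity side Ivanov's remark that a counterexample would produce a torsion-free group whose integral group ring has zero divisors. To obtain unconditional statements one replaces the full zero-divisor conjecture by the weaker requirement that the particular element $D$ be a non-zero-divisor in $\Z G$; this is where indicability enters, since the group ring of a locally indicable group has no zero divisors and, more economically, one only needs indicability of the finitely many subgroups generated by the subwords $s(k,r)$ that support $D$. That is the refinement behind the provable Theorems \ref{ivanovli} and \ref{ivanovp}, while the conjecture as stated stays beyond reach for exactly the reason Kaplansky's problem does.
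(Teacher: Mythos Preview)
The statement is Ivanov's open conjecture; the paper does not prove it and explicitly treats it as unsolved. Your proposal is honest about this: you do not claim a proof, only a reduction to Kaplansky's zero-divisor conjecture. That reduction is correct. The group-ring reformulation $\eta D=0$ of the $E_{x,g}$-equations is accurate, the augmentation argument showing $D\neq 0$ is exactly the role of the first hypothesis, and your second step (once all $n^r_g$ vanish, asphericity of $\mathcal{Q}$ follows from that of $\p$ and the embedding $H\hookrightarrow G$) is precisely the content of Lemma~\ref{lemaivanov} in the paper.

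What you have therefore written is not new: it is the contrapositive of Ivanov's own Theorem~1 (a counterexample to the conjecture yields zero divisors in $\Z G$), rederived through the paper's Theorem~\ref{ecuaciones} and Lemma~\ref{lemaivanov}. The paper does not go further either---its unconditional contributions are Theorems~\ref{ivanovli} and~\ref{ivanovp}, which replace the full Kaplansky hypothesis by verifiable indicability conditions on finitely many explicitly described subgroups, or by a perturbation of $r$. Your closing paragraph gestures at this but slightly misstates Theorem~\ref{ivanovli}: the relevant subgroups are generated by the products $s^{-1}t$ for $s,t$ among the $s(k,r)$, not by the $s(k,r)$ themselves. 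In summary, your analysis is sound but recovers exactly the known state of affairs; the conjecture remains open for the reason you identify.
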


It is unknown whether the first hypothesis implies the second one (this is in fact the statement of the Kervaire Conjecture).
Ivanov's conjecture is related to the Kaplansky conjecture on zero divisors of a group ring. If Conjecture \ref{conjivanov} is false, there is a torsion free group $G$ such that $\mathbb{Z}G$ has zero divisors (\cite[Theorem 1]{Iva}). In this section we will prove two results related to Conjecture \ref{conjivanov}.

\begin{lema} \label{lemaivanov}
Let $\p=\langle x_1,x_2,\ldots, x_n | r_1,r_2, \ldots, r_m \rangle$ be an aspherical presentation and let $\mathcal{Q}=\langle x_1,x_2,\ldots, x_n,x | r_1,r_2, \ldots, r_m, r \rangle$ be such that the group $H$ presented by $\p$ naturally embeds in the group $G$ presented by $\mathcal{Q}$. Moreover, suppose that for any solution $\{n_g^j\}_{1\le j\le m, g\in G}\cup \{n_g\}_{g\in G}$ of the system $\{E_{i,g}\}_{1\le i\le n, g\in G}\cup \{E_g\}_{g\in G}$ with finite support, we have that $n_g=0$ for every $g\in G$. Here $n_g$ denotes the coefficients corresponding to the relator $r$ and $E_g$ the equations associated to the generator $x$. Then $\mathcal{Q}$ is aspherical.
\end{lema}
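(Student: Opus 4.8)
The plan is to invoke Theorem \ref{ecuaciones} for $\mathcal{Q}$: since $\mathcal{Q}$ has generators $x_1,\dots,x_n,x$ and relators $r_1,\dots,r_m,r$, it is aspherical if and only if every finite-support solution of $\{E_{i,g}\}_{1\le i\le n,\,g\in G}\cup\{E_g\}_{g\in G}$ is trivial. By hypothesis any such solution already has $n_g=0$ for all $g\in G$, so it remains only to deduce that $n_g^j=0$ for all $1\le j\le m$ and all $g\in G$.

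Once $n_g\equiv 0$, each equation $E_{i,g}$ with $1\le i\le n$ loses the contribution of the new relator $r$ and reads $\sum_{1\le j\le m}\sum_{k\in\occ(x_i,r_j)}\epsilon_{k,r_j}\,n_{g\,s(k,r_j)}^j=0$. Now $r_1,\dots,r_m$ are words in $x_1,\dots,x_n$ only, so every subword $s(k,r_j)$ represents an element of the subgroup of $G$ generated by $x_1,\dots,x_n$, which under the given embedding is identified with $H$. Consequently, for $g$ in a fixed coset $g_0H$ all the indices $g\,s(k,r_j)$ again lie in $g_0H$, and conversely the unknown $n_{g'}^j$ appears in $E_{i,g}$ only when $g\in g'H$. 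Hence the system $\{E_{i,g}\}_{1\le i\le n,\,g\in G}$ is the disjoint union, over the cosets $C=g_0H$ of $H$ in $G$, of independent subsystems $\Sigma_C$ in the unknowns $\{n_g^j: 1\le j\le m,\ g\in C\}$.

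I would then reindex each $\Sigma_C$ to recognise it as the system of Theorem \ref{ecuaciones} for $\p$. Fix $g_0$ and put $m_h^j:=n_{g_0h}^j$ for $h\in H$; since $h\mapsto g_0h$ is a bijection $H\to C$ and $g_0h\,s(k,r_j)=g_0\,(h\,s(k,r_j))$ (the product $h\,s(k,r_j)$ being computed in $H$, using that $H\hookrightarrow G$ is a homomorphism), the subsystem $\Sigma_C$ becomes exactly $\{E_{i,h}\}_{1\le i\le n,\,h\in H}$ for $\p$ in the unknowns $\{m_h^j\}$. The original solution has finite support, hence so does $\{m_h^j\}$; as $\p$ is aspherical, Theorem \ref{ecuaciones} gives $m_h^j=0$ for all $h,j$, that is, $n_g^j=0$ for every $g\in C$. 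Since $C$ is arbitrary, $n_g^j=0$ for all $j$ and all $g\in G$, so the solution we started with is trivial, and $\mathcal{Q}$ is aspherical by Theorem \ref{ecuaciones}.

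The one point requiring care is that the argument uses the \emph{injectivity} of the natural map $H\to G$, not merely its existence: injectivity is what guarantees that the reindexed subsystem $\Sigma_C$ genuinely coincides with the $\p$-system, rather than being a quotient of it in which a priori distinct unknowns $m_h^j$ get identified, so that asphericity of $\p$ can be applied. Apart from this, the proof is purely combinatorial bookkeeping: checking that indices do not escape a coset and that distinct cosets share no unknowns.
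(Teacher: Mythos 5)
Your proof is correct and follows essentially the same route as the paper: after using the hypothesis to kill the $n_g$, you pull the remaining solution back along the embedding $H\hookrightarrow G$ (your reindexing $m_h^j=n_{g_0h}^j$ is exactly the paper's $\widetilde{n}_h^j=n_{k\iota(h)}^j$), check finite support, and apply Theorem \ref{ecuaciones} for $\p$ to conclude triviality coset by coset. The only cosmetic difference is that you make the coset decomposition explicit, whereas the paper argues pointwise for an arbitrary $k\in G$; also, injectivity is really only needed to guarantee finite support of the pulled-back family, not to avoid identifying unknowns.
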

\begin{proof}
Let $\{n_g^j\}_{1\le j\le m, g\in G}\cup \{n_g\}_{g\in G}$ be a solution of the system $\{E_{i,g}\}_{1\le i\le n, g\in G}\cup \{E_g\}_{g\in G}$ with finite support. We want to show that all the $n_g^j$ and $n_g$ are zero. By hypothesis $n_g=0$ for every $g\in G$. Let $k\in G$. Let $\iota :H\to G$ be the natural embedding. For each $1\le j\le m$ and each $h\in H$, define $\widetilde{n}_h^j=n_{k\iota(h)}^j$. Since $n_g=0$ for every $g\in G$, the equation $\widetilde{E}_{i,h}$ associated to the presentation $\p$, reduces to equation $E_{i,k\iota(h)}$ associated to $\mathcal{Q}$. Therefore, $\{\widetilde{n}_h^j\}_{1\le j\le m, h\in H}$ is a solution of the system $\{\widetilde{E}_{i,h}\}_{1\le i\le n, h\in H}$. Moreover, since $n_g^j=0$ except for finitely many $(g,j)$ and the kernel of $\iota$ is finite (trivial, in fact), only finitely many $\widetilde{n}_h^j$ are non-zero. Theorem \ref{ecuaciones} implies then that $\widetilde{n}_h^j=0$ for every $h\in H$ and $1\le j\le m$. In particular, $n_k^j=\widetilde{n}_1^j=0$ for every $j$. Since $k\in G$ is arbitrary, by Theorem \ref{ecuaciones}, $\mathcal{Q}$ is aspherical.
\end{proof}   

Recall that a group $G$ is said to have the \textit{unique product property (upp)} if for any two non-empty finite subsets $A,B\subseteq G$, there exists $g\in G$ such that $gA\cap B$ has exactly one element. It is easy to see that if $G$ has the upp, then it is torsion-free and the group ring $\mathbb{Z}G$ has no zero divisors (see \cite{st} for examples of torsion-free groups without the upp). The result of Ivanov \cite[Theorem 1]{Iva} mentioned above implies then that his conjecture is true if $G$ has the upp. We give next a direct proof of this result using Lemma \ref{lemaivanov}.

\begin{prop} \label{upp}
Ivanov's conjecture is true if the group $G$ presented by $\mathcal{Q}$ has the unique product property.
\end{prop}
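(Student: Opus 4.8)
The plan is to deduce this from Lemma~\ref{lemaivanov}. Since Ivanov's conjecture assumes that $\mathcal{P}$ is aspherical and that $H$ embeds in $G$, that lemma reduces the problem to proving: every family $\{n_g^j\}_{1\le j\le m,\,g\in G}\cup\{n_g\}_{g\in G}$ of integers with finite support that solves the combined system $\{E_{i,g}\}\cup\{E_g\}$ satisfies $n_g=0$ for all $g\in G$. So I would fix such a family and force the $n_g$ to vanish, using only the equations $E_g$ attached to the new generator $x$ (a solution of the whole system certainly satisfies these).

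First I would simplify $E_g$. Since $x$ does not occur in any of $r_1,\ldots,r_m$, the equation $E_g$ only involves the coefficients $n_h$ of the new relator $r$, and reads $\sum_{k\in\occ(x,r)}\epsilon_{k,r}\,n_{gs(k,r)}=0$ for every $g\in G$. The key step is then to package all of these into a single identity in the integral group ring: set $N=\sum_{g\in G}n_g\,g\in\mathbb{Z}G$ (a finite sum, since the support is finite) and $D=\sum_{k\in\occ(x,r)}\epsilon_{k,r}\,s(k,r)^{-1}\in\mathbb{Z}G$. A direct computation of coefficients shows that the coefficient of $g$ in the product $N\cdot D$ equals the left-hand side of $E_g$; hence the family $\{E_g\}_{g\in G}$ is equivalent to the single equation $N\cdot D=0$ in $\mathbb{Z}G$. (In fact $D$ is the Fox derivative $\partial r/\partial x$ evaluated in $\mathbb{Z}G$, although we do not need this.)

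Now each hypothesis disposes of one factor. Applying the augmentation $\varepsilon:\mathbb{Z}G\to\mathbb{Z}$ that sends every element of $G$ to $1$, one gets $\varepsilon(D)=\sum_{k\in\occ(x,r)}\epsilon_{k,r}$, which is exactly the total exponent of $x$ in $r$; this is nonzero by hypothesis, so $D\neq 0$. On the other hand, since $G$ has the unique product property, $\mathbb{Z}G$ has no zero divisors, as recalled above. From $N\cdot D=0$ and $D\neq 0$ we conclude $N=0$, that is, $n_g=0$ for all $g\in G$, and Lemma~\ref{lemaivanov} then yields that $\mathcal{Q}$ is aspherical.

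I do not expect a serious obstacle here: once Lemma~\ref{lemaivanov} is invoked, the whole question becomes a statement about a single product in $\mathbb{Z}G$, after which the nonzero-exponent hypothesis and the upp hypothesis do precisely the two things needed (keeping $D$ away from zero and making $\mathbb{Z}G$ a domain). The one point requiring care is the bookkeeping in the second step — checking that the multiplication lands on the correct side and that it is $s(k,r)^{-1}$, not $s(k,r)$, that appears in $D$, so that the coefficient of $g$ in $N\cdot D$ really is $\sum_k\epsilon_{k,r}n_{gs(k,r)}$. One could also argue more directly, selecting via the upp (applied to the support of $\{n_g\}$ and the finite set $\{s(k,r):k\in\occ(x,r)\}$) a group element at which some $E_g$ has exactly one nonzero term; but possible coincidences among the $s(k,r)$ in $G$ make that route essentially equivalent to checking $D\neq 0$, so the group-ring formulation is the cleaner one.
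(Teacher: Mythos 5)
Your proposal is correct, and it reaches the conclusion by a slightly different final mechanism than the paper, though the overall strategy coincides: both reduce via Lemma \ref{lemaivanov} to showing $n_g=0$ for all $g\in G$, using only the equations $E_g$, which read $\sum_{k\in \occ(x,r)}\epsilon_{k,r}\,n_{gs(k,r)}=0$ precisely because $x$ does not occur in $r_1,\dots,r_m$. The paper then applies the unique product property directly to the equations: it collects coincident subindices, rewriting $E_g$ as $\sum_{i=1}^p a_i n_{gs_i}=0$ with the $s_i$ pairwise distinct and all $a_i\neq 0$ (this is nontrivial because the collected coefficients sum to the total exponent of $x$ in $r$), and then, taking $A=\{s_1,\dots,s_p\}$ and $B$ the support of $\{n_g\}$, chooses $g$ with $gA\cap B$ a singleton to obtain a one-term equation and a contradiction. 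You instead package all the $E_g$ into the single identity $N\cdot D=0$ in $\Z G$, with $N=\sum_g n_g\,g$ and $D=\sum_{k\in\occ(x,r)}\epsilon_{k,r}\,s(k,r)^{-1}$ (your coefficient check is correct), note that the augmentation of $D$ is the total exponent of $x$ in $r$, hence $D\neq 0$, and quote the fact---stated as known just before the proposition---that the upp forces $\Z G$ to have no zero divisors. The two arguments are the same mechanism in different packaging: your augmentation step plays exactly the role of the paper's coefficient-collecting step (both handle possible coincidences among the $s(k,r)$ in $G$), and the zero-divisor fact you invoke is itself proved by the kind of unique-product selection the paper carries out by hand. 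Your version is more concise and makes the connection with the zero-divisor theme behind Ivanov's conjecture transparent; the paper's version is more self-contained, applying the upp directly at the level of the linear equations without passing through the ring-theoretic statement, which fits the authors' stated aim of giving a direct proof rather than one mediated by the no-zero-divisors implication.
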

\begin{proof}
Let $\{n_g^j\}_{1\le j\le m, g\in G}\cup \{n_g\}_{g\in G}$ be a solution with finite support of the system $\{E_{i,g}\}_{1\le i\le n, g\in G}\cup \{E_g\}_{g\in G}$ associated to $\mathcal{Q}$. Since the generator $x$ does not appear in the relators $r_j$, the equation $E_{g}$ is $$\sum\limits_{k\in \occ(x,r)} \epsilon_{k,r}n_{gs(k,r)}=0,$$ where $\occ(x,r)$ denotes the set of occurrences of $x$ in $r$. Since the total exponent $\sum{\epsilon_{k,r}}$ of $x$ in $r$ is non-zero, the equation above is not trivial, and it can be rewritten as 
\begin{equation}
\sum\limits_{i=1}^p a_in_{gs_i}=0,
\label{eq2}
\end{equation}
 where $p\ge 1$, $0\neq a_i\in \mathbb{Z}$ for every $1\le i\le p$, and $s_1, s_2, \ldots, s_p$ are different elements of $G$. Let $A=\{s_1,s_2,\ldots, s_p\}$ and $B=\{g\in G| n_g\neq 0\}$. Suppose $B$ is non-empty. By hypothesis, there exists $g\in G$ such that $gA\cap B$ has a unique element, say $h\in G$. Then equation (\ref{eq2}) above says that $n_h=0$, a contradiction. Thus, $B=\emptyset$ and Lemma \ref{lemaivanov} implies that $\mathcal{Q}$ is aspherical.
\end{proof}

As explained in \cite{Iva}, Howie's results show that Conjecture \ref{conjivanov} is true when $G$ (equivalently $H$) is locally indicable, that is, if every nontrivial finitely generated subgroup of $G$ is indicable (this follows also from Proposition \ref{upp}). The next result improves this. Concretely, if $x$ occurs $k$ times in $r$, then we only have to check that certain $2^k-\frac{k(k+1)}{2}-1$ subgroups of $G$ are indicable. In the case of $4$ occurrences, for instance, the asphericity of $\mathcal{Q}$ would follow from the indicability of a subgroup of $G$ generated by $3$ elements and four subgroups generated by $2$ elements each. Recall that a group $G$ is said to be left-orderable if there exists a total order $\le$ in the underlying set of $G$ such that $h\le h'$ implies $gh\le gh'$ for every $h,h',g\in G$.

\begin{teo} \label{ivanovli}
Under the hypothesis of Conjecture \ref{conjivanov}, suppose that for every subset $S\subseteq \{s(k,r)\in G | k\in \occ(x,r)\}$ with more than two elements, the subgroup $G_S$ of $G$ generated by the elements $s^{-1}t$ for $s,t\in S$, is indicable, or, more generally, there exists a nontrivial homomorphism $G_S\to L_S$ to a left-orderable group $L_S$. Then $\mathcal{Q}$ is aspherical. 
\end{teo}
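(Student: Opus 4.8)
The plan is to reduce the statement to Lemma \ref{lemaivanov}: it suffices to show that any finitely-supported solution $\{n_g^j\}\cup\{n_g\}$ of the combined system has $n_g=0$ for all $g\in G$. Suppose not, and let $B=\{g\in G\mid n_g\neq 0\}$, a non-empty finite set. Write $\occ(x,r)=\{k_1<\dots<k_p\}$ and $s_i=s(k_i,r)\in G$, so the equation $E_g$ for the generator $x$ reads $\sum_{i=1}^p \epsilon_i\, n_{g s_i}=0$ with all $\epsilon_i=\pm1$ and $\sum_i\epsilon_i\neq 0$. The idea is to choose, among all $g\in G$ with $g\{s_1,\dots,s_p\}\cap B\neq\emptyset$, an element that is \emph{extremal} with respect to a left-ordering coming from one of the hypothesised homomorphisms $G_S\to L_S$, so that the equation $E_g$ forces a contradiction.

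First I would set up the combinatorics. For $h\in B$ and $1\le i\le p$ let $g=h s_i^{-1}$; the term $n_{g s_i}=n_h$ then appears in $E_g$, and the other subindices are $g s_l = h s_i^{-1}s_l$ for $l\neq i$. So $E_g$ says $\epsilon_i n_h=-\sum_{l\neq i}\epsilon_l n_{h s_i^{-1}s_l}$, and if the right-hand side vanishes we get the contradiction $n_h=0$. We need to pick $h$ and $i$ so that none of the elements $h s_i^{-1}s_l$ ($l\neq i$) lies in $B$; equivalently, so that $g\{s_1,\dots,s_p\}\cap B=\{h\}$. Mimicking the unique-product argument of Proposition \ref{upp}, but with left-orderings in place of the upp, is the natural route. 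The subtlety — and this is where the precise form of the hypothesis enters — is that we do not have a single left-ordering on all of $G$; we only have homomorphisms $G_S\to L_S$ for subsets $S$ of size $\ge 3$, where $G_S=\langle s^{-1}t : s,t\in S\rangle$.

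The key step is therefore a descent argument on the size of the relevant subset of $\{s_1,\dots,s_p\}$. Fix any $h_0\in B$. Among all pairs $(g,i)$ with $g s_i=h_0$ (so $g$ ranges over the finitely many $h_0 s_i^{-1}$), consider the sets $T_g=\{l : g s_l\in B\}$; each contains the index with $g s_i = h_0$. Pick $g$ minimising $|T_g|$. If $|T_g|=1$ we are done as above. If $|T_g|=2$, say $T_g=\{i,l\}$, then $E_g$ gives $\epsilon_i n_{h_0}+\epsilon_l n_{g s_l}=0$; since $g s_l\in B$ we also have $g s_l s_l^{-1}=g s_i s_i^{-1}\cdot(\text{...})$ — here one runs the same minimality at $g s_l$, and a short argument (exactly as in the "two occurrences" base case handled by Howie's theorem / local indicability, which is why the hypothesis excludes $|S|\le 2$) shows the two-element case cannot persist for \emph{all} choices, or else $H$ itself would fail to be aspherical or $G$ would have torsion — so we may assume $|T_g|=p'\ge 3$. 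Set $S=\{g s_l : l\in T_g\}\subseteq B$, a set of $\ge 3$ distinct elements of $G$. Then $g^{-1}(g s_l)=s_l$, so $S = g\cdot\{s_l : l\in T_g\}$ and the "difference set" $\{a^{-1}b : a,b\in S\}=\{s_l^{-1}s_{l'}\}$ generates exactly $G_S$ (up to the ambiguity of which $S$; here one takes $S$ to be the subset of the $s_l$'s, not of $B$, matching the statement). By hypothesis there is a nontrivial homomorphism $\phi\colon G_S\to L_S$ with $L_S$ left-orderable; fix a left-invariant total order $\preceq$ on $L_S$. Since $\phi$ is nontrivial, the finitely many elements $\phi(s_l^{-1}s_{l'})$ are not all trivial, so the finite set $\{\phi(s_l)\}_{l\in T_g}\subseteq L_S$ — well-defined up to a common left-translate, hence its internal order is well-defined — has a strict maximum at some index $l_0$ unless all $\phi(s_l)$ coincide; if they all coincide, replace $T_g$ by the sub-collection achieving the max of $\phi$ and repeat, strictly decreasing $|T_g|$, which cannot go on forever and eventually exposes a strict max or drops us to the $\le 2$ case. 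With a strict maximum $l_0$, put $g'= h_0' s_{l_0}^{-1}$ where $h_0'=g s_{l_0}\in B$; then for $l\in T_g\setminus\{l_0\}$, $g' s_l = h_0' s_{l_0}^{-1} s_l = g s_{l_0} s_{l_0}^{-1} s_l = g s_l\in B$? — no: one must instead use left-invariance of $\preceq$ to show $\phi(g' s_l)\prec \phi(h_0')$ for $l\neq l_0$, hence those $g' s_l$ cannot lie in the $\phi$-maximal "layer" of $B$; iterating within layers makes $|T_{g'}|<|T_g|$, contradicting minimality. Thus $B=\emptyset$, and Lemma \ref{lemaivanov} finishes the proof.

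The main obstacle I anticipate is making the layered extremality argument fully precise: because there is no global order on $G$, one has to organise $B$ into $\phi$-cosets/layers for the appropriate $S$, argue that the relevant subindices drop strictly in the $\preceq$-order (using left-invariance, which is exactly what makes the $\{s_l\}$ translate by $g$ behave well), and check that the recursion on $|T_g|$ — peeling off maximal layers and passing to smaller difference-sets $S'\subseteq S$ with their own homomorphisms — terminates without ever re-entering a case the hypothesis does not cover. The exclusion of subsets with $\le 2$ elements in the hypothesis is precisely what lets the recursion bottom out: the two-point case corresponds to a relator whose associated equation is $\epsilon_i n_{h}+\epsilon_l n_{h'}=0$ and is handled by the asphericity of $\p$ together with Howie's theorem on locally indicable groups (or directly, since a persistent two-point collision would build a non-trivial spherical diagram in $K_\p$). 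Getting the bookkeeping of these layers and the termination argument right is the crux; everything else is the routine translation between solutions of the equations $E_{i,g}$, $E_g$ and the hypotheses of Lemma \ref{lemaivanov}.
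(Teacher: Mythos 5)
Your overall strategy---reducing to Lemma \ref{lemaivanov} and then running an extremal argument with the homomorphisms $G_S\to L_S$, recursing on the subset of the $s_i$ that attains a maximum---is the same in spirit as the paper's proof, but the central step is missing. The paper's argument is a nested construction: fix $g_0$ with $n_{g_0}\neq 0$, set $G_1=\langle s_i^{-1}s_j\rangle$, take a nontrivial $\varphi_1:G_1\to\Z$ (or to a left-orderable group), choose $g_1\in G_1$ minimizing $\varphi_1$ over $\{g\in G_1:\ n_{g_0g}\neq 0\}$, pass to the subset of the $s_i$ on which $\varphi_1$ is maximal (a \emph{proper} subset, since $\varphi_1$ is nontrivial), form the smaller subgroup $G_2$ with a new homomorphism $\varphi_2$, and iterate, producing $G_1\supseteq G_2\supseteq\cdots\supseteq G_l$ and $g_1,\ldots,g_l$ until the maximum is attained by a single $s_1$; one then evaluates the single equation $E_g$ at $g=g_0g_1\cdots g_ls_1^{-1}$ and uses the accumulated min/max inequalities to kill the terms with $i>p_2$, then $i>p_3$, and so on, leaving $a_1n_{g_0g_1\cdots g_l}=0$, a contradiction. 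In your sketch the analogous step is exactly where you write ``--- no:'' and fall back on an unspecified ``layer'' argument: you never exhibit an equation with a single nonzero term, and the claimed descent on $|T_g|$ is not sound, because (i) $|T_g|$ was minimized only over the finitely many $g$ with $gs_i=h_0$ for the \emph{fixed} base point $h_0$, so passing to the new base point $h_0'=gs_{l_0}$ cannot contradict that minimality; and (ii) $\phi$ is defined only on $G_S$, so ``the $\phi$-maximal layer of $B$'' is meaningless unless you work inside a single coset of $G_S$ --- which is precisely the bookkeeping (minimizing $\varphi_t$ over $\{g\in G_t:\ n_{g_0g_1\cdots g_{t-1}g}\neq 0\}$) that the paper's nested construction supplies and your sketch omits. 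Moreover, when the maximum of $\phi$ on the $s_l$ is attained more than once you must pass to a strictly smaller subset, a new subgroup and a \emph{new} homomorphism; your fallback case where all values ``coincide'' cannot occur (it would force $\phi$ to be trivial on $G_S$), and the recursion you describe has no visible termination measure.

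A second, smaller but genuine error is your treatment of the two-element case: appealing to Howie's theorem, local indicability of $H$, or a spherical diagram in $K_\p$ is both unnecessary and not a valid argument here. The correct observation is that for $|S|=2$ the subgroup $G_S$ is cyclic, generated by $s^{-1}t\neq 1$, and since $G$ is torsion-free (one of the hypotheses of Conjecture \ref{conjivanov}) it is infinite cyclic, hence indicable; this is the only reason the hypothesis may omit subsets of size at most two, and it lets the $p=2$ case be absorbed into the very same induction rather than treated by a separate asphericity argument. So while your plan points in the right direction, as written the proof has a real gap at its key extremality/descent step.
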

\begin{proof}
For simplicity we assume that each subgroup $G_S$ in the statement is indicable. The proof of the general case is identical.
Suppose $\{n_g^j\}_{1\le j\le m, g\in G}\cup \{n_g\}_{g\in G}$ is a solution with finite support of the system $\{E_{i,g}\}_{1\le i\le n, g\in G}\cup \{E_g\}_{g\in G}$. Assume there exists $g_0\in G$ such that $n_{g_0}\neq 0$.
Similarly as in the proof of Proposition \ref{upp}, let 
\begin{equation*}
\sum\limits_{i=1}^p a_in_{gs_i}=0, \tag{\ref{eq2}}
\end{equation*}
be the equation $E_g$ associated to $x$ and $g\in G$, where $p\ge 1$, $a_i$ are nonzero integers and $s_1,s_2,\ldots, s_p$ are pairwise different elements of $G$. Then $\{s_1,s_2,\ldots, s_p\}\subseteq \{s(k,r)\in G | k\in \occ(x,r)\}$. Let $G_1=\langle s_i^{-1}s_j | 1\le i,j\le p \rangle$. If $p_1=p>1$, by hypothesis there exists a nontrivial homomorphism $\varphi _1:G_1 \to \Z$ (for $p=2$ this follows from the fact that $G$ is torsion-free). Let $m_1=\min \{\varphi_1(g) | g\in G_1 \textrm{ and } n_{g_0g}\neq 0\}$. Note that $m_1$ is well-defined since $n_{g_0}\neq 0$ and only finitely many $n_g$ are non-zero. Let $g_1\in G_1$ be such that $n_{g_0g_1}\neq 0$ and $\varphi_1(g_1)=m_1$. Let $M_1=\max \{\varphi_1(s_1), \varphi_1(s_2), \ldots, \varphi_1(s_{p_1})\}$. By reordering the $s_i$ if needed, there exists $1\le p_2\le p_1$ such that $\varphi_1(s_i)=M_1$ for $1\le i\le p_2$ and $\varphi_1(s_i)<M_1$ for $p_2< i\le p_1$. Since $\varphi_1$ is non-trivial, $p_2<p_1$. Let $G_2=\langle s_i^{-1}s_j | 1\le i,j\le p_2\rangle$. If $p_2>1$, by hypothesis there exists a nontrivial homomorphism $\varphi_2:G_2\to \Z$. Let $m_2=\min \{\varphi_2(g) | g\in G_2 \textrm{ and } n_{g_0g_1g}\neq 0\}$. Let $g_2\in G_2$ be such that $n_{g_0g_1g_2}\neq 0$ and $\varphi_2(g_2)=m_2$. Let $M_2=\max \{\varphi_2(s_1), \varphi_2(s_2), \ldots, \varphi_2(s_{p_2})\}$. By reordering the $s_i$, there exists $1\le p_3\le p_2$ such that $\varphi_2(s_i)=M_2$ for $1\le i\le p_3$ and $\varphi_2(s_i)<M_2$ for $p_3< i\le p_2$. Since $\varphi_2$ is non-trivial, $p_3<p_2$. This process ends when $p_{l+1}=1$ for some $l$. Then $m_l=\min \{ \varphi_l(g) | g\in G_l \textrm{ and } n_{g_0g_1 \ldots g_{l-1}g}\neq 0\}$, the minimum attained by some $g_l\in G_l$ and the maximum $M_l=\max \{\varphi_l(s_1), \varphi_l(s_2), \ldots, \varphi_l(s_{p_l})\}$ is only attained by $s_1$. Note that $G_1\supseteq G_2 \supseteq \ldots \supseteq G_l$.

Let $g=g_0g_1\ldots g_ls_1^{-1}\in G$. Equation (\ref{eq2}) becomes then $$\sum\limits_{i=1}^p a_in_{g_0g_1\ldots g_ls_1^{-1}s_i}=0.$$

Let $i>p_2$. Since $\varphi_1$ is trivial in $G_2$, $\varphi_1(g_1g_2\ldots g_ls_1^{-1}s_i)=\varphi_1(g_1)-\varphi_1(s_1)+\varphi_1(s_i)<\varphi_1(g_1)=m_1$, thus $n_{g_0g_1\ldots g_ls_1^{-1}s_i}=0$. Then

$$\sum\limits_{i=1}^{p_2} a_in_{g_0g_1\ldots g_ls_1^{-1}s_i}=0.$$

If $i>p_3$, then $\varphi_2(g_2g_3\ldots g_ls_1^{-1}s_i)=\varphi_2(g_2)-\varphi_2(s_1)+\varphi_2(s_i)<\varphi_2(g_2)=m_2$, thus $n_{g_0g_1\ldots g_ls_1^{-1}s_i}=0$ and

$$\sum\limits_{i=1}^{p_3} a_in_{g_0g_1\ldots g_ls_1^{-1}s_i}=0.$$

Following this reasoning we conclude $a_1n_{g_0g_1\ldots g_l}=0$, which contradicts the definition of $g_l$. The contradiction arose from the faulty assumption that there exists $g_0\in G$ with $n_{g_0}\neq 0$. By Lemma \ref{lemaivanov}, $\mathcal{Q}$ is aspherical.

\end{proof}

The following result is a weak version of the conjecture, where we are allowed to perturb the relation $r$.

\begin{teo}\label{ivanovp}
Let $\p=\langle x_1,x_2,\ldots, x_n | r_1,r_2, \ldots, r_m \rangle$ be an aspherical presentation of an indicable group $H$ and let $\mathcal{Q}=\langle x_1,x_2,\ldots, x_n,x | r_1,r_2, \ldots, r_m, r \rangle$ be such that the total exponent of $x$ in $r$ is non-zero. Then there exists a cyclic permutation $r'$ of $r$, $1\le i\le n$ and $M_0\in \mathbb{N}$ such that, for every integer $M$ with $|M|\ge M_0$, the perturbed presentation $$\mathcal{Q}'=\langle x_1,x_2,\ldots, x_n,x | r_1,r_2, \ldots, r_m, x_i^Mr' \rangle $$ is aspherical if $H$ naturally embeds in the group $G'$ presented by $\mathcal{Q}'$.  
\end{teo}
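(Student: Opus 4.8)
\medskip

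The plan is to reduce to Lemma \ref{lemaivanov} and then run an extremal-weight argument on the single relator $x_i^{M}r'$, exploiting the freedom in $M$ to make the weight of $x$ arbitrarily large. By Lemma \ref{lemaivanov} it is enough to produce $r'$, $i$ and $M_0$ such that for $|M|\ge M_0$ every finitely supported solution $\{n^j_g\}\cup\{n_g\}$ of the system of Theorem \ref{ecuaciones} associated with $\mathcal{Q}'$ has $n_g=0$ for all $g\in G'$. The first point to record is that, since $x$ occurs neither in the $r_j$ nor in the prefix $x_i^{M}$, the equation $E_g$ attached to the generator $x$ is $\sum_{k\in\occ(x,r')}\epsilon_{k,r'}\,n_{g\,s(k,r')}=0$, which does not depend on $M$ and involves none of the $n^j_g$; collecting equal group elements it becomes $\sum_{\ell=1}^{p}a_\ell n_{g s_\ell}=0$ with $s_1,\dots,s_p\in G'$ pairwise distinct, $a_\ell\neq0$, and $p\ge1$ because $\sum_\ell a_\ell$ equals the total exponent of $x$ in $r$, which is nonzero. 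Only these equations will be used.

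The combinatorial heart is the choice of $r'$. After replacing, if necessary, $r$ by a cyclic permutation of $r^{-1}$ (this changes neither $G'$ nor the asphericity of $\mathcal{Q}'$, only inverting and rotating the last relator), assume $e:=\exp(x,r)>0$. Let $t_0$ be the \emph{last} position at which the partial $x$-exponent of $r$, read cyclically, attains its minimum, and let $r'$ be the cyclic permutation of $r$ beginning at $t_0+1$. Using the periodicity of the lifted exponent profile one checks that the partial $x$-exponent of $r'$ is strictly positive from the first letter onward; hence the first letter of $r'$ is $x$ with exponent $+1$, so $1\in\occ(x,r')$, $s(1,r')=r'$, and $\exp(x,s(1,r'))=e$ is the strict maximum of $\{\exp(x,s(k,r')):k\in\occ(x,r')\}$ (every other term being at most $e-1$).

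Now I bring in indicability: choose a nonzero $v\in\R^n$ orthogonal to every $q(r_j)$ and an index $i$ with $v_i\neq0$; these are the $r'$ and $i$ of the statement. For each $M$ the vector $(v,c)$ with $c=c(M):=-(Mv_i+d)/e$, where $d$ is the $v$-weight of the letters of $r$ other than $x$, is orthogonal to every relator of $\mathcal{Q}'$ and hence defines an additive weight $\overline{\,\cdot\,}\colon G'\to\R$ with $\overline{x_i}=v_i$ and $\overline{x}=c$; for a word $w$ one has $\overline{w}=\overline{w_0}+c\cdot\exp(x,w)$, where $w_0$ is $w$ with its letters $x^{\pm1}$ erased and $|\overline{w_0}|,|\exp(x,w)|$ are bounded in terms of $\len r$ only. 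As $v_i\neq0$ we have $|c(M)|\to\infty$, so we may fix $M_0$ so large that, for $|M|\ge M_0$: (i) no $s(k,r')$ with $k\neq1$ equals $r'$ in $G'$ --- such an equality would force $\overline{s(k,r')}=\overline{r'}$, hence $c(M)$ to take a fixed value (the denominator $\exp(x,s(k,r'))-e$ being nonzero), which happens for at most one $M$ --- so that the group element $r'$ is one of the $s_\ell$, with coefficient $\epsilon_{1,r'}=+1$; and (ii) $\overline{r'}$ is a strict extremum of $\{\overline{s_1},\dots,\overline{s_p}\}$, the minimum if $c(M)<0$ and the maximum if $c(M)>0$, because $\overline{s_\ell}-\overline{r'}$ is an affine function of $c(M)$ whose leading coefficient, $\exp(x,s(k,r'))-e<0$ for a representative occurrence $k$ of $s_\ell$, is negative, so its sign for $|c(M)|$ large is determined by that of $c(M)$.

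The finish is the extremal-term argument from the proof of Theorem \ref{main}. Suppose some $n_{g_0}\neq0$; over the finite nonempty set $\{g\in G':n_g\neq0\}$ let $h$ realize the maximum of $\overline{\,\cdot\,}$ if $\overline{r'}$ is the minimum in (ii) and the minimum of $\overline{\,\cdot\,}$ otherwise, and put $g:=h\,r'^{-1}$. In $E_g$ the term indexed by the group element $r'$ equals $a_{\ell_0}n_h$, which is nonzero; for $\ell\neq\ell_0$ the element $g s_\ell$ has weight $\overline{h}-\overline{r'}+\overline{s_\ell}$, which by (ii) lies strictly beyond $\overline{h}$ on the side excluded by the extremality of $h$, so $n_{g s_\ell}=0$. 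Hence $E_g$ reduces to $a_{\ell_0}n_h=0$, a contradiction. Therefore all $n_g$ vanish, and Lemma \ref{lemaivanov}, together with the asphericity of $\p$ and the hypothesis that $H$ embeds in $G'$, gives that $\mathcal{Q}'$ is aspherical; the case $e<0$ is entirely symmetric with minima and maxima interchanged. I expect the two delicate points to be the construction and the strict positivity property of the cut position $t_0$, and the verification in (i) that for $|M|$ large the distinguished term $r'$ genuinely survives the grouping of equal group elements in $E_g$.
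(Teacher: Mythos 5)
Your proposal is correct and follows essentially the same route as the paper: reduce to Lemma \ref{lemaivanov}, choose a cyclic permutation $r'$ whose total $x$-exponent strictly dominates the $x$-exponents of all proper suffixes at later occurrences of $x$, extend the weight coming from indicability of $H$ by a coordinate $\alpha_M$ depending on $M$, and for $|M|$ large run the extremal (I-test style) argument on the equations $E_g$ to force $n_g=0$. The only real difference is your proof of the combinatorial Claim, obtained by cutting at the last minimum of the cyclic prefix-exponent profile (a cycle-lemma argument) instead of the paper's iterated re-cutting and infinite-word argument; this is a valid, if anything cleaner, verification of the same statement, and your explicit treatment of possible coincidences among the $s(k,r')$ in $G'$ makes precise what the paper leaves to ``the ideas of Section \ref{seccionppal}''.
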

\begin{proof}
Let $\beta$ be the total exponent of $x$ in $r$. Replacing $r$ by $r^{-1}$ if needed, we can assume $\beta >0$. Let $\occ(x,r)=\{k_1,k_2,\ldots, k_p\}$ be the set of occurrences of $x$ in $r$, with $1\le k_1(r) < k_2(r) < \ldots < k_p(r)\le \len (r)$. Consider the sequence $$S(r): a_1(r), a_2(r), \ldots, a_p(r)$$ where $a_i$ is the total exponent of $x$ in the subword $s(k_i,r)$. We will prove the following 

Claim: there exists a cyclic permutation $r'$ of $r$ such that $k_1(r')=1$, $\epsilon_{1,r'}=+1$ (that is, that the first letter of $r'$ is $x^{+1}$), and $a_i(r')<a_1(r')=\beta$ for every $i>1$.

If $r'$ is a cyclic permutation of $r$ which begins with $x^{+1}$, then it is easy to see that

(i) The sequence $S(r')$ begins with $a_1(r')=\beta$,

(ii) $|a_{i+1}(r')-a_{i}(r')|\le 1$ for every $1\le i<p$,

(iii) $a_p(r')$ is $0$ or $1$ (it is $0$ if $\epsilon_{k_p(r'),r'}=-1$ and $1$ if $\epsilon_{k_p(r'),r'}=1$).

Suppose that for a given cyclic permutation $r'=r'_1$ of $r$ which begins with $x^{+1}$, there exists $1<i\le p$ such that $a_i(r_1)\ge \beta$. Assume $i=i_1$ is maximum with that property. Then by (ii) and (iii), $a_i(r')=\beta$. We will show that $\epsilon_{k_i(r'),r'}=1$. Assume on the contrary that $\epsilon_{k_i(r'),r'}=-1$, then $i\neq p$ by (iii), since $\beta\neq 0$. Now there are two cases: $\epsilon_{k_{i+1}(r'),r'}=-1$ or $\epsilon_{k_{i+1}(r'),r'}=1$. In the first case, $a_{i+1}(r')=a_i(r')+1=\beta+1$, and in the second, $a_{i+1}(r')=a_i(r')=\beta$. Any of these contradicts the maximality of $i$. We conclude then that $\epsilon_{k_i(r'),r'}=1$. Note that the total exponent of $x$ in the subword $w=w_1$ given by the first $k_i(r')-1$ letters of $r'$ is $0$. We can consider then the cyclic permutation $r'_2$ of $r'=r'_1$ which begins with the $k_{i_1}(r'_1)$-th letter of $r'_1$ and repeat this argument. If $r'_2$ does not satisfy the property of the Claim, then there exists $1<i_2\le p$ such that the total exponent of $x$ in the subword $w_2$ given by the first $k_{i_2}(r'_2)-1$ letters of $r_2$ is $0$. If the Claim is false, there is a sequence $w_1,w_2, \ldots$ of subwords of cyclic permutations of $r$, all of which have zero total exponent for $x$. Moreover, the infinite word $w_1w_2w_3\ldots$ coincides with the infinite word $r'r'r'\ldots$. There exist then $k,l\in \mathbb{N}$ with $k<l$ such that $w_kw_{k+1}\ldots w_l$ is a nontrivial power $(r'_0)^N$ of a cyclic permutation $r'_0$ of $r$. But this is absurd since the total exponent of $x$ in $w_kw_{k+1}\ldots w_l$ is $0$, while the exponent of $x$ in $(r'_0)^N$ is $N\beta>0$. This finishes the proof of the Claim.

Let $r'$ be a cyclic permutation of $r$ which satisfies the properties stated in the Claim. Let $q:F(x_1,x_2,\ldots, x_n)\to \mathbb{Z}^n$ and $q':F(x_1,x_2,\ldots, x_n, x)\to \mathbb{Z}^{n+1}$ denote the abelianization maps. Since $H$ is indicable, there exists a nonzero vector $v=(v_1,v_2,\ldots, v_n)\in \R^n$ orthogonal to each $q(r_j)$. Let $1\le i\le n$ be such that $v_i\neq 0$ and let $M\in \mathbb{Z}$. Define $\widetilde{r}=x_i^Mr'$, let $\mathcal{Q}'$ be the presentation defined in the statement of the theorem and let $G'$ be the group presented by $\mathcal{Q}'$. Let $q'(r)=(\beta_1, \beta_2, \ldots, \beta_i, \ldots, \beta_n, \beta)$. Then $q'(\widetilde{r})=(\beta_1, \beta_2, \ldots, \beta_i+M, \ldots , \beta_n, \beta)$. Define $\alpha_M=-\frac{\sum v_l\beta_l +Mv_i}{\beta}$. Then $v_M'=(v_1,v_2,\ldots, v_n,\alpha_M)$ $=$ $(v_1, v_2,\ldots, v_n,\alpha_0)-(0,0,\ldots, 0, \frac{Mv_i}{\beta})$ is orthogonal to each $q'(r_j)$ and to $q'(\widetilde{r})$.

Let $g\in G'$ and let $E_{g}$ $$0=\sum\limits_{k\in \occ(x,\widetilde{r})} \epsilon_{k,\widetilde{r}}n_{gs(k,\widetilde{r})}=\sum\limits_{k\in \occ(x,r')} \epsilon_{k,r'}n_{gs(k,r')}=\sum\limits_{j=1}^p \epsilon_{k_j(r'),r'} n_{gs(k_j(r'),r')}$$ be the equation associated to $x$ and $g$ in $\mathcal{Q}'$.

The weight of $s(k_j(r'),r')$ is $\overline{s(k_j(r'),r')}=\langle q'(s(k_j(r'),r')), v_M'\rangle=\langle q'(s(k_j(r'),r')), v_0'\rangle-\langle q'(s(k_j(r'),r')), (0,0,\ldots, 0, \frac{Mv_i}{\beta})\rangle=c_j-\frac{Mv_i}{\beta}a_j(r')$, where $c_j$ is a constant which does not depend on $M$, and $a_j(r')$ is the total exponent of $x$ in $s(k_j(r'),r')$, which appears in the Claim above. By the Claim $a_j(r')<a_1(r')=\beta$ for every $j>1$. Then, if $|M|$ is big enough, the family of weights $$\{\overline{s(k_j(r'),r')} \ | \ 1\le j\le p\} $$ has a maximum or a minimum with multiplicity one. Then, the ideas of Section \ref{seccionppal} apply to show that $n_g=0$ for every $g\in G'$. The theorem follows then from Lemma \ref{lemaivanov}.

\end{proof}

\begin{obs}
There is a DR analogue of Theorem \ref{ivanovp} obtained by replacing the words ``aspherical'' by ``DR'' in the statement. The proof is identical, considering a finite subcomplex $L$ of $\widetilde{K}_{\mathcal{Q}'}$ without free faces, then proving that it cannot have cells of type $e^2_{m+1,g}$ and then using the hypothesis on $\p$ to conclude that $L$ is $1$-dimensional.

The DR versions of Proposition \ref{upp} and Theorem \ref{ivanovli} do not hold. In fact $\p=\langle | \rangle$, $\mathcal{Q}=\langle x| x^2x^{-1} \rangle$ show that even when $\p$ is DR, $\mathcal{Q}$ can be non-DR. 
\end{obs}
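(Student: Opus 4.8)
The plan is to mirror the proof of Theorem~\ref{ivanovp}, replacing the solution-of-equations bookkeeping of Lemma~\ref{lemaivanov} by the occurrence-counting version of property (p), exactly as Theorem~\ref{maindr} upgrades Theorem~\ref{main}. First I would invoke the Corson--Trace characterization (Theorem~\ref{corson}) together with the equivalence ``property (p) $\Leftrightarrow$ DR'' established after it: if $K_{\mathcal{Q}'}$ were not DR, there would be a finite $2$-dimensional subcomplex $L\subseteq \widetilde{K}_{\mathcal{Q}'}$ with no free faces. Setting $n^j_g=1$ (resp.\ $n_g=1$) when the $2$-cell $e^2_{j,g}$ (resp.\ $e^2_{m+1,g}$) lies in $L$ and $0$ otherwise, Remark~\ref{terminos} and the absence of free faces yield a nontrivial finitely supported family for which every equation $E_{i,g}$ ($1\le i\le n$) and every equation $E_g$ (associated to the new generator $x$) has either no nonzero occurrence-term or at least two.

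The first real step is to show that $L$ contains no cell of type $e^2_{m+1,g}$, that is, $n_g=0$ for all $g\in G'$. Here I would reuse the weight computation in the proof of Theorem~\ref{ivanovp}: since $x$ occurs only in the last relator $\widetilde r=x_i^M r'$ (with $v_i\neq0$), the equations $E_g$ form an isolated subsystem in the unknowns $\{n_g\}$, and the weight vector $v'_M$ makes the single row $\{\overline{s(k_j(r'),r')}\}_{1\le j\le p}$ have, for $|M|\ge M_0$, a maximum (or minimum) of multiplicity one. This is precisely the I-test condition for that one row, so the extremal-weight argument of Theorem~\ref{main} applies at the level of property (p): choosing $h$ of minimal weight among the indices with $n_h\neq0$ and the translate $g=h\,s(k_1(r'),r')^{-1}$ aligning it with the unique maximal occurrence, the index $h$ becomes the strict maximum of the weights of the subindices of $E_g$ while remaining the global minimum over nonzero terms, so every other occurrence-term of $E_g$ vanishes. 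Then $E_g$ has exactly one nonzero occurrence-term, contradicting property (p). Hence all $n_g$ vanish (the minimum case is symmetric, using $-v'_M$).

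Once $n_g\equiv 0$, the contributions of $\widetilde r$ to the equations $E_{i,g}$ ($1\le i\le n$) disappear, so these reduce to the equations of $\p$ in the remaining unknowns $\{n^j_g\}_{1\le j\le m}$, each still having $0$ or $\ge 2$ nonzero occurrence-terms. Following the method of Lemma~\ref{lemaivanov}, I would use the natural embedding $\iota:H\hookrightarrow G'$ to restrict, coset by coset, to finitely supported $H$-indexed families $\widetilde n^{\,j}_h=n^j_{k\iota(h)}$ that satisfy property (p) for $\p$; since $\p$ is DR this forces $\widetilde n^{\,j}_h=0$, hence $n^j_g=0$ for all $g$ and $j$. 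Thus $L$ carries no $2$-cell, contradicting $\dim L=2$, and $K_{\mathcal{Q}'}$ is DR. The main obstacle is purely bookkeeping: one must count terms with multiplicity (occurrences) rather than as signed sums, and verify that the $x$-equations genuinely decouple into a single good row, so that the property-(p) argument, not the equation-solving argument, is the one being run.

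Finally, for the negative assertion I would exhibit the stated example $\p=\langle\,|\,\rangle$, $\mathcal{Q}=\langle x\mid x^2x^{-1}\rangle$. The group $G$ presented by $\mathcal{Q}$ is trivial (the relator forces $x=1$); hence $G$ is torsion-free, has the unique product property and is locally indicable, $H=\{1\}$ embeds in $G$, the total exponent of $x$ in $r$ equals $1\neq 0$, and every subgroup $G_S$ of Theorem~\ref{ivanovli} is trivial, so all hypotheses of the DR versions of Proposition~\ref{upp} and Theorem~\ref{ivanovli} hold; moreover $\p$ is DR. Yet $\widetilde{K}_{\mathcal{Q}}=K_{\mathcal{Q}}$ has a single $1$-cell $e^1_{x,1}$ appearing three times in the boundary of its single $2$-cell $e^2_{1,1}$, so it has no free face and does not collapse: by Theorem~\ref{corson}, $\mathcal{Q}$ is not DR (equivalently, $n_1=1$ satisfies property (p), since the equation $E_1$ has the three occurrence-terms $+n_1,+n_1,-n_1$). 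Note that $\mathcal{Q}$ is nevertheless aspherical, because as a signed equation $E_1$ reads $n_1=0$; this is exactly the gap between asphericity and DR that the perturbation in Theorem~\ref{ivanovp} is designed to close, and its absence here is what makes the unperturbed DR statements fail.
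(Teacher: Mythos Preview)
Your proposal is correct and follows exactly the route the paper sketches: translate non-DR into a property-(p) witness via Corson--Trace, kill the $n_g$'s using the single good row coming from the weight computation of Theorem~\ref{ivanovp}, and then push the remaining $n^j_g$'s back to $\p$ coset by coset along $\iota$ just as in Lemma~\ref{lemaivanov}. One small wording issue in the counterexample: saying ``every subgroup $G_S$ is trivial, so all hypotheses hold'' is misleading, since the trivial group is not indicable; the real reason the hypothesis of Theorem~\ref{ivanovli} is satisfied is that $\{s(k,r):k\in\occ(x,r)\}=\{1\}$ has only one element, so there are no subsets $S$ with more than two elements and the condition is vacuous.
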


\section{An extension of the test} \label{seccionextended}

The ideas of the previous sections can be applied to a obtain a more general method. We use this extended test to prove diagrammatic reducibility of a larger class of LOTs and to study asphericity of presentations when our original I-test fails.

\begin{defi} \label{defposettest}
A function $\varphi: G\to P$ from a group $G$ to a poset $P$ is called \textit{order preserving} if $\varphi (h)\le \varphi (h')$ implies $\varphi (gh)\le \varphi (gh')$ for each $g\in G$ (in particular $\varphi (h)< \varphi (h')$ implies $\varphi (gh)< \varphi (gh')$). More generally, a function $\varphi : \coprod\limits_{j=1}^m G=\bigcup\limits_{j=1}^m G\times \{j\}\to P$ is \textit{order preserving} if $\varphi(h,j)\le \varphi(h',j')$ implies $\varphi(gh,j)\le \varphi(gh',j')$ for every $g\in G$. 

We say that a presentation $\p=\langle x_1, x_2, \ldots , x_n | r_1,r_2,\ldots , r_m \rangle$ of a group $G$ satisfies the \textit{extended test} if there exists a poset $P$ and an order preserving map $\varphi :\coprod\limits_{j=1}^m G\to P$ such that for every $1\le j\le m$ there is some $1\le i\le n$ with the following property: there exists $k\in \occ (x_i,r_j)$ such that $\varphi (s(k,r_j),j)>\varphi (s(k',r_{j'}),j')$ for every $1\le j'\le m$ and $k'\in \occ (x_i,r_{j'})$ with $(k',j')\neq (k,j)$.
\end{defi}

\begin{prop}
If $\p$ satisfies the extended test, it is DR. 
\end{prop}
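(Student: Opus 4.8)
The plan is to mimic the proof of Theorem \ref{main} (and the discussion following it), replacing the numerical weight $\overline{g}$ by the value $\varphi(g,j)$ in the poset $P$, and using the characterization of DR in terms of property (p): it suffices to show that if $\{n^j_g\}_{j,g}$ is a family of integers with finite support such that every equation $E_{i,g}$ has either no nonzero term or at least two nonzero terms, then all $n^j_g$ vanish. So suppose for contradiction that $\{n^j_g\}$ is such a family, not identically zero.

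First I would choose, among all pairs $(g,j)$ with $n^j_g\neq 0$, one for which $\varphi(g,j)$ is maximal in the following sense: since $P$ need not be totally ordered and the support is finite, the set $\{\varphi(g,j) : n^j_g\neq 0\}$ is a finite subset of $P$, so it has a maximal element; pick $(h,j)$ with $n^j_h\neq 0$ and $\varphi(h,j)$ maximal. Now apply the defining property of the extended test to the index $j$: there is some $1\le i\le n$ and some $k\in\occ(x_i,r_j)$ such that $\varphi(s(k,r_j),j) > \varphi(s(k',r_{j'}),j')$ for every $(k',j')\neq(k,j)$ with $k'\in\occ(x_i,r_{j'})$. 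Set $g = h\,s(k,r_j)^{-1}$, so that the term $n^j_{gs(k,r_j)} = n^j_h$ occurs in the equation $E_{i,g}$. Since this term is nonzero, by hypothesis $E_{i,g}$ has a second nonzero term, say $n^{j'}_{gs(k',r_{j'})}$ with $(k',j')\neq(k,j)$, $k'\in\occ(x_i,r_{j'})$.

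The contradiction then comes from order preservation. From $\varphi(s(k',r_{j'}),j') < \varphi(s(k,r_j),j)$ and the fact that $\varphi$ (on $\coprod_{j} G$) is order preserving, applying the group element $g$ on the left gives $\varphi(g\,s(k',r_{j'}),j') < \varphi(g\,s(k,r_j),j) = \varphi(h,j)$. But $n^{j'}_{gs(k',r_{j'})}\neq 0$, so $\varphi(gs(k',r_{j'}),j')$ lies in the set $\{\varphi(g',j'') : n^{j''}_{g'}\neq 0\}$, and we have just exhibited an element of that set strictly below $\varphi(h,j)$, which is consistent with maximality — so I must be slightly more careful. The right move is to take $(h,j)$ with $\varphi(h,j)$ maximal and then, since the inequality $\varphi(gs(k',r_{j'}),j')<\varphi(h,j)$ is \emph{strict}, note it does not immediately contradict maximality unless all other nonzero terms are forced strictly below; but that is exactly what the defining property gives: \emph{every} other term $n^{j'}_{gs(k',r_{j'})}$ of $E_{i,g}$ (with $(k',j')\ne(k,j)$) has $\varphi(gs(k',r_{j'}),j')<\varphi(h,j)$, so the equation $E_{i,g}$ has the single term $n^j_h$ whose subindex attains the maximum value $\varphi(h,j)$ among all terms of that equation, while by the choice of $(h,j)$ as a \emph{globally} maximal nonzero index, no term of $E_{i,g}$ with strictly smaller $\varphi$-value and larger support position can cause cancellation — wait, it can. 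The clean fix, just as in Theorem \ref{main}, is to impose a secondary minimality as in the original proof: there one also minimises over the column index $t$. So I would instead order things as follows.

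To make the argument airtight I would argue exactly parallel to Theorem \ref{main}: the extended test yields, for each $j$, a distinguished pair $(i(j),k(j))$ with $k(j)\in\occ(x_{i(j)},r_j)$; this is the analogue of the ``good ordering'' data, and the strict inequality $\varphi(s(k(j),r_j),j)>\varphi(s(k',r_{j'}),j')$ for $(k',j')\ne(k(j),j)$ plays the role of conditions (2)–(3) of Definition \ref{defiasph} simultaneously (maximality and multiplicity one). Pick a pair $(h,j)$ with $n^j_h\neq 0$ and $\varphi(h,j)$ maximal in $\{\varphi(g,j'):n^{j'}_g\neq 0\}$; among all such maximal pairs this is all we need, because the key point is that in the equation $E_{i(j),g}$ with $g=h\,s(k(j),r_j)^{-1}$, \emph{every} term other than $n^j_h$ has subindex-and-column pair $(gs(k',r_{j'}),j')$ with $\varphi(gs(k',r_{j'}),j')<\varphi(gs(k(j),r_j),j)=\varphi(h,j)$, hence (by maximality of $\varphi(h,j)$ among indices of nonzero coefficients) has coefficient zero. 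Thus $E_{i(j),g}$ has exactly one nonzero term, namely $n^j_h$, contradicting property (p). This shows the family must be trivial, so $K_\p$ satisfies (p); by the equivalence established before Theorem \ref{maindr}, $K_\p$ is DR. The main obstacle — and the only place needing care — is precisely the cancellation issue I flagged: one must verify that passing to $\varphi$-values on $\coprod_j G$ and translating by $g$ genuinely sends \emph{all} competing terms strictly below $\varphi(h,j)$, so that no sum of lower terms can reintroduce a nonzero contribution at the top; this is exactly what the "strictly greater than every other occurrence" clause in Definition \ref{defposettest} is designed to guarantee, combined with the strictness in the order-preserving condition.
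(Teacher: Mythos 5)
Your overall strategy is the right one and matches the paper's: reduce to property (p) via the Corson--Trace characterization, pick an extremal element of the finite set $\{\varphi(g,j)\ |\ n^j_g\neq 0\}$, invoke the distinguished occurrence $(i,k)$ provided by Definition \ref{defposettest} for the corresponding column $j$, translate by $g=h\,s(k,r_j)^{-1}$, and use preservation of strict inequalities to control the other terms of $E_{i,g}$. But the extremal choice you make is the wrong one, and this is a genuine gap, not a cosmetic slip in the write-up. You take $\varphi(h,j)$ \emph{maximal} among the support values, and then conclude that every other term of $E_{i,g}$, having $\varphi(gs(k',r_{j'}),j')<\varphi(h,j)$, ``hence (by maximality) has coefficient zero.'' That deduction is false: being strictly below a maximal element of a finite set in no way excludes membership in that set, so those terms may well be nonzero and the argument collapses exactly at the point you yourself flagged (``which is consistent with maximality --- so I must be slightly more careful''). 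Your proposed repair --- importing a secondary minimization over a column index as in Theorem \ref{main} --- does not address this either; in the extended test the column ordering is already absorbed into the poset $P$, and your final paragraph simply re-asserts the flawed step.

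The correct fix is to reverse the extremal choice: take $\varphi(h,j)$ to be a \emph{minimal} element of the finite nonempty subposet $\{\varphi(g,j)\ |\ n^j_g\neq 0\}\subseteq P$ (this is how the paper argues). Then, since Definition \ref{defposettest} gives $\varphi(s(k,r_j),j)>\varphi(s(k',r_{j'}),j')$ for every competing occurrence and $\varphi$ preserves strict inequalities under left translation, every other term of $E_{i,g}$ has $\varphi(gs(k',r_{j'}),j')<\varphi(h,j)$; a value strictly below a minimal support value cannot itself be a support value, so all those coefficients vanish and $E_{i,g}$ has exactly one nonzero term $n^j_h$, contradicting the ``zero or at least two nonzero terms'' property. (Note also that the strictness in the definition automatically rules out a second summand of $E_{i,g}$ hitting the same unknown $n^j_h$, which is the analogue of the multiplicity-one condition you were worried about.) With ``maximal'' replaced by ``minimal'' your argument becomes the paper's proof; as written, it does not prove the proposition.
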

\begin{proof}
Let $\{n^j_g\}_{j,g}$ be a nontrivial family of integer numbers with finite support such that each equation $E_{i,g}$ has zero or at least two nonzero terms. Let $P$ and $\varphi$ be as in the definition. The finite subposet $\{\varphi (g,j) | n^j_g\neq 0\}\subseteq P$ has at least one minimal element, say $\varphi(h,j)$ for $h\in G, 1\le j\le m$. Let $1\le i\le n$ and $k\in \occ(x_i,r_j)$ be as in Definition \ref{defposettest}. Let $g=hs(k,r_j)^{-1}$. Since $\varphi(gs(k',r_{j'}),j')<\varphi(gs(k,r_j),j)=\varphi(h,j)$ for each $k'\in \occ(x_i,r_{j'})$ with $(k',j')\neq (k,j)$, the equation $E_{i,g}$ has a unique nonzero term, which is a contradiction. 
\end{proof}

\begin{ej}
The I-test is a very particular case of the extended test. Suppose $\p$ satisfies the I-test for $v\in \R^n$ and the orders $j_1,j_2,\ldots ,j_m$ and $i_1, i_2,\ldots ,i_m$ of columns and rows. Consider $P=\R \times \{j_1<j_2<\ldots <j_m\}$ with the lexicographic order and let $\varphi: \coprod G \to P$ be defined by $\varphi (g,j)=(\overline{g},j)$. Then $\varphi$ is order preserving and the fact that the weight matrix $M$ is good implies that $\varphi$ satisfies the required property.
\end{ej}

Suppose now that $\p=\langle x_1,x_2,\ldots ,x_n| r_1,r_2,\ldots ,r_m \rangle$ is a presentation and let $1\le n_1<n_2<\ldots <n_k=n$, $1\le m_1<m_2<\ldots <m_k=m$ be such that for each $1\le l\le k$, the relators $r_1,r_2,\ldots , r_{m_l}$ are words in the first $n_l$ generators. In particular, if $v\in \R^{n}$ is orthogonal to each $q(r_j)$, the weight matrix $M(v)$ is a block matrix

\begin{displaymath}
\left(\begin{array}{c|c|c|c}
M_1(v) & * & \cdots & * \\
\hline
\emptyset & M_2(v) &  & * \\
\hline
\vdots &  & \ddots & \vdots \\
\hline
\emptyset & \emptyset & \cdots & M_k(v) \\
\end{array}\right)
\end{displaymath}

Here $M_l(v)$ is the submatrix of $M(v)$ corresponding to rows $n_{l-1}+1, n_{l-1}+2,\ldots, n_l$ and columns $m_{l-1}+1, m_{l-1}+2,\ldots, m_l$.

\begin{teo} \label{bloques}
Let $\p=\langle x_1,x_2,\ldots ,x_n| r_1,r_2,\ldots ,r_m \rangle$ and let $1\le n_1<n_2<\ldots <n_k=n$, $1\le m_1<m_2<\ldots <m_k=m$ be such that for each $1\le l\le k$, the relators $r_1,r_2,\ldots , r_{m_l}$ are words in the first $n_l$ generators. Let $v_1,v_2,\ldots,v_k\in \R^{n}$ such that each $v_l$ is orthogonal to every $q(r_j)$ and for each $1\le l\le k$, $M_l(v_l)$ is good (see Definition \ref{defiasph}). Then $\p$ is DR.
\end{teo}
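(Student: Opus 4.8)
The plan is to mimic the proof of Theorem \ref{main}, but organizing the ``scan for the maximum weight'' argument block-by-block, using a different weight vector $v_l$ inside each block. By the discussion preceding Theorem \ref{maindr}, it suffices to show that $\p$ satisfies property (p): if $\{n^j_g\}_{j,g}$ is a family of integers with finite support such that every equation $E_{i,g}$ has either zero or at least two nonzero terms, then all $n^j_g$ vanish. So suppose, for contradiction, that such a nontrivial family exists, and let $l$ be the smallest index for which some $n^j_g\neq 0$ with $m_{l-1}<j\le m_l$; set $l_0=l$. Because the relators $r_1,\dots,r_{m_{l_0}}$ only involve the generators $x_1,\dots,x_{n_{l_0}}$, for any row index $i\le n_{l_0}$ every term $n^{j}_{h}$ appearing in $E_{i,g}$ has $j\le m_{l_0}$ (by Remark \ref{terminos}, since $e^1_{i,h'}$ can only be a face of $e^2_{j,\cdot}$ when $x_i$ occurs in $r_j$), and by minimality of $l_0$ the only possibly-nonzero such terms have $m_{l_0-1}<j\le m_{l_0}$. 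Thus, restricted to rows $i$ with $n_{l_0-1}<i\le n_{l_0}$ and columns $j$ with $m_{l_0-1}<j\le m_{l_0}$, the family behaves exactly like a candidate violating property (p) for the ``block'' sub-data whose weight matrix is $M_{l_0}(v_{l_0})$.

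Next I would run the argument of Theorem \ref{main} verbatim inside this block, using the vector $v=v_{l_0}$ to assign weights $\overline{h}=\langle q(h),v_{l_0}\rangle$. Let $\alpha$ be the minimum of $\overline{h}$ over all $h$ such that $n^j_h\neq 0$ for some $j$ with $m_{l_0-1}<j\le m_{l_0}$; among the column orderings $j_1,\dots$ making $M_{l_0}(v_{l_0})$ good, pick the first column $j_t$ admitting some $h$ with $\overline{h}=\alpha$ and $n^{j_t}_h\neq 0$; let $i_t$ be the matched row. By goodness of $M_{l_0}(v_{l_0})$, the maximum $\lambda$ of $M_{(i_t,j_t)}$ equals $\overline{s(k,r_{j_t})}$ for some $k\in\occ(x_{i_t},r_{j_t})$, is the maximum of the whole $i_t$-th row of the block, and has multiplicity one in $\bigcup_{l\ge t}M_{(i_t,j_l)}$. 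Put $g=hs(k,r_{j_t})^{-1}$; then $n^{j_t}_h$ is a term of $E_{i_t,g}$, so $E_{i_t,g}$ has at least two nonzero terms. But every term of $E_{i_t,g}$ has a subindex of weight at most $\overline{g}+\lambda=\alpha$, hence exactly $\alpha$ if nonzero, and — using minimality of $l_0$ to kill columns $j>m_{l_0}$ and columns $j\le m_{l_0-1}$, minimality of $t$ to kill columns $j_l$ with $l<t$ within the block, and the multiplicity-one condition to kill the remaining columns $j_l$ with $l>t$ — no second nonzero term can occur, a contradiction. This proves property (p), and by Theorem \ref{maindr} (or rather its proof, which only uses property (p)) and Theorem \ref{corson}, $K_\p$ is DR.

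I expect the main obstacle to be the bookkeeping that ensures a term $n^{j}_{\cdot}$ with $j$ in a \emph{later} block (that is, $m_{l_0}<j$) genuinely cannot appear in $E_{i_t,g}$: this is where the triangular block structure is essential, and it rests on the precise statement that $r_1,\dots,r_{m_l}$ only involve $x_1,\dots,x_{n_l}$, combined with Remark \ref{terminos}. One must also be slightly careful that the ``weight'' $\overline{h}$ with respect to $v_{l_0}$ is well-defined on $G$ (independent of representative), which holds because $v_{l_0}$ is orthogonal to every $q(r_j)$, $1\le j\le m$ — not merely to those in the first $m_{l_0}$ columns — so the hypothesis as stated is exactly what is needed. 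Everything else is a direct transcription of the single-block argument already carried out in Theorem \ref{main}, localized to the rows and columns of $M_{l_0}(v_{l_0})$.
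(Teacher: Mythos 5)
There is a genuine gap, and it lies in the direction of your block selection. You take $l_0$ to be the \emph{smallest} block index containing a nonzero term, and you justify ignoring later columns by asserting that ``for any row index $i\le n_{l_0}$ every term $n^j_h$ appearing in $E_{i,g}$ has $j\le m_{l_0}$'' and later that ``minimality of $l_0$ kills columns $j>m_{l_0}$''. Both statements are false under the hypotheses of the theorem: the assumption is only that the \emph{earlier} relators $r_1,\ldots,r_{m_l}$ avoid the \emph{later} generators; a relator $r_j$ with $j>m_{l_0}$ may perfectly well contain $x_i$ with $i\le n_{l_0}$ (these are exactly the $*$ blocks above the diagonal in the paper's displayed weight matrix). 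So for a row $i_t$ in block $l_0$, the equation $E_{i_t,g}$ can contain terms $n^j_{h'}$ with $j>m_{l_0}$, and minimality of $l_0$ says nothing about those terms -- it only forces the blocks \emph{before} $l_0$ to vanish. Moreover, goodness of $M_{l_0}(v_{l_0})$ bounds the weights only along the columns of block $l_0$, so a term from a later column can have subindex of weight larger than $\overline{g}+\lambda$ and be nonzero; your ``unique nonzero term'' contradiction therefore does not follow. If all relators $r_j$ with $j>m_{l_0}$ really avoided the first $n_{l_0}$ generators, the presentation would simply split into independent pieces, which is not the situation the theorem addresses.

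The fix is to reverse the choice: take $l_0$ to be the \emph{largest} block index containing a nonzero term. Then for a row $i_t$ with $n_{l_0-1}<i_t\le n_{l_0}$, the equation $E_{i_t,g}$ has no terms from columns $j\le m_{l_0-1}$ (those relators do not involve $x_{i_t}$), and all terms from columns $j>m_{l_0}$ vanish by maximality of $l_0$; after that, your single-block scan with the vector $v_{l_0}$ (minimal weight $\alpha$, first good column $j_t$ admitting it, multiplicity-one maximum $\lambda$ of the block row) goes through verbatim and yields the contradiction, killing block $l_0$ and hence, iterating, all blocks. This corrected argument is essentially the paper's proof in disguise: the paper verifies the extended test with the poset $P=(\R\times\{\text{block-}k\text{ columns}\})*\cdots*(\R\times\{\text{block-}1\text{ columns}\})$, in which \emph{later} blocks sit \emph{below} earlier ones, so taking a minimal nonzero $\varphi$-value automatically selects the last block with a nonzero term -- precisely the maximality you need. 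Your remaining bookkeeping (well-definedness of the weights because each $v_l$ is orthogonal to every $q(r_j)$, reduction to property (p) and Theorem \ref{corson}) is correct.
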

\begin{proof}
We show that $\p$ satisfies the extended test. Let $j_{m_{l-1}+1}, j_{m_{l-1}+2}, \ldots , j_{m_{l}}$ be an order of the columns $m_{l-1}+1, m_{l-1}+2, \ldots , m_l$ that makes $M_l(v_l)$ good. Recall that for posets $P, Q$, the join $P*Q$ is the order in the disjoint union of $P$ and $Q$ obtained by preserving the ordering in each copy and setting $p<q$ for each $p\in P$ and $q\in Q$. Define then $$P= (\R \times \{j_{m_{k-1}+1}<j_{m_{k-1}+2}< \ldots < j_{m_{k}}\}) * (\R \times \{j_{m_{k-2}+1}< j_{m_{k-2}+2}< \ldots < j_{m_{k-1}} \}) * \ldots$$ $$\ldots * (\R \times \{j_{1}< j_{2}< \ldots <j_{m_{1}}\}),$$ in which each factor $\R \times \{j_{m_{l-1}+1}, j_{m_{l-1}+2}, \ldots , j_{m_{l}}\}$ is considered with the lexicographic order. Define $\varphi: \coprod\limits_{j=1}^m G\to P$ by $\varphi (g,j)=(\overline{g},j)$, where the weight $\overline{g}$ is taken with respect to the vector $v_l$ if $m_{l-1}+1\le j\le m_l$. Then $\varphi$ is order preserving and $\p$ satisfies the extended test.
\end{proof}

\begin{obs}
Note that even if all the vectors $v_l$ in the previous theorem are equal, it may happen that the blocks $M_l(v_l)$ are good while the weight matrix $M$ is not.
\end{obs}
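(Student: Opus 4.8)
The remark is an existence statement, so the plan is to exhibit one explicit presentation, together with the splitting required by Theorem~\ref{bloques} and a single common vector $v=v_1=\ldots=v_k$, for which each diagonal block $M_l(v)$ is good while the full weight matrix $M(v)$ is not. The phenomenon I would exploit is built into Definition~\ref{defiasph}: goodness of a block only controls the maxima and their multiplicities among the entries lying \emph{inside} that block, whereas goodness of $M$ compares each chosen maximum $\lambda_k$ with the whole row $\bigcup_{j=1}^m M_{i_k,j}$, which in $M$ also contains the off-diagonal (upper-right) entries. Thus an off-diagonal entry can exceed a diagonal maximum and break condition (2), even though every block is internally good. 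I would arrange exactly this, choosing the block sizes so that the offending column is \emph{forced} onto the offending row, so that no reordering of $M$ can repair the defect.

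Concretely I would take $\p=\langle x_1,x_2,x_3 \mid r_1,r_2\rangle$ with $r_1=x_1$ and $r_2=x_2^{-1}x_3^{-1}x_1x_2x_3$, split as $n_1=m_1=1<n_2=3$, $m_2=2$, so that $r_1$ is a word in the first generator alone. The common vector is $v=(0,1,1)$, which is nonzero and orthogonal to $q(r_1)=q(r_2)=(1,0,0)$. A direct computation of the suffix weights $\overline{s(k,r_j)}$ gives
\[ M(v)=\begin{pmatrix} \{0\} & \{2\} \\ \emptyset & \{1,2\} \\ \emptyset & \{2,1\} \end{pmatrix}, \]
with rows indexed by $x_1,x_2,x_3$ and columns by $r_1,r_2$. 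The diagonal blocks are $M_1(v)=(\{0\})$, which is good, and $M_2(v)=\left(\begin{smallmatrix}\{1,2\}\\ \{2,1\}\end{smallmatrix}\right)$, which is good because its row $\{1,2\}$ has maximum $2$ with multiplicity one. However $M(v)$ is not good: since $M_{2,1}=M_{3,1}=\emptyset$, the column $r_1$ can only be matched to the row $x_1$, forcing the chosen maximum $\lambda=\max M_{1,1}=0$; but the whole first row is $M_{1,1}\cup M_{1,2}=\{0\}\cup\{2\}$, whose maximum is $2\in M_{1,2}$, so condition (2) of Definition~\ref{defiasph} fails for every admissible ordering.

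The main obstacle is ruling out the temptingly small cases and securing the forced matching. One cannot take two $1\times1$ blocks (equivalently two generators, $n=m=2$): the single-generator relator forces the corresponding coordinate of $v$ to vanish, hence makes the diagonal entry equal to $\{0\}$, so the off-diagonal entry would have to be strictly positive; but a positive off-diagonal weight means the weight path of the second relator dips below $0$, and a standard ``valley'' argument then forces the maximal weight of the second block to be attained by at least two occurrences, destroying goodness of that block. The resolution, realised above, is to pass to three generators and a $1\times1$ block followed by a $2\times1$ block: the empty lower-left corner forces the column $r_1$ onto the row $x_1$, so condition (2) cannot be rescued by reordering, while the extra row in the second block leaves enough freedom to keep a unique maximum there. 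Verifying the displayed matrix $M(v)$ and the goodness of the two blocks is then a routine computation of the suffix weights $\overline{s(k,r_j)}$.
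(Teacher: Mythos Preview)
Your example is correct and does exactly what the remark asserts. The paper itself offers no proof or example for this observation, so there is nothing to compare against; you have supplied a valid witness where the paper simply states the claim. Your computation of $M(v)$ is accurate, the two diagonal blocks are indeed good, and your argument that column $r_1$ is forced onto row $x_1$ (by the empty entries $M_{2,1}=M_{3,1}=\emptyset$) correctly rules out every ordering of the full matrix.

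One minor comment: the paragraph on why $n=m=2$ cannot work is essentially right but the ``valley'' argument is only sketched. The precise statement is that when $v_1=0$ the suffix-weight path of $r_2$ (a lattice path from $0$ to $0$ changing by $\pm 1$ at $x_2$-letters and by $0$ at $x_1$-letters) has the property that any strictly positive global maximum is attained both by an $x_2^{-1}$-occurrence immediately before the plateau and an $x_2^{+1}$-occurrence immediately after it, forcing multiplicity $\ge 2$ in $M_{2,2}$; hence $M_2(v)$ cannot be good once $M_{1,2}$ contains a positive element. This is what you are gesturing at, and it justifies passing to $n=3$. Since the remark only asks for existence, this discussion is optional, but if you keep it you might want to make the path argument explicit.
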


Theorem \ref{bloques} can be used to generalize the notion of deforestability for LOGs.

\begin{defi} \label{wd}
A LOG $\Gamma$ is \textit{weakly deforestable} if there is a sequence $\Gamma=\Gamma_0\supset \Gamma_1 \supset \ldots \supset \Gamma_k$ of sub-LOGs of $\Gamma$ with $\Gamma_k$ discrete, such that for each $1\le i\le k$ there is a deforestation of type IL/T (see Definition \ref{defo}) or a deforestation of type TL/I from $\Gamma_{i-1}$ to $\Gamma_{i}$.  
\end{defi}

\begin{ej}

Figure \ref{cruz} shows a non-deforestable LOT $\Gamma$ of diameter $4$. There is a deforestation of type IL/T from $\Gamma$ to the sub-LOT $\Gamma '$ given by the horizontal edges. This sub-LOT is deforestable. Hence $\Gamma$ is weakly deforestable.

\begin{figure}[h] 
\begin{center}
\includegraphics[scale=0.32]{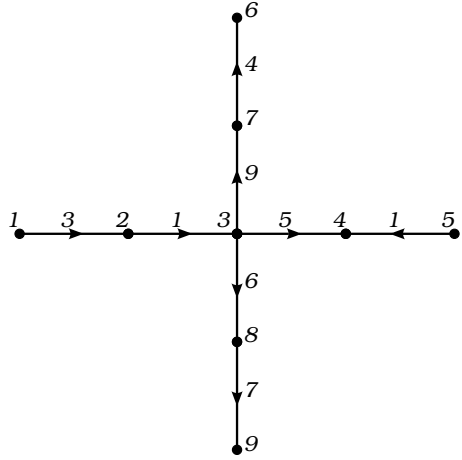}

\caption{A weakly deforestable LOT of diameter $4$.}\label{cruz}
\end{center}
\end{figure}

\end{ej}

\begin{teo}
If a LOG $\Gamma$ is weakly deforestable, the associated presentation $\p$ satisfies the hypothesis of Theorem \ref{bloques}. In particular $K_\p$ is DR.
\end{teo}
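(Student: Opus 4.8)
The plan is to read the deforestation sequence backwards and turn it into the block decomposition demanded by Theorem \ref{bloques}. Fix a weakly deforestable LOG $\Gamma$ together with a chain $\Gamma=\Gamma_0\supset\Gamma_1\supset\cdots\supset\Gamma_k$ of sub-LOGs, $\Gamma_k$ discrete (Definition \ref{wd}), in which each step $\Gamma_{i-1}\to\Gamma_i$ is a deforestation of type IL/T or TL/I; after discarding steps that remove nothing we may assume every inclusion is strict. Since a sub-LOG is an admissible subgraph, $V(\Gamma_k)\subseteq V(\Gamma_{k-1})\subseteq\cdots\subseteq V(\Gamma)$ and $E(\Gamma)=\bigsqcup_{l=1}^{k}\bigl(E(\Gamma_{k-l})\smallsetminus E(\Gamma_{k-l+1})\bigr)$. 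I would set $n_l=|V(\Gamma_{k-l})|$ and $m_l=|E(\Gamma_{k-l})|$, order the generators $x_1,\dots,x_n$ so that $\{x_1,\dots,x_{n_l}\}=V(\Gamma_{k-l})$ for every $l$, and order the relators so that $\{r_1,\dots,r_{m_l}\}$ is exactly the set of relators of the edges of $\Gamma_{k-l}$. Since each deforestation step removes at least one vertex and the same number of edges, $n_1<\cdots<n_k=n$ and $m_1<\cdots<m_k=m$; and since the initial vertex, terminal vertex and label of an edge of the admissible subgraph $\Gamma_{k-l}$ all lie in $V(\Gamma_{k-l})$, each $r_j$ with $j\le m_l$ is a word in $x_1,\dots,x_{n_l}$. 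This is precisely the combinatorial set-up of Theorem \ref{bloques}.

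Next I would supply the vectors: for each $l$ let $v_l=(1,1,\dots,1)$ if the step $\Gamma_{k-l}\to\Gamma_{k-l+1}$ is a deforestation of type IL/T, and $v_l=(-1,-1,\dots,-1)$ if it is of type TL/I. Every LOG relator has the form $r=xzy^{-1}z^{-1}$, so $q(r)=e_x-e_y$, and hence both $(1,\dots,1)$ and $(-1,\dots,-1)$ are orthogonal to every $q(r_j)$, exactly as Theorem \ref{bloques} requires. What is then left is to check that each diagonal block $M_l(v_l)$ is good in the sense of Definition \ref{defiasph}; its rows are the vertices removed at the step $\Gamma_{k-l}\to\Gamma_{k-l+1}$ (together with the vertices of $\Gamma_k$ when $l=1$) and its columns are the edges removed at that step, so it has at least as many rows as columns, with equality for $l\ge 2$.

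For this last step I would simply rerun the computation from the proof of Proposition \ref{defimplicadr}, now applied to the single deforestation $\Gamma_{k-l}\to\Gamma_{k-l+1}$ rather than to a deforestation all the way to a discrete graph. Using the orderings $e_1,\dots,e_p$ of removed edges and $x_1,\dots,x_p$ of removed vertices furnished by Definition \ref{defo} as the column and row orderings of $M_l(v_l)$, and taking type IL/T (so $v_l=(1,\dots,1)$): every entry of $M_l(v_l)$ lies in $\{0,-1\}$, and a $0$ lies in the entry $(x_i,e_j)$ exactly when $x_i$ is an initial vertex or a label of $e_j$. If condition (IL) holds at step $j$, then $x_j$ is the initial vertex or the label of $e_j$, but not both, so $0\in M_{j,e_j}$ with multiplicity one, while the uniqueness part of (IL) forbids any further $0$ in the columns $e_j,\dots,e_p$; since the maximum of every nonempty row is $0$, conditions (1)--(3) of Definition \ref{defiasph} hold at step $j$. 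If condition (T) holds at step $j$, then $x_j$ is neither an initial vertex nor a label of any edge, so row $x_j$ of $M_l(v_l)$ contains no $0$, $M_{j,e_j}=\{-1\}$ and $M_{j,e_{j'}}=\emptyset$ for $j'>j$, and again (1)--(3) hold. The type TL/I case is identical with the signs reversed and $v_l=(-1,\dots,-1)$. Thus every $M_l(v_l)$ is good, all the hypotheses of Theorem \ref{bloques} are satisfied, and consequently $K_\p$ is DR.

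The only delicate bookkeeping — and the sole reason this is not a verbatim copy of Proposition \ref{defimplicadr} — is that conditions (IL) and (T) are phrased in terms of the larger graph $\Gamma_{k-l}$, so the edges they quantify over include edges of $\Gamma_{k-l+1}$ that sit in columns of later blocks, whereas goodness of the standalone block $M_l(v_l)$ concerns only that block's own rows and columns. This is harmless: the uniqueness assertions in (IL) and (T) are taken over all of $\Gamma_{k-l}\smallsetminus\{e_1,\dots,e_{j-1}\}$, hence a fortiori over the remaining block columns $e_j,\dots,e_p$; and the extra, possibly all-empty, rows of $M_1(v_1)$ coming from the vertices of $\Gamma_k$ cause no trouble since goodness only asks for a matching on $m_l-m_{l-1}$ of the rows. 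Everything else reduces to the routine fact that for a LOG relator $xzy^{-1}z^{-1}$ and the vector $\pm(1,\dots,1)$ the weights $\overline{s(k,r_j)}$ equal $0$ at the initial letter and at the label, and $\mp 1$ at the terminal letter and at the label.
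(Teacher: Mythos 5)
Your proof is correct and follows essentially the same route as the paper: order generators and relators according to the reversed deforestation chain, take $v_l=\pm(1,\ldots,1)$ depending on the type of each step, and verify block-by-block goodness by the same weight computation as in Proposition \ref{defimplicadr} before invoking Theorem \ref{bloques}. The paper's proof is simply terser, leaving the goodness of each block $M_l(v_l)$ implicit by reference to that earlier computation, whereas you spell it out (including the harmless point that (IL)/(T) quantify over a superset of the block's columns).
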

\begin{proof}
If $\Gamma=\Gamma_0\supset \Gamma_1 \supset \ldots \supset \Gamma_k$ are as in Definition \ref{wd}, there exist $1\le n_1<n_2<\ldots <n_k=n$, $1\le m_1<m_2<\ldots <m_k=m$ such that $\p=\p _{\Gamma}=\langle x_1,x_2,\ldots ,x_n| r_1,r_2,$ $\ldots ,r_m \rangle$ and for each $1\le l\le k$, the relators $r_1,r_2,\ldots , r_{m_l}$ are words in the first $n_l$ generators. Moreover, $n_l-n_{l-1}=m_{l}-m_{l-1}$ is the number of vertices (and edges) in $\Gamma_{k-l} \smallsetminus \Gamma_{k-l+1}$ for each $l\ge 2$. There exist $v_1, v_2, \ldots , v_k \in \R^n$, each $v_l$ equal to $(1,1,\ldots, 1)$ or to $(-1,-1,\ldots, -1)$, such that $M_1(v_1)$ and the square matrices $M_2(v_2), \ldots, M_k(v_k)$ are good. Then Theorem \ref{bloques} applies. 
\end{proof}

A particular case of the extended test which gives a useful generalization of the I-test is the following. Suppose $L$ is a left-orderable group and $\psi:G\to L$ is a group homomorphism. Then for an ordering $j_1,j_2,\ldots ,j_m$ of $\{1,2,\ldots , m\}$, consider $P=L\times \{j_1<j_2<\ldots <j_m\}$, again with the lexicographic order. The map $\varphi :\coprod\limits_{j=1}^m G\to P$ given by $\varphi (g,j)=(\psi(g),j)$ is order preserving and can be used to test the diagrammatic reducibility of $\p$.

\begin{ej}
Let $\p=\langle x,y,z | xzx^{-1}yzy^{-1}z^{-1}y^{-1}, xy^{-1}x^{-1}y^{-1}xy \rangle$ be a presentation of a group $G$. The subspace of vectors orthogonal to $q(r_1)$ and $q(r_2)$ is $\langle (1,1,1) \rangle \subseteq \R^3$. It is easy to check that for any $v\in \langle (1,1,1) \rangle$, the weight matrix $M(v)$ is not good. However we will see that the extended test can be applied to prove diagrammatic reducibility of $\p$.

Let $B_4=\langle x,y,z | xzx^{-1}z^{-1}, xyxy^{-1}x^{-1}y^{-1}, yzyz^{-1}y^{-1}z^{-1}\rangle$ be the braid group of 4-braids. The (opposite of the) Dehornoy ordering is a left-ordering in $B_4$ which satisfies the following: if an element $g\in B_4$ is represented by a word $w$ in the generators which contains the letter $x$ and no occurrence of $x$ in $w$ has positive exponent, then $g>1$ in $B_4$ (see \cite[Section 7.2]{CR} for more details). Let $\varphi: G\coprod G \to B_4$ be the map whose restriction to each copy of $G$ is the natural homomorphism $G\to B_4$.

The $3\times 2$ matrix $M$ defined by $M_{i,j}=\{\varphi(s(k,r_j),j)\}_{k\in \occ(x_i,r_j)}$ is 

\begin{displaymath}\bordermatrix{&\phantom{-}r_1& & \phantom{-}r_2 \cr 
        x& \phantom{-} 1,z^{-1} &  & \phantom{-}1,y^{-1}xy,xy   \cr
        y& z^{-1},z^{-1}y^{-1},1 &  & yx^{-1}, xy,y  \cr 
        z& x^{-1},y^{-1}z^{-1},y^{-1} &  & \phantom{-} \emptyset \cr}
\end{displaymath}

Then $x^{-1}$ is the greatest element of the third row while $yx^{-1}$ is the greatest of the second one. Therefore $\p$ satisfies the extended test.

\end{ej}


\section{Applications to equations over groups}

Recall that a system $S$ of equations over a group $H$ with unknowns $x_1,x_2, \ldots ,x_n$ is a set $\{w_j(x_1,x_2,\ldots, x_n)\}_j$ of words in $H* F(x_1,\ldots,x_n)$ . The letters of $w_j$ which lie in $H$ are called the coefficients of $w_j$. The non-necessarily reduced word $r_j$ in the alphabet $\{x_1, x_1^{-1}, x_2,x_2^{-1}, \ldots ,x_n,x_n^{-1}\}$  which is obtained by deleting the coefficients of $w_j$ will be called the \textit{shape} of $w_j$, and the word $r_j$ considered as an element of the free group $F(x_1,x_2,\ldots, x_n)$ will be called the \textit{content} of $w_j$. Note that the content of $w_j$ is just the image of $w_j$ under the canonical map $H*F(x_1,\ldots,x_n)\to F(x_1,\ldots,x_n)$ which maps $H$ to the identity.
 
We say that the system $S$ has a solution in an overgroup of $H$ if there exits a group $H'$ which contains $H$ as a subgroup and elements $h_1, h_2, \ldots ,h_n$ in $H'$ such that $$w_j(h_1,h_2,\ldots, h_n)=1\in H'$$ for every $j$. The Kervaire-Laudenbach Conjecture states that for any group $H$, a unique equation $w$ with a unique unknown $x$ has a solution in an overgroup of $H$  if $w$ is non-singular (i.e. if the total exponent of $x$ in $w$ is non-zero). This conjecture has been proved in many cases, for different groups $H$ and/or equations $w$. The so called Kervaire-Laudenbach-Howie Conjecture generalizes this to an arbitrary finite number $n$ of unknowns and a non-singular system of $m$ equations (in this case, non-singular means that the $m\times n$ matrix of total exponents has rank equal to $m$). This generalized conjecture has also been verified in various cases. For example, Howie proved that it holds for locally indicable groups \cite{Ho1}.

Let $S$ be a system of equations $w_1,w_2,\ldots ,w_m$ over a group $H$. Let $\p$ be the presentation $\langle x_1,x_2, \ldots , x_n | r_1,r_2, \ldots ,r_m \rangle$ whose generators are the unknowns of $S$ and its relators are the shapes of the equations $w_j$. A well known result by Gersten \cite{Ger} states that if $\p$ is DR, then $S$ has a solution in an overgroup of $H$. In other words, for any group $H$, any system of equations modeled by the presentation $\p$ has a solution in an overgroup of $H$. A presentation with this property is said to be \textit{Kervaire}. The converse of this result is false. The presentation $\p=\langle t | ttt^{-1}\rangle$ is not DR, but it is Kervaire: any equation $atbtct^{-1}$ modeled by $\p$ over any coefficient group $H$ has a solution in an overgroup of $H$ \cite{Hol3}.

We concentrate now on solutions of one equation $w$ with many unknowns. By a result attributed to Pride, for any coefficient group $H$, if the shape of $w$ is cyclically reduced (and non-trivial) then the equation has a solution in an overgroup of $H$ (see \cite[Corollary 5.7]{Ger}). In the same direction, as a consequence of a result of Brick  \cite[Proposition 4.1]{br2}, for an arbitrary group $H$, if the shape of $w$ is reduced (although not necessarily cyclically) and it is not a proper power, then there is a solution in an overgroup of $H$.


Recently Klyachko and Thom \cite{kt} proved the following result for one equation with many variables over hyperlinear groups. Note that their result does not depend on the shape of the equation but only on its content.


\begin{teo} [Klyachko - Thom] \label{ktteo}
Let $G$ be a hyperlinear group. An equation in two variables with coefficients
in $G$ can be solved over $G$ if its content does not lie in $[F_2, [F_2,F_2]]$. Moreover, if $G$ is finite,
then a solution can be found in a finite extension of $G$.
\end{teo}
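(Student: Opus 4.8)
The plan is to reduce the statement to a finite-dimensional, essentially topological fact about the unitary groups, using the definition of hyperlinearity directly. Recall that $G$ is hyperlinear exactly when it embeds into a metric ultraproduct $\prod_{k\to\omega}U(n_k)$, each $U(n_k)$ carrying the normalized Hilbert--Schmidt metric; fix an approximating sequence $\rho_k\colon G\to U(n_k)$ of asymptotic homomorphisms realizing such an embedding. Writing the equation as $w=w(x,y)$ with coefficients $g_1,\dots,g_r\in G$, it then suffices to prove: for $\omega$-almost every $k$ there are $X_k,Y_k\in U(n_k)$ with $w(X_k,Y_k)\to I$ along $\omega$, where the coefficients inside $w$ are read via $\rho_k$. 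Indeed the ultraproduct then contains an honest solution, and the subgroup generated by the image of $G$ together with the classes of $(X_k)_k$ and $(Y_k)_k$ is the desired overgroup.

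So the heart of the matter is the following finite-dimensional assertion: for all $A_1,\dots,A_r\in U(n)$ and all $\varepsilon>0$ there are $X,Y\in U(n)$ with $\|w(X,Y)-I\|_{\mathrm{HS}}<\varepsilon$, where $w(X,Y)$ is the corresponding word in $X,Y$ and the $A_i$. First I would normalize determinants: replacing $\rho_k$ by $\rho_k\oplus\overline{\rho_k}$ (still an asymptotic embedding) we may take all $\det A_i=1$, so that when the content of $w$ lies in $[F_2,F_2]$ the word map lands in $SU(n)$. Then I would split on the content. If the content already has non-trivial image in $F_2^{\mathrm{ab}}=\Z^2$, we are in the non-singular case and the assertion is classical (it is contained in the Kervaire--Laudenbach--Howie circle of results). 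The substantive case is content $\in[F_2,F_2]$ with non-trivial image $d\neq0$ in $[F_2,F_2]/[F_2,[F_2,F_2]]\cong\Z$, this group being generated by the class of $[x,y]$; here the hypothesis content $\notin[F_2,[F_2,F_2]]$ says precisely that the integer invariant $d$ is nonzero.

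For that case the plan is a degree argument for the word map $f\colon SU(n)\times SU(n)\to SU(n)$, $f(X,Y)=w(X,Y)$: one shows that for $n$ large its image contains a neighbourhood of $I$, so in particular $I$ lies in the image of $f$. When $d=\pm1$ this is Goto's theorem that every element of a compact connected semisimple Lie group is a single commutator; for general $d\neq0$ one bootstraps from $d=\pm1$, using that the power maps $Z\mapsto Z^d$ and the commutator map are surjective on $SU(n)$, while absorbing the contributions of the $[F_2,[F_2,F_2]]$-part of the content and of the (generally far from central) coefficients $A_i$ as higher-order perturbations that do not change the relevant homotopy invariant. For the ``moreover'' clause, take $G$ finite, realize it via a faithful unitary representation defined over $\overline{\mathbb{Q}}$, enlarge $n$ by a suitable direct multiple of $\rho$ so that the degree argument applies, obtain $X,Y$ with algebraic entries solving $w(X,Y)=I$, and reduce modulo a prime at which the finite group $\rho(G)$ still injects; all the data then lands in some $GL_n(\mathbb{F}_q)$, giving a finite overgroup.

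The main obstacle, and the technical core, is exactly this degree computation for the two-variable word map on $SU(n)$: one must verify that neither the $[F_2,[F_2,F_2]]$-part of the content nor the fixed coefficients $A_i$ destroy the non-degeneracy that the ``leading term'' $[x,y]^d$ provides, and this is where the hypothesis content $\notin[F_2,[F_2,F_2]]$ enters essentially. It is also the step that forces the use of the unitary groups $U(n_k)$ rather than an arbitrary family of finite approximating groups: the connectedness of $SU(n)$, the surjectivity of its commutator and power maps, and the compatibility of all this with the Hilbert--Schmidt geometry are precisely the features that make the estimates close.
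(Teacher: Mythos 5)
First, note that the paper does not prove Theorem \ref{ktteo} at all: it is quoted verbatim from Klyachko and Thom \cite{kt}, so the only meaningful comparison is with their argument. Your outer frame does match theirs: use hyperlinearity to embed $G$ in a metric ultraproduct of unitary groups, reduce to (approximately) solving the equation with unitary coefficients in finite dimensions, and observe that the hypothesis ``content $\notin [F_2,[F_2,F_2]]$'' amounts to a nonzero triple of invariants $(a,b,d)$ in the free nilpotent quotient of class $2$, with the abelianized-nontrivial case handled by classical results (though the correct citation there is Gerstenhaber--Rothaus, extended to hyperlinear groups by Pestov, not the Kervaire--Laudenbach--Howie results, which concern locally indicable coefficient groups). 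The finite-extension argument via algebraic entries, the Nullstellensatz and reduction modulo a prime into the locally finite group $GL_n$ over $\overline{\mathbb{F}_p}$ is also a sound way to get the ``moreover'' clause, \emph{given} an exact solution in a unitary group over $\overline{\mathbb{Q}}$.

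The genuine gap is the finite-dimensional core, which you assert rather than prove: that the word map with constants, whose content is $[x,y]^d$ modulo $[F_2,[F_2,F_2]]$ with $d\neq 0$, hits (a neighbourhood of) the identity in $\mathrm{SU}(n)$ for large $n$. That statement \emph{is} the main theorem of \cite{kt}, and the sketch offered for it does not work as stated. A map $\mathrm{SU}(n)\times \mathrm{SU}(n)\to \mathrm{SU}(n)$ has no mapping degree (the dimensions differ), so ``degree argument'' must mean some essentialness statement; but $\mathrm{SU}(n)$ is $2$-connected, and the naive homotopy invariants you appeal to vanish there. This is exactly why Klyachko and Thom work in $\mathrm{PU}(p)$ for a prime $p$ with $p\nmid d$: the $\mathbb{Z}/p$ fundamental group and the $p$-torsion in $H^*(\mathrm{PU}(p);\mathbb{Z}/p)$ are what detect the exponent $d$, and their cohomological computation showing that neither the fixed unitary coefficients $A_i$ nor the $[F_2,[F_2,F_2]]$-part can trivialize the map is the real content of the proof. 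Your claim that these contributions are ``higher-order perturbations'' has no meaning here: the $A_i$ are arbitrary unitaries interleaved into the word, not small corrections, and Goto's theorem (surjectivity of the plain commutator map) says nothing about words such as $a_1Xa_2Ya_3X^{-1}a_4Y^{-1}a_5$. Until that step is supplied--in practice, by the $\mathrm{PU}(p)$ cohomology argument of \cite{kt}--the proposal is an accurate road map of where the difficulty sits, not a proof.
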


Here $F_2$ denotes the free group generated by the two unknowns. In general we denote by $F_n$ the free group generated by the variables $x_1,\ldots,x_n$.

We use our methods to prove Kervaireness in many cases which are not covered by Theorem \ref{ktteo} and the previous results of Brick \cite{br2}.

\begin{lema} \label{capsula}
Let $\p=\langle x_1,x_2, \ldots, x_n | r\rangle$ be a one-relator presentation with $r\in [F_n, F_n]$. For each $k\in \occ(x_i,r)$, let $v_k$ denote the class of $s(k,r)\in F_n$ in $F_n^{ab}=\Z^n$. Let $P_i$ denote the convex hull of the set $\{v_k\}_{k\in \occ(x_i,r)}$. If for some $1\le i\le n$ there exists $k\in \occ(x_i,r)$ such that $v_k$ is a vertex of the polytope $P_i$ and the multiplicity of $v_k$ in $\{v_j\}_{j\in \occ(x_i,r)}$ is one, then $\p$ is DR, and therefore Kervaire.  
\end{lema}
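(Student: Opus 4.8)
The plan is to verify that $\p$ satisfies the I-test and then invoke Theorem \ref{maindr} together with Gersten's theorem that DR presentations are Kervaire \cite{Ger}. The crucial point is that the hypothesis $r\in[F_n,F_n]$ trivializes the orthogonality condition of Definition \ref{defiasph}: here $q(r)=0\in\Z^n$, so \emph{every} vector $v\in\R^n$ is orthogonal to $q(r)$ and may be used to build the weight matrix (in particular $G$ is indicable and the deficiency $n-1$ is non-negative, so the I-test is applicable). Since $m=1$, the weight matrix $M(v)$ consists of a single column whose $i$-th entry is the family $M_{i,1}=\{\overline{s(k,r)}\}_{k\in\occ(x_i,r)}=\{\langle v_k,v\rangle\}_{k\in\occ(x_i,r)}$. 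Therefore $M(v)$ is good precisely when some row $i$ has $M_{i,1}$ non-empty with its maximum attained with multiplicity one; taking the trivial column order $j_1=1$ and a row order beginning with such an $i$, conditions (1)--(3) of Definition \ref{defiasph} then hold at once.

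It remains to extract such a $v$ from the polytope hypothesis. Fix $i$ and $k\in\occ(x_i,r)$ as in the statement, so $v_k$ is a vertex of $P_i=\mathrm{conv}\{v_j\mid j\in\occ(x_i,r)\}$ and $v_j\neq v_k$ for all $j\in\occ(x_i,r)$ with $j\neq k$. Being an extreme point of the polytope $P_i$, $v_k$ is not a convex combination of the other $v_j$, so by the supporting-hyperplane theorem there is $v\in\R^n$ with $\langle v_k,v\rangle>\langle y,v\rangle$ for every $y\in P_i\smallsetminus\{v_k\}$; in particular $\langle v_k,v\rangle>\langle v_j,v\rangle$ whenever $v_j\neq v_k$. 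Since $v_k$ has multiplicity one among the $v_j$, the value $\langle v_k,v\rangle$ is the strict maximum of $M_{i,1}$ and occurs there exactly once. Hence $M(v)$ is good, $\p$ satisfies the I-test, Theorem \ref{maindr} shows $K_\p$ is DR, and a DR presentation is Kervaire.

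I do not anticipate a real obstacle. The two points that deserve a little care are the bookkeeping that, for a one-relator presentation, ``good'' collapses exactly to ``some row of $M$ has a strictly largest entry of multiplicity one'', and the elementary convexity fact that a vertex of a polytope is strictly separated from the remaining defining points by some linear functional; both are routine.
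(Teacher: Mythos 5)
Your proposal is correct and follows essentially the same route as the paper: the paper also picks a hyperplane $H$ with $H\cap P_i=\{v_k\}$ (the vertex is exposed), takes $v$ orthogonal to $H$, and concludes that $\p$ satisfies the I-test with respect to $v$ or $-v$, hence is DR and Kervaire. Your write-up simply makes explicit the bookkeeping that $q(r)=0$ trivializes the orthogonality condition and that, for a single relator, goodness of $M(v)$ means some row has a unique strict maximum.
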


\begin{proof}
Suppose $i$ and $k$ are in the hypothesis of the statement. Let $H$ by an hyperplane of $\R^n$ such that $H\cap P_i=\{v_k\}$. Let $v\in \R^n$ be a nonzero vector orthogonal to $H$. Then $\p$ satisfies the I-test with respect to $v$ or to $-v$.
\end{proof}

\begin{ej}
Let $\p=\langle x,y | [yx^{-1}, [x^2,y]] \rangle$. The generator $x$ occurs 10 times in the relator $yx^{-1}x^2yx^{-2}y^{-1}xy^{-1}yx^2y^{-1}x^{-2}$. The vectors $v_k$ for $k\in \occ(x,r)$ appear in Figure \ref{hull}

\begin{figure}[h] 
\begin{center}
\includegraphics[scale=1]{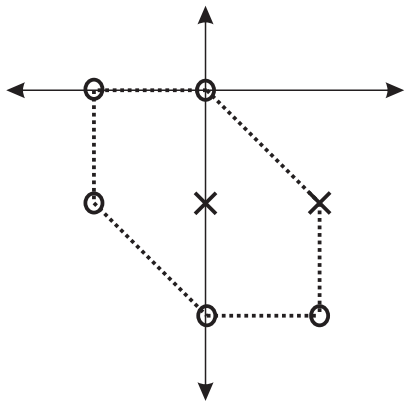}

\caption{The convex hull of the collection $\{v_k\}_{k\in \occ(x, r)}$.}\label{hull}
\end{center}
\end{figure}

The repeated vectors $(0,-1)$ and $(1,-1)$ are marked with a cross while the others appear with a circle. The vector $(-1,0)$ is a vertex of the convex hull and has multiplicity one. Therefore $\p$ is DR. 

\end{ej}

It is not difficult to see that Brick's result \cite[Proposition 3.1 (1)]{br2} can be derived from Lemma \ref{capsula}.

We now use the lemma to prove two results that cover many cases which are not contemplated in the results of Brick and Klyachko-Thom. Note that in both cases, the shapes of the equations involved are non-necessarily reduced.

\begin{prop}
Let $w_1, w_2, \ldots , w_n$ be nontrivial words in two variables $\{x,y\}$, each of them positive or negative, and let $w= [w_n,[w_{n-1}, \ldots , [w_2, [w_1, [x,y]]] \ldots ]]$. Then for any group $H$, any equation modeled by $w$ has a solution in an overgroup of $H$.
 
\end{prop}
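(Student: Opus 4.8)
The plan is to apply Lemma~\ref{capsula} to the one-relator presentation $\p=\langle x,y\mid w\rangle$, where $w=[w_n,[w_{n-1},\ldots,[w_1,[x,y]]\ldots]]$. By Gersten's theorem, it suffices to show that $K_\p$ is DR, and by the lemma this reduces to finding a generator ($x$ or $y$) and an occurrence $k$ of it in $w$ such that the abelianized prefix $v_k=\overline{s(k,w)}\in\Z^2$ is a vertex of the convex hull $P$ of all such vectors $\{v_j\}_{j\in\occ(x_i,w)}$ and attains that vertex with multiplicity one. Note first that $w\in[F_2,F_2]$ since it is an iterated commutator, so the lemma applies.

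The key computation is to understand, combinatorially, which vectors $v_k$ arise. I would track the total-exponent pair $(\exp(x,s(k,w)),\exp(y,s(k,w)))$ as $k$ ranges over the occurrences of (say) $x$. Since each $w_i$ is positive or negative, reading $w$ from the innermost commutator $[x,y]=xyx^{-1}y^{-1}$ outward, the partial abelianizations exhibit a controlled ``extremal'' behavior: the innermost $x$-occurrence in $[x,y]$ has prefix $s=xyx^{-1}y^{-1}\ldots$ with a specific exponent vector, and as we move outward through the $w_i$'s the prefixes pick up the abelianized contribution of the conjugating words $w_i^{\pm1}$. The strategy is to choose the letter — $x$ or $y$ — and the innermost relevant occurrence so that its prefix-vector sits strictly beyond all the others in some linear direction; concretely I expect the outermost commutator bracket $[w_n,-]$ to produce, at its first letter, a prefix whose exponent vector is extremal (it is simply $\overline{w_n}$, up to sign and a correction), and uniqueness of the multiplicity should follow because $w_n\neq 1$ forces this vector to be distinct from the cluster of smaller prefixes. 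One then exhibits a supporting hyperplane $H$ with $H\cap P=\{v_k\}$ and lets $v$ be a normal to $H$; the I-test holds for $v$ or $-v$, as in the proof of Lemma~\ref{capsula}.

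The main obstacle I anticipate is handling the degenerate cases in the choice of which variable and occurrence to use: if some $\overline{w_i}$ are parallel or if cancellations make several prefix-vectors coincide, a naive choice of the outermost occurrence may fail the multiplicity-one condition. I would address this by an induction on $n$, or by a careful case analysis according to the signs of $\exp(x,w_n)$ and $\exp(y,w_n)$ — at least one is nonzero since $w_n\neq1$ and $w_n$ is a positive or negative word — picking the variable whose exponent in $w_n$ is nonzero and arguing that the first occurrence of that variable inside the outermost $w_n$-factor yields a vertex of $P$ of multiplicity one. The positivity/negativity hypothesis on each $w_i$ is what keeps the exponent bookkeeping monotone and prevents the conjugating factors from ever ``coming back'' to a previously attained extremal vector, so the uniqueness is robust. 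Once the vertex and its multiplicity are pinned down, the rest is an immediate appeal to Lemma~\ref{capsula} and hence to Kervaireness.
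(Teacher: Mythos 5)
Your framing — reduce to Lemma \ref{capsula} (equivalently, to the I-test for a one-relator presentation, which is exactly what the paper does) and note that $w\in[F_2,F_2]$ — is fine, but the combinatorial heart of the argument is missing, and the concrete recipe you give for the extremal occurrence is false. Take $n=1$ and $w_1=x$, so $w=x\,(xyx^{-1}y^{-1})\,x^{-1}\,(yxy^{-1}x^{-1})$. Your rule picks the variable $x$ (the one with nonzero exponent in $w_1$) and its first occurrence in the outermost $w_1$-factor, i.e.\ the first letter of $w$. The vector attached to that occurrence is $q(s(1,w))=q(w)=(0,0)$, but $(0,0)$ is also the vector of the occurrence $k=6$ (the $x^{-1}$ coming from $w_1^{-1}$) and of $k=10$ (the last letter): it has multiplicity three in $\{q(s(k,w))\}_{k\in\occ(x,w)}=\{(0,0),(-1,0),(-1,-1),(0,0),(0,-1),(0,0)\}$, so the multiplicity-one condition of Lemma \ref{capsula} fails there. (Beware also that $s(k,w)$ is a \emph{suffix}, not a prefix, and that for a negative occurrence the letter itself is treated specially; if you count naive prefixes you get a different multiset from the one that actually governs the equations $E_{i,g}$, which can make a bad occurrence look good.) In this example the multiplicity-one vertex is at $k=4$, the $x^{-1}$ inside the innermost commutator $[x,y]$: in general the extremal occurrence lies deep inside the nested commutator, not in the outermost $w_n$-factor, so uniqueness cannot be extracted from ``$w_n\neq 1$'' alone, and the supporting-hyperplane direction cannot be read off from $w_n$.

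For comparison, the paper argues by induction on $n$ with the fixed vector $v=(-1,-1)$ and the strengthened invariant that the multiset $\{\overline{s(k,r)}\}_{k\in\occ(x,r)}$ has a \emph{positive} maximum of multiplicity one. Writing $r=w_nuw_n^{-1}u^{-1}$ with $u$ the inner iterated commutator, one records that each weight $a_i$ of $u$ reappears in $r$ twice, once shifted to $a_i-\overline{w}_n$ and once unshifted, while the occurrences inside $w_n^{\pm1}$ contribute a monotone family $b_1,\dots,b_l$ (each value twice); a two-case analysis ($w_n$ positive, where $-\overline{w}_n>b_l\ge 0$, versus $w_n$ negative, where all $b_i<0$) then shows the new unique positive maximum is $a_m-\overline{w}_n$, respectively $a_m$. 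Your sketch gestures at ``induction on $n$ or a case analysis on the signs'' but never sets up this inductive invariant or the bookkeeping of how the weight multiset transforms, and the one concrete claim it does make about where the good vertex sits is contradicted already by the simplest case.
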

\begin{proof}
We prove by induction that the presentation  $\p=\langle x,y | w \rangle $ satisfies the I-test with respect to the vector $v=(-1,-1)$. Moreover, we show that the collection $\{\overline{s(k,r)}\}_{k\in \occ(x, r)}$ has a positive maximum with multiplicity 1. For $n=0$, $x$ occurs twice in $r=xyx^{-1}y^{-1}$ and the collection is $\{0,1\}$ which satisfies our claim.

Assume now that $\mathcal{Q}=\langle x,y | u\rangle$ satisfies the claim for $u=[w_{n-1}, \ldots , [w_2, [w_1, [x,y]]] \ldots ]$. Concretely, the collection $\{\overline{s(k,u)}\}_{k\in \occ(x, u)}$ is an ordered list $a_1, a_2, \ldots , a_j$ where $j$ is the number of occurences of $x$ in $u$, and this list has a unique maximum, say $a_m$, and $a_m>0$. Then $x$ occurs $2j+2l$ times in $r=w_nuw_n^{-1}u^{-1}$ where $l$ is the number of occurences of $x$ in $w_n$. If $k\in \occ(x,w_n)$, then $k$ and $k'=2\len(w_n)+\len(w_n)-k+1$ are in $\occ(x,r)$. Moreover $\overline{s(k,r)}=\overline{s(k',r)}=\overline{s(k,w_n)}$ since the weight of $u$ is zero. If $k\in \occ(x,u)$, then $k'=\len(w_n)+k$ and $k''=2\len(w_n)+2\len(u)-k+1$ are in $\occ(x,r)$ and $\overline{s(k',r)}=\overline{s(k,u)}-\overline{w}_n$, while $\overline{s(k'',r)}=\overline{s(k,u)}$. In particular, the collection $\overline{s(k,r)}_{k\in \occ(x,r)}$ is an ordered list of the form $$b_1, b_2, \ldots, b_l, a_1-\overline{w}_n, a_2-\overline{w}_n, \ldots, a_j-\overline{w}_n, b_l, \ldots, b_2, b_1, a_j, \ldots , a_2, a_1.$$
In the case that $w_n$ is a positive word, the sequence $b_1, b_2, \ldots , b_l$ is strictly increasing, each $b_i$ is non-negative and $-\overline{w}_n> b_l$. In this case, $a_m-\overline{w}_n$ is the unique maximum of the list and it is positive.

In the case that $w_n$ is negative, $b_1, b_2, \ldots , b_l$ is strictly decreasing, each $b_i$ is negative and $-\overline{w}_n\le b_l$. In this case, $a_m$ is the unique maximum of the list and it is positive.
\end{proof}

If $w$ is a word in the alphabet $\{x,x^{-1}\}$ with  total exponent zero, then it is clear that there exists a cyclic permutation of $x$ which is a (non-negative) Dyck word, meaning that each proper initial segment of $w$ has non-negative total exponent. We will say that a word $w$ in $\{x,x^{-1}\}$ is a \textit{strong Dyck word} if each initial proper segment has positive total exponent (positive strong Dyck word) or if each initial proper segment has negative total exponent (negative strong Dyck word).  

If $w$ is a word in the alphabet $\{x,x^{-1},y,y^{-1}\}$, the \textit{$x$-shape} of $w$ is the word in $\{x,x^{-1}\}$ obtained by removing all the $y$ and $y^{-1}$ letters.

\begin{prop} \label{dyck}
Let $w\in [F_2,F_2]$ be a non-necessarily reduced word in two variables $x,y$. Suppose that $w$ has a cyclic permutation $w'=z_1^{l_1}z_2^{l_2}\ldots z_p^{l_p}$ (each $z_i\in \{x,y\}$, each $l_i$ a non-zero integer) with the following properties.

\noindent (a) The $x$-shape of $w'$ is a strong Dyck word.

\noindent (b) $z_1=z_{p-1}=x$, $z_p=y$.

Then any equation modeled by $w$ over any coefficient group $H$ has a solution in an overgroup of $H$.  
\end{prop}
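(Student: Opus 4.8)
The strategy is to show that the one-relator presentation $\langle x,y\mid w'\rangle$ is DR and then invoke Gersten's theorem (DR $\Rightarrow$ Kervaire, as recalled above), together with the elementary remark that cyclically permuting the shape of an equation does not affect its solvability over $H$: if an equation $W$ has shape $w=uv$, write $W=AB$ with $A$ of shape $u$ and $B$ of shape $v$; then $W=1$ iff $BA=1$ with the same values of the unknowns, and $BA$ has shape $vu=w'$. Since $w'\in[F_2,F_2]$, the tool for the DR claim will be Lemma~\ref{capsula}, applied with the generator $x$: it suffices to exhibit $k\in\occ(x,w')$ for which the abelianization $v_k\in\Z^2$ of $s(k,w')$ is a vertex, of multiplicity one, of the convex hull of $\{v_j\}_{j\in\occ(x,w')}$.

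First I would reduce to the case in which the $x$-shape of $w'$ is a \emph{positive} strong Dyck word. If it is negative, replace $w$ by the word $\iota(w)$ obtained from $w$ by inverting every letter $x^{\pm1}$ while leaving each $y^{\pm1}$ untouched; substituting $x\mapsto x^{-1}$ carries solutions of an equation modelled by $w$ to solutions of an equation modelled by $\iota(w)$ and back, so this reduction is harmless. Under $\iota$ the good cyclic permutation $w'=z_1^{l_1}z_2^{l_2}\cdots z_p^{l_p}$ of $w$ becomes $z_1^{-l_1}z_2^{l_2}z_3^{-l_3}\cdots$ (only the exponents of the $x$-blocks change sign, never the letters $z_i$), so conditions (a) and (b) persist, with the $x$-shape now positive strong Dyck.

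Next I would read off the weights. For $0\le j\le \len(w')$ let $\sigma_j$ be the total $x$-exponent of the length-$j$ prefix of $w'$, so $\sigma_0=\sigma_{\len(w')}=0$ because $w'\in[F_2,F_2]$. Since the letter in position $k$ is $x^{\epsilon_k}$ and $s(k,w')$ is the suffix of $w'$ starting at position $k$ (if $\epsilon_k=1$) or $k+1$ (if $\epsilon_k=-1$), the first coordinate of $v_k$ equals $-\min(\sigma_{k-1},\sigma_k)$, while its second coordinate is the corresponding total $y$-exponent. The hypothesis that the $x$-shape of $w'$ is a positive strong Dyck word translates exactly into: $\sigma_j=0$ for every $j$ that lies before the first or at/after the last occurrence of $x$ in $w'$, and $\sigma_j\ge1$ for every $j$ strictly between them. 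Now $z_1=x$ forces $l_1\ge1$, so the very first letter of $w'$ is $x$ and $v_1=\overline{w'}=(0,0)$; and $z_{p-1}=x$, $z_p=y$, together with the strong Dyck property, force the last occurrence $k_0$ of $x$ in $w'$ to be an $x^{-1}$ sitting immediately in front of the final block $z_p^{l_p}=y^{l_p}$, whence $v_{k_0}=(0,l_p)$ with $l_p\ne0$. For every other occurrence $k$ of $x$, the index $k$ is neither the first letter of $w'$ nor the last occurrence of $x$, so both $\sigma_{k-1}\ge1$ and $\sigma_k\ge1$, and therefore the first coordinate of $v_k$ is $\le-1$.

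Consequently, among the $v_j$ exactly two, namely $(0,0)$ and $(0,l_p)$, have first coordinate $0$, each occurring with multiplicity one, while all the remaining ones lie in the half-plane where the first coordinate is $\le-1$. Hence the one of $(0,0)$, $(0,l_p)$ with the more extreme second coordinate --- that is $(0,l_p)$ if $l_p>0$ and $(0,0)$ if $l_p<0$ --- is a vertex of $\mathrm{conv}\{v_j\}$ of multiplicity one, so Lemma~\ref{capsula} applies and $\langle x,y\mid w'\rangle$ is DR, hence Kervaire. Undoing the cyclic permutation (and the possible $\iota$) then yields the proposition. I expect the only delicate point to be the precise matching between the combinatorial ``strong Dyck'' hypothesis and the inequalities $\sigma_j\ge1$ at the two boundary occurrences of $x$ in $w'$; the rest is routine bookkeeping with the weights $\overline{s(k,w')}$.
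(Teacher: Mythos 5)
Your proposal is correct and follows essentially the same route as the paper: reduce to $w'=w$ and to a positive strong Dyck $x$-shape via $x\mapsto x^{-1}$, then apply Lemma \ref{capsula} to the generator $x$ after observing that $v_1=(0,0)$ and $v_{k_0}=(0,l_p)$ are the only points with vanishing first coordinate while all other $v_k$ have first coordinate $\le -1$. Your write-up merely makes explicit a few steps the paper leaves implicit (the harmlessness of cyclic permutation and of the substitution $x\mapsto x^{-1}$, and that the last $x$-occurrence is an $x^{-1}$), and these justifications are all sound.
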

\begin{proof}
We can assume that $w'=w$. We can also assume that the $x$-shape of $w$ is a positive strong Dyck word by replacing $x$ by $x^{-1}$ if necessary. Then the class $v_1$ of $s(1,w)=w\in F_2$ in $\Z^2$ is $(0,0)$. Assume that $k\in \occ(x,w)$ is the last occurence of $x$ in $w$. Then the class $v_k$ of $s(k,w)$ is $(0,l_p)\in \Z^2$. For any other occurence $k'\neq 1,k$ of $x$ in $w$, the first coordinate of $v_{k'}\in \Z^2$ is negative by hypothesis. Then $v_1$ (and $v_k$) is a vertex of the polytope $P_1$ with multiplicity one in $\{v_j\}_{j\in \occ(x,w)}$. By Lemma \ref{capsula}, $\langle x,y | w \rangle$ is DR.
\end{proof}

\begin{ej}
The presentation $\p=\langle x,y | [y^{-1}x,[x^{-1},y^{-2}x]]\rangle$ is Kervaire. We prove that $w=[y^{-1}x,[x^{-1},y^{-2}x]]$ satisfies the hypothesis of Proposition \ref{dyck}. The $x$-shape of $w=y^{-1}xx^{-1}y^{-2}x^2x^{-1}y^2x^{-1}yy^{-2}xx^{-2}y^2x$ is not a strong Dyck word. However, the $x$-shape of the cyclic permutation $w'=xy^{-1}xx^{-1}y^{-2}x^2x^{-1}y^2x^{-1}yy^{-2}xx^{-2}y^2$ of $w$ is $x^2x^{-1}x^2x^{-2}xx^{-2}$ which is a positive strong Dyck word. Moreover, following the notation of Proposition \ref{dyck} we have $p=14$, $z_1=z_{13}=x$ and $z_{14}=y$, so the result applies.  
\end{ej}


\begin{thebibliography}{99}

\bibitem{ad} J.F. Adams. \textit{A new proof of a theorem of W. H. Cockcroft}. J. London Math. Soc. 30 (1955), 482-488.

\bibitem{blw} A. Bartels, W.  L\"uck, S. Weinberger. \textit{On hyperbolic groups with spheres as boundary}. J. Differential Geometry 86 (2010), 1-16.

\bibitem{bo} W. Bogley. \textit{J.H.C. Whitehead's asphericity question}, in Two-dimensional homotopy and combinatorial group theory. London Mathematical Society Lecture Note Series 197, Cambridge University Press (1993).

\bibitem{br} S. Brick. \textit{Normal-convexity and equations over groups}. Invent. Math. 94 (1988), no. 1, 81-104. 

\bibitem{br2} S. Brick. \textit{A note on coverings and Kervaire complexes}. Bull. Austral. Math. Soc. 46 (1992), 1-21.

\bibitem{cch} I. Chiswell, D. Collins, J. Huebschmann. \textit{Aspherical group presentations}. Math. Z. 178 (1981), no. 1, 1-36. 

\bibitem{CR} A. Clay, D. Rolfsen. \textit{Ordered groups and topology}. Preprint (2015) Available at \url{http://arxiv.org/abs/1511.05088}

\bibitem{co} W.H. Cockcroft. \textit{On two-dimensional aspherical complexes}. Proc. London Math. Soc. (3) 4, (1954). 375-384.

\bibitem{CT} J.M. Corson, B. Trace \textit{Diagrammatically reducible complexes and Haken manifolds}. J. Austral. Math. Soc. (Series A) 69(2000), 116-126.



\bibitem{Ger} S.M. Gersten. \textit{Reducible diagrams and equations over groups}. Essays in group theory, Math. Sci. Res. Ins. Publ. 8 (Springer Verlag), 1987.

\bibitem{Ger2} S.M. Gersten. \textit{Branched coverings of 2-complexes and diagrammatic reducibility}. Trans. Amer. Math. Soc. 303(1987), 689-706.

\bibitem{Ger3} S.M. Gersten. \textit{Some remarks on subgroups of hyperbolic groups}. Preprint (1999)  Available at \url{http://www.math.utah.edu/~sg/Papers/sgs.pdf}


\bibitem{Ger4} S.M. Gersten. \textit{Asphericity for certain groups of cohomological dimension 2}. Preprint (2015) Available at \url{http://arxiv.org/abs/1501.06875}

\bibitem{gr} M. Gromov. \textit{Hyperbolic groups}. Essays in group theory, 75-263, 
Math. Sci. Res. Inst. Publ., 8, Springer, New York, 1987. 


\bibitem{Ho1} J. Howie. \textit{On pairs of 2-complexes and systems of equations over groups}. J. Reine Angew. Math. 324 (1981), 165-174.

\bibitem{Ho2} J. Howie. \textit{On locally indicable groups}. Math. Z. 180(1982), 445-451. 

\bibitem{Ho3} J. Howie. \textit{Some remarks on a problem of J. H. C. Whitehead}. Topology 22 (1983), no. 4, 475-485. 

\bibitem{Hol3} J. Howie \textit{The solution of length three equations over groups}. Proceedings of the Edinburgh Mathematical Society (1983) 26, 89-96.

\bibitem{Ho4} J. Howie. \textit{On the asphericity of ribbon disc complements}. Trans. Amer. Math. Soc. 289(1985), 281-302.

\bibitem{Ho5} J. Howie. \textit{Minimal Seifert manifolds for higher ribbon knots}. The Epstein birthday schrift, 261-293 (electronic), Geom. Topol. Monogr., 1, Geom. Topol. Publ., Coventry, 1998. 

\bibitem{HR} G. Huck and S. Rosebrock. \textit{Weight tests and hyperbolic groups}. In J. Howie,
A. Duncan and N. Gilbert (eds.) Combinatorial and Geometric Group Theory,
Vol. 204 of London Math. Soc. Lecture Note Ser. (Cambridge University Press,
London, 1995) pp. 174-183.

\bibitem{Iva} S.V. Ivanov. \textit{An asphericity conjecture and Kaplansky problem on zero divisors}. J. Algebra 216(1999), 13-19.

\bibitem{kt} A. Klyachko, A. Thom. \textit{New topological methods to solve equations over groups}. arXiv:1509.01376

\bibitem{kr} S. Krsti\'c. \textit{Systems of equations over locally p-indicable groups}. Invent. Math. 81 (1985), no. 2, 373-378.

\bibitem{lu} W. L\"uck. \textit{Aspherical manifolds}. Bull. Manifold Atlas (2012) 1-17.

\bibitem{ls} R. Lyndon, P. Schupp. \textit{Combinatorial group theory}. Ergebnisse der Mathematik und ihrer Grenzgebiete, Band 89. Springer-Verlag, Berlin-New York, 1977. xiv+339 pp.

\bibitem{pr} S. Pride. \textit{Star-complexes, and the dependence problems for hyperbolic complexes}. Glasgow Math. J. 30, (1988) 155-170.

\bibitem{Rou} C.P. Rourke. \textit{On dunce hats and the Kervaire conjecture}. Papers presented to Christopher Zeeman, University of Warwick (1988), 221-230.

\bibitem{Si1} A.J. Sieradski. \textit{A coloring test for asphericity}. Quart. J. Math. Oxford (2) 34(1983), 97-106.

\bibitem{Si2} A.J. Sieradski. \textit{Algebraic topology for two dimensional complexes}, in Two-dimensional homotopy and combinatorial group theory. London Mathematical Society Lecture Note Series 197, Cambridge University Press (1993).

\bibitem{st} M. Steenbock. \textit{Rips-Segev torsion-free groups without the unique product property}. J. Algebra 438 (2015), 337-378

\bibitem{wh} J.H.C. Whitehead. \textit{On adding relations to homotopy groups}. Ann. of Math. (2) 42, (1941). 409-428.


\end{thebibliography}
\end{document}